\newtheorem{LetterTheorem}{Theorem}
\newtheorem{Theorem}{Theorem}[section]
\newtheorem{Proposition}[Theorem]{Proposition}
\newtheorem{Lemma}[Theorem]{Lemma}
\newtheorem{Corollary}[Theorem]{Corollary}
\theoremstyle{definition}
\newtheorem{Remark}[Theorem]{Remark}
\newtheorem{Definition}[Theorem]{Definition}
\tikzset{close/.style={near start,outer sep=-2pt}}
\newcommand{\rmod}{\operatorname{mod}}
\newcommand{\modleft}[1]{#1\text{-mod}}
\newcommand{\projleft}[1]{#1\text{-proj}}
\newcommand{\Hp}[1]{\mathcal{H}(\mathrm{proj}\, #1 )}
\newcommand{\Hpleft}[1]{\mathcal{H}(#1\text{-proj})}
\newcommand{\HP}[1]{\mathcal{H}_{\mathcal{P}}\mathrm{(proj}\, #1 )}
\newcommand{\Hstp}[1]{\mathcal{H}_\mathrm{stp}\mathrm{(proj}\, #1 )}
\newcommand{\stmod}{\operatorname{\underline{mod}}}
\newcommand{\proj}{\operatorname{proj}}
\newcommand{\inj}{\operatorname{inj}}
\newcommand{\stp}{\operatorname{stp}}
\newcommand{\CC}[1]{\mathcal{C}(\rmod #1 )}
\newcommand{\CCp}[1]{\mathcal{C}(\proj #1 )}
\newcommand{\CCi}[1]{\mathcal{C}(\inj #1 )}
\newcommand{\CCleft}[1]{\mathcal{C}(#1\text{-mod})}
\newcommand{\CCpleft}[1]{\mathcal{C}(#1\text{-proj})}
\newcommand{\DDb}[1]{\mathcal{D}^b(\rmod #1)}
\newcommand{\DDpl}[1]{\mathcal{D}^+(\rmod #1 )}
\newcommand{\DDm}[1]{\mathcal{D}^-(\rmod #1 )}
\newcommand{\KK}[1]{\mathcal{K}(\rmod #1 )}
\newcommand{\KKleft}[1]{\mathcal{K}(#1\text{-mod})}
\newcommand{\KKp}[1]{\mathcal{K}(\proj #1 )}
\newcommand{\KKpleft}[1]{\mathcal{K}(#1\text{-proj})}
\newcommand{\KKb}[1]{\mathcal{K}^b(\rmod #1 )}
\newcommand{\KKbp}[1]{\mathcal{K}^b(\proj #1 )}
\newcommand{\KKbi}[1]{\mathcal{K}^b(\inj #1 )}
\newcommand{\KKbileft}[1]{\mathcal{K}^b(#1\text{-inj})}
\newcommand{\KKm}[1]{\mathcal{K}^-(\rmod #1 )}
\newcommand{\KKpl}[1]{\mathcal{K}^+(\rmod #1 )}
\newcommand{\KKpp}[1]{\mathcal{K}^+(\proj #1 )}
\newcommand{\KKi}[1]{\mathcal{K}(\inj #1 )}
\newcommand{\KKpi}[1]{\mathcal{K}^+(\inj #1 )}
\newcommand{\KKpbsp}[1]{\mathcal{K}^{+,b^\ast}\!(\proj #1 )}
\newcommand{\KKpbspleft}[1]{\mathcal{K}^{+,b^\ast}\!(#1\text{-proj})}
\newcommand{\KKpbi}[1]{\mathcal{K}^{+,b}(\inj #1 )}
\newcommand{\KKmbp}[1]{\mathcal{K}^{-,b}(\proj #1 )}
\newcommand{\KKptac}[1]{\mathcal{K}_{\mathrm{tac}}(\proj #1)}
\newcommand{\KP}[1]{\mathcal{K}^b(\PP_{#1})}
\newcommand{\Kstp}[1]{\mathcal{K}^b(\stp #1)}
\newcommand{\KPperp}[1]{{}^\perp \mathcal{K}^b(\mathcal{\PP}_{#1})}
\newcommand{\Kstperp}[1]{{}^\perp \mathcal{K}^b(\stp #1)}
\newcommand{\Pperp}{\!{}^\perp \mathcal{P}\!}
\newcommand{\stperp}[1]{{}^\perp (\stp #1)}
\newcommand{\HH}{\mathrm{H}}
\newcommand{\LL}{\mathcal{L}}
\newcommand{\TT}{\mathcal{T}}
\newcommand{\FF}{\mathcal{F}}
\newcommand{\GG}{\mathcal{G}}
\newcommand{\ZZ}{\mathbb{Z}}
\newcommand{\PP}{\mathcal{P}\!}
\newcommand{\tleq}{\tau_{\leqslant 0}\,}
\newcommand{\id}{\mathrm{id}}
\newcommand{\Du}{\operatorname{D}}
\newcommand{\Tr}{\operatorname{Tr}}
\newcommand{\im}{\operatorname{Im}}
\newcommand{\Ker}{\operatorname{Ker}}
\newcommand{\Cok}{\operatorname{Cok}}
\newcommand{\gdim}{\operatorname{gldim}}
\newcommand{\ddim}{\operatorname{domdim}}
\newcommand{\nddim}{\nu\operatorname{-domdim}}
\newcommand{\Hom}{\operatorname{Hom}}
\newcommand{\rad}{\operatorname{rad}}
\newcommand{\soc}{\operatorname{soc}}
\newcommand*{\bt}{}
\DeclareRobustCommand*{\bt}{%
  {\mathbin{\mathpalette\bt@{}}}%
}
\newcommand*{\bt@scalefactor}{.75}
\newcommand*{\bt@widthfactor}{1.4}
\newcommand*{\bt@}[2]{%
  \sbox0{$#1\vcenter{}$}
  \sbox2{$#1\cdot\m@th$}%
  \hbox to \bt@widthfactor\wd2{%
    \hfil
    \raise\ht0\hbox{%
      \scalebox{\bt@scalefactor}{%
        \lower\ht0\hbox{$#1\bullet\m@th$}%
      }%
    }%
    \hfil
  }\hspace{.07em}
}
\newcommand{\enger}{\setlength{\arraycolsep}{.8pt}
                           \renewcommand{\arraystretch}{0.7} }
\newcommand{\matze}[2]{\enger{
\left(
\begin{array}{c}
\scriptstyle #1 \\
\scriptstyle #2 \\
\end{array}
\right)
}}
\newcommand{\matez}[2]{\enger{
\left(
\begin{array}{rr}
\scriptstyle #1\; & \; \scriptstyle #2\; \\
\end{array}
\right)
}}
\title{A triangulated hull and a Nakayama closure of the stable module category inside the homotopy category}
\subjclass[2020]{16G10, 16E05, 18G65, 18G80}
\keywords{stable module category, homotopy category, triangulated category, stable Grothendieck group, stable equivalence of Morita type}
\author{Sebastian Nitsche}
\address{Sebastian Nitsche, Institute of Algebra and Number Theory, University of Stuttgart,\; Pfaffenwaldring 57, 70569 Stuttgart, Germany}
\email{sebastian.nitsche@mathematik.uni-stuttgart.de}
\begin{document}

\begin{abstract}
The stable module category has been realized as a subcategory of the unbounded homotopy category of 
projective modules by Kato. We construct the triangulated hull of this subcategory inside the homotopy category.
This can also be used to characterize self-injective algebras.
Moreover, we extend this construction to a subcategory closed under an induced Nakayama functor.
Both of these categories are shown to be preserved by stable equivalences of Morita type.
As an application, we study the Grothendieck group of this triangulated hull and compare it with the
stable Grothendieck group.
\end{abstract}

\maketitle

\section*{Introduction}
When studying the representation theory of finite dimensional algebras,
one often considers three associated categories.
The abelian category of finitely generated modules, the derived module category which is a triangulated category
and the stable module category.
Over a self-injective algebra, the stable module category also has a triangulated structure.
However, in general, the stable module category is neither abelian nor triangulated.

In \cite{Kato_Kernels}, Kato gives an equivalence $\stmod A \to \LL_A$ 
between the stable module category and a full subcategory $\LL_A$
of the unbounded homotopy category of projective modules $\KKp A$.
This category $\LL_A$ can be enlarged to a triangulated category inside $\KKp A$.
We define two triangulated subcategories of $\KKp A$ via orthogonality conditions 
such that they both have $\LL_A$ as a subcategory; cf.\ \cref{Def:Category_H}.
The category $\HP A$ has objects which do not have non-zero morphisms to bounded complexes of projective-injective modules.
The category $\Hstp A$ has objects which do not have non-zero morphisms to bounded complexes of strongly projective-injective modules.
In summary, we will obtain the following chain of subcategories. 
With the exception of $\stmod A \simeq \LL_A$, all of these are triangulated categories.
\[
\begin{tikzcd}[column sep = .7cm]
\LL_A \ar[r, hookrightarrow] & \HP A \ar[r, hookrightarrow] & \Hstp A \ar[r, hookrightarrow] & \KKp A \\
\stmod A \ar[u, "\sim" sloped]
\end{tikzcd}
\]
While $\Hstp A$ is a larger category than $\HP A$, it is closed under a functor $\nu_{\mathcal{K}}$ which is
induced by the Nakayama functor $\nu_A$; cf.\ \cref{Def:nu_K}.
If $A$ has finite global dimension, 
$\nu_{\mathcal{K}}$ is equivalent to the derived Nakayama functor between the bounded homotopy categories 
$\KKbp A \to \KKbp A$. Our result is as follows.

\begin{LetterTheorem}[\cref{Thm:Triangulated-Hull-of-L}, \cref{Thm:Triangulated-Hull-of-L_Closed-under-nu}] 
\label{IntThm:TriangulatedCat}
Let $A$ be a finite dimensional algebra.
\begin{itemize}
\item[(1)] The category $\HP A$ is the smallest triangulated subcategory of $\KKp{A}$ that contains $\LL_A$
           and is closed under isomorphisms.

\item[(2)] The category $\Hstp A$ is the smallest triangulated subcategory of 
           $\KKp{A}$ that contains $\LL_A$ and is closed under $\nu_{\mathcal{K}}$ and under isomorphisms.
\end{itemize}
\end{LetterTheorem}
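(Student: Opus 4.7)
For both parts, denote by $\TT$ the purported triangulated hull. The containment $\TT \subseteq \HP A$ (resp.\ $\TT \subseteq \Hstp A$) is the routine direction: both $\HP A$ and $\Hstp A$ are defined as left orthogonals to a class of objects in $\KKp A$, and hence are automatically triangulated subcategories of $\KKp A$ closed under isomorphisms. Closure of $\Hstp A$ under $\nu_{\mathcal{K}}$ follows by adjunction from the fact that $\nu_{\mathcal{K}}$ restricts to a self-equivalence of the bounded homotopy category of strongly projective-injective modules. Finally, the inclusion $\LL_A \subseteq \HP A$ (which also gives $\LL_A \subseteq \Hstp A$) requires showing that any chain map from some $L_M \in \LL_A$ to a bounded complex of projective-injectives is null-homotopic; I would prove this by a lifting argument exploiting the injectivity of the components of the target together with the explicit description of $L_M$ via a projective resolution of $M$.

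The reverse direction is the substance. Given $X \in \HP A$, the plan is to realize $X$ as an iterated extension of objects of $\LL_A$ inside $\KKp A$. The central technique is stupid truncation: the termwise-split short exact sequence $0 \to \sigma_{>n} X \to X \to \sigma_{\leq n} X \to 0$ induces a distinguished triangle in $\KKp A$. For $n$ sufficiently small, the bounded-above tail $\sigma_{\leq n} X$ can be identified, via Kato's description, with (a shift of) an object of $\LL_A$ and thus already lies in $\TT$. This reduces the problem to the bounded complement $\sigma_{>n} X \in \HP A \cap \KKbp A$. Any bounded complex of projectives is built inductively by mapping cones from projective stalks, and the orthogonality condition defining $\HP A$ restricts which stalks may appear; these restricted stalks can be shown to lie (up to shift) in $\LL_A$, allowing an inductive conclusion on the width of the bounded part.

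For part (2), the same strategy applies with $\HP A$ replaced by $\Hstp A$ and the additional use of $\nu_{\mathcal{K}}$-closure. The weaker orthogonality condition permits $\sigma_{>n} X$ to contain stalks of projectives $P$ whose $\nu$-image has projective-injective summands that are not strongly projective-injective; such stalks are not directly in $\LL_A$ but are reached via $\nu_{\mathcal{K}}$-translates of $\LL_A$-objects. The principal obstacle in both parts is this last step: the precise identification of which bounded complexes of projectives satisfy the respective orthogonality condition, and their explicit constructive reduction to $\LL_A$-objects (and, in part (2), their $\nu_{\mathcal{K}}$-translates). It is here that the detailed combinatorics of projective-injective summands and their behaviour under the Nakayama functor, developed elsewhere in the paper, must enter decisively.
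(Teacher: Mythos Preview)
Your easy direction is essentially correct. The reverse direction, however, has a genuine gap: the stupid-truncation strategy cannot work as stated. Objects of $\HP A$ are in general unbounded on \emph{both} sides, so a single truncation never yields a bounded complement; your claim $\sigma_{>n} X \in \KKbp A$ fails as soon as $\gdim A = \infty$. Moreover, for $n$ small the tail $\sigma_{\leq n} X$ is, up to shift, a projective resolution $P^{\leqslant 0}$ of the module $M = \Cok d_X^{n-1}$, and such a resolution is \emph{not} in $\LL_A$: one computes $\HH_0((P^{\leqslant 0})^\ast) \simeq M^\ast$, generally nonzero, violating the condition $\HH_{\geqslant 0}((-)^\ast) = 0$. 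For the same reason a nonzero projective stalk $P[k]$ never lies in $\LL_A$, so your final reduction to stalks cannot conclude either. The orthogonality defining $\HP A$ constrains the \emph{cohomology} of $X$, not the projective terms it is built from.

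The paper's route is different in kind. One iteratively forms the cone of a map $P_k^\bt[-k] \to F^\bt$, where $P_k^\bt$ is a projective resolution of $\HH^k(F^\bt)$ at the leftmost nonvanishing degree $k$; this kills $\HH^k$ while leaving $\tau_{\geqslant k}F^\bt$, and hence the $(-)^\ast$-homology bound, unchanged. After finitely many such steps the resulting complex lies in a shift of $\LL_A$. It then remains to show each $P_k^\bt$ itself lies in $\TT$, and here the orthogonality enters: it forces $\HH^k(F^\bt) \in \Pperp_A$, so a composition-series argument reduces to the case of a simple $S \in \Pperp_A$. For such $S$ with injective hull $I$ one exhibits a distinguished triangle $P_S^{\leqslant 0} \to F_I^\bt \to F_C^\bt$ with both $F_I^\bt$ and $F_C^\bt$ in $\LL_A$; the hypothesis $S \in \Pperp_A$ is exactly what guarantees $I \notin \proj A$, so that $F_I^\bt \neq 0$ and the triangle is nondegenerate. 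Part~(2) follows the same skeleton, with this last step replaced by an iteration of $\nu^{-1}$ that terminates precisely because the relevant injective hull fails to be strongly projective-injective.
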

As an application of the first result, we discuss the Grothendieck group of the triangulated category $\HP A$.
We define an alternative Grothendieck group $\mathrm{G}_0^\PP(A)$ of $\stmod A$ via perfect exact sequences 
instead of short exact sequences; cf.\ \cref{Def:Grothendieck_PerfSeq}.
Thereby, a short exact sequence is said to be perfect exact if the induced sequence under the functor 
$\Hom_A(-,A)$ is exact as well. 
\begin{LetterTheorem}[\cref{Thm:GrothendieckGroup_stmod-HP}]
The equivalence $\stmod A \to \LL_A$ induces an isomorphism 
\[
G_0^\PP(A) \simeq G_0(\HP A).
\]
\end{LetterTheorem}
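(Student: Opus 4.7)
The plan is to construct mutually inverse group homomorphisms between $G_0^\PP(A)$ and $G_0(\HP A)$ using Kato's equivalence $F \colon \stmod A \to \LL_A$.

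First I would define $\alpha \colon G_0^\PP(A) \to G_0(\HP A)$ on generators by $[M] \mapsto [F(M)]$. For well-definedness, given a perfect exact sequence $0 \to X \to Y \to Z \to 0$, I would show that it lifts under $F$ to a short exact sequence of complexes of projectives $0 \to F(X) \to F(Y) \to F(Z) \to 0$ that is split in each degree. The perfectness condition is precisely what guarantees that the horseshoe lemma applies on both sides of Kato's two-sided construction: the ordinary projective resolution on the left, and the $\Hom_A(-,A)$-controlled extension on the right. A degreewise split short exact sequence of complexes produces a distinguished triangle in $\KKp A$, which lies in $\HP A$, giving the required relation $[F(Y)] = [F(X)] + [F(Z)]$.

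For the inverse, by \cref{Thm:Triangulated-Hull-of-L} the classes $\{[L] : L \in \LL_A\}$ generate $G_0(\HP A)$, and I would define $\beta([L]) = [F^{-1}(L)]$ on these generators. Well-definedness requires that every distinguished triangle $L_1 \to L_2 \to L_3 \to L_1[1]$ in $\HP A$ with all three $L_i \in \LL_A$ yields a relation already present in $G_0^\PP(A)$. Since $F$ is an equivalence, the morphism $L_1 \to L_2$ transports to a stable map $M_1 \to M_2$, which after adding a suitable projective summand $P$ to $M_1$ can be represented by an injective module homomorphism; its cokernel $C$ should satisfy $F(C) \simeq L_3$ in $\HP A$, and the resulting short exact sequence $0 \to M_1 \oplus P \to M_2 \to C \to 0$ is perfect exact because the lifted complex sequence is split in each degree.

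The main obstacle is showing that every relation in $G_0(\HP A)$ can be rewritten as a sum of relations coming from triangles whose three terms already lie in $\LL_A$. In a general triangulated category the relations from arbitrary triangles can be strictly stronger than those from triangles in a generating subcategory; the way around this is to argue by induction on the number of cone operations needed to reach an object of $\HP A$ from $\LL_A$, which is available thanks to the explicit description of $\HP A$ as the iterated cone closure of $\LL_A$ from \cref{Thm:Triangulated-Hull-of-L}. Once this reduction is in place, both compositions $\alpha \circ \beta$ and $\beta \circ \alpha$ restrict to the identity on the generating classes by construction, and one concludes.
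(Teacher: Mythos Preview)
Your forward map $\alpha$ is exactly the paper's map $\sigma$, and its well-definedness via \cref{Prop:PerfSeq-DistTriang} is correct. The gap is in the construction of the inverse.

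To define a homomorphism \emph{out of} $G_0(\HP A)$ you must assign a value to every object and verify the relation for \emph{every} distinguished triangle in $\HP A$, not only those with all three vertices in $\LL_A$. Knowing that $\LL_A$ generates $\HP A$ as a triangulated category tells you $\alpha$ is surjective; it says nothing about the relations. Your proposed induction on cone complexity does not close this: to run it you would have to assign to each object a canonical expression as a $\mathbb Z$-combination of $\LL_A$-classes and show that for any triangle the alternating sum of these expressions vanishes in $G_0^\PP(A)$. But such an expression is neither unique nor functorial, so two decompositions of the same object need not agree a priori in $G_0^\PP(A)$ --- which is precisely the well-definedness you are trying to establish. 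The induction has no invariant to induct on.

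The paper confronts this head-on rather than circumventing it. It writes down an explicit function $\tilde\sigma'\colon \HP A \to G_0^\PP(A)$, defined for an arbitrary $G^\bt$ by a formula in terms of the injective hulls of the composition factors of the cohomology modules $\HH^k(G^\bt)$ together with a boundary term $[\HH^r(\tau_{\leqslant r}G^\bt)]$; it proves this is independent of the chosen bounds (Claim~3), and then checks directly that $\tilde\sigma'$ sends an arbitrary triangle to zero (Claim~4). That last step is the technical heart: it combines the long exact cohomology sequence with a delicate verification that a certain short exact sequence at the truncation boundary is perfect exact. Only after $\tilde\sigma'$ descends to $G_0(\HP A)$ can one compare with $\sigma$ and conclude. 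Your outline would become a valid proof if you replaced the induction paragraph by such an explicit global formula and the corresponding triangle check.

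A minor point: in your description of triangles with vertices in $\LL_A$, the projective summand in \cref{Prop:PerfSeq-DistTriang} is added to the \emph{middle} term $Y$, not to the source $M_1$.
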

In contrast to the known stable Grothendieck group, $\mathrm{G}_0^\PP(A)$ can be non-zero even for algebras 
of finite global dimension. We additionally see, that $\mathrm{G}_0^\PP(A)$ is preserved by 
stable equivalences that preserve perfect exact sequences up to projective direct summands; 
cf.\ \cref{Thm:InducedIsoOnGrothendieckGroup}.

Regarding the second result of \cref{IntThm:TriangulatedCat}, a result by Fang, Hu and Koenig found in 
\cite[Theorem 4.3]{FangHuKoenig_DerivedEquiv} implies that $\Hstp A$ is a characteristic subcategory of 
$\KKbp A$ if $A$ has finite global dimension and 
$\nu$-dominant dimension at least $1$; cf.\ \cref{Cor:Kstp-characteristic_finite-gdim}. 
We also provide a variation of this consequence for algebras of arbitrary global dimension; cf.\ \cref{Thm:Kstp-characteristic_in_H}.

Additionally, we show that a stable equivalence of Morita type preserves the categories $\HP A$ and $\Hstp A$.
\begin{LetterTheorem}[\cref{Thm:InducedEquivalenceOnLAndH}] \label{IntThm:InvariantStEquivMoritaType}
Suppose ${}_A M_B$ and ${}_B N_A$ are bimodules that induce a stable equivalence of Morita type 
such that $M$ and $N$ do not have any non-zero projective bimodule as direct summand.
\begin{itemize}
\item[(1)] Applying $-\otimes_A M$ componentwise induces an equivalence of categories $\LL_A \rightarrow \LL_B$.

           \noindent
           If $A$ and $B$ are self-injective, this is an equivalence of triangulated categories.
           
\item[(2)] Applying $-\otimes_A M$ componentwise induces an equivalence of triangulated categories 
           \[ \HP A \rightarrow \HP B. \]

\item[(3)] Applying $-\otimes_A M$ componentwise induces an equivalence of triangulated categories 
           \[ \Hstp A \rightarrow \Hstp B. \]
\end{itemize}
\end{LetterTheorem}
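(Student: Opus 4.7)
The plan is to deduce all three parts from the minimality statements in \cref{IntThm:TriangulatedCat}, reducing the problem to a calculation on the generating subcategory $\LL_A$. Set $F := -\otimes_A M$ and $G := -\otimes_B N$, applied componentwise. Because $M$ is projective as a right $B$-module, $F$ is exact and preserves projectives degreewise, giving a triangulated functor $\KKp A \to \KKp B$; analogously for $G$. The bimodule isomorphism $M \otimes_B N \cong A \oplus P$ with $P$ a projective $A$-bimodule produces a canonical natural transformation $\eta \colon \id_{\KKp A} \to G \circ F$ whose cone at each object is the summand $-\otimes_A P$; one obtains a dual natural transformation $\varepsilon \colon \id_{\KKp B} \to F \circ G$ on the other side.

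\emph{Part (1).} Kato's equivalence $\stmod A \xrightarrow{\sim} \LL_A$ associates to a module $X$ a canonical complex of projectives built from a projective resolution together with additional data in negative degrees. Since $F$ is exact and preserves projectives, it carries such a representative for $X$ to the corresponding representative for $X\otimes_A M$, so $F(\LL_A) \subseteq \LL_B$. Combined with the fact that the induced functor on stable categories is an equivalence (half of the defining property of the stable equivalence of Morita type; the parasitic summand $-\otimes_A P$ is projective at the module level and therefore zero in $\stmod$), this shows that the restriction $F \colon \LL_A \to \LL_B$ is an equivalence. If both algebras are self-injective, then $\LL_A \simeq \stmod A$ and $\LL_B \simeq \stmod B$ carry their natural triangulated structures, and $F$ is compatible with these, yielding a triangulated equivalence.

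\emph{Part (2).} The full subcategory $\mathcal{X} := F^{-1}(\HP B) \subseteq \KKp A$ is triangulated and closed under isomorphisms; by Part~(1) it contains $\LL_A$, so by the minimality clause of \cref{IntThm:TriangulatedCat}(1) it contains $\HP A$. Hence $F$ restricts to a triangulated functor $\HP A \to \HP B$, and symmetrically $G$ to $\HP B \to \HP A$. To see these are quasi-inverse, let $\mathcal{Y} \subseteq \KKp A$ be the full subcategory of objects on which $\eta$ is an isomorphism; since the cone of a natural transformation of triangulated endofunctors yields a triangulated functor that vanishes precisely on a triangulated subcategory closed under isomorphisms, $\mathcal{Y}$ enjoys these closure properties. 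Part~(1), together with the fact that Kato's complex associated to a projective module is null-homotopic in $\KKp A$, shows that $\mathcal{Y}$ contains $\LL_A$, and \cref{IntThm:TriangulatedCat}(1) then forces $\HP A \subseteq \mathcal{Y}$, giving $G \circ F \simeq \id_{\HP A}$. The analogous argument using $\varepsilon$ yields $F \circ G \simeq \id_{\HP B}$.

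\emph{Part (3).} The same strategy works with $\HP$ replaced by $\Hstp$ and \cref{IntThm:TriangulatedCat}(2) replacing (1), provided that the preimage $F^{-1}(\Hstp B)$ and the analogous set $\mathcal{Y}$ are additionally closed under $\nu_{\mathcal{K}}$. This reduces to checking that $F$ commutes with $\nu_{\mathcal{K}}$ up to natural isomorphism, which should follow from the standard identification of the Nakayama functor with tensoring by $D(A)$ and the known preservation of strongly projective-injective modules under stable equivalences of Morita type. The main obstacle throughout is the base case on $\LL_A$: one must show that $\eta$ is genuinely an isomorphism in $\KKp A$—not merely in $\stmod$—on Kato's complexes, which amounts to controlling $X \otimes_A P$ for $X \in \LL_A$ and identifying it with the Kato complex of the projective module $X \otimes_A P$, which vanishes in $\LL_A$. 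Once this base case is established, the abstract minimality of $\HP A$ and $\Hstp A$ does all the remaining work.
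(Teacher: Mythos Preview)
Your strategy for Parts~(1) and~(2) is sound and is, in a sense, a cleaner repackaging of what the paper does. The paper verifies $F(\HP A)\subseteq \HP B$ directly via the cohomological description of $\HP A$ (\cref{Lem:H-iff-Complex_in-perp} together with \cref{Lem:MoritaType-IsoOnPerpP}), and then proves $GF\simeq\id$ by an explicit induction on the number of nonzero cohomology groups, peeling them off with the triangle of \cref{Lem:RemovingCohomologyOfF}. That induction is exactly the mechanism underlying \cref{IntThm:TriangulatedCat}(1), so your two invocations of minimality (for $\mathcal{X}=F^{-1}(\HP B)$ and for $\mathcal{Y}=\{X:\eta_X\text{ iso}\}$) amount to the same content without reproving it. Your ``main obstacle'' is real but easily closed: for $F^\bt_Y\in\LL_A$ the summand $F^\bt_Y\otimes_A P$ again lies in $\LL_A$ (the defining vanishing conditions pass to summands), and $\HH^0(\tau_{\leqslant 0}(F^\bt_Y\otimes_A P))\simeq Y\otimes_A P$ is projective, hence zero in $\stmod A$; the equivalence $\mathcal{F}$ then forces $F^\bt_Y\otimes_A P\simeq 0$ in $\KKp A$, so $\eta$ is an isomorphism on $\LL_A$. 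A bonus of your route is that naturality of the isomorphism $GF\simeq\id$ is automatic, whereas the paper's cone-completion argument has to be read with some care on that point.

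Where your proposal genuinely falls short is Part~(3). To invoke \cref{IntThm:TriangulatedCat}(2) you must check that $\mathcal{Y}$ is closed under $\nu_{\mathcal{K}}$, and you reduce this to ``$F$ commutes with $\nu_{\mathcal{K}}$''. Even granting $\nu_B\circ(-\otimes_A M)\simeq(-\otimes_A M)\circ\nu_A$ on projectives, this only controls the componentwise Nakayama; $\nu_{\mathcal{K}}$ further passes through $\KKpbi A\simeq\DDb A\simeq\KKmbp A$, and you would still need the unit $\eta$ to intertwine with this passage. None of this is set up, and it is not clear it can be made to work without essentially redoing the argument. The paper sidesteps the issue entirely: it never uses $\nu_{\mathcal{K}}$-closure. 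Instead it checks $F(\Hstp A)\subseteq\Hstp B$ directly from the cohomological description (\cref{Lem:H-iff-Complex_in-perp}(2) and \cref{Lem:MoritaType-IsoOnPerpP}(2)), and then runs the \emph{same} induction as in Part~(2) for any $F^\bt\in\Hstp A$; the only input needed at each step is $\HH^k(F^\bt)\in\stperp A$, which gives $\HH^k(F^\bt)\otimes_A P=0$ via \cref{Lem:MoritaType-IsoOnPerpP}(3). In your framework the cleanest fix is to abandon \cref{IntThm:TriangulatedCat}(2) here and instead argue directly that $F^\bt\otimes_A P\simeq 0$ for $F^\bt\in\Hstp A$: it is acyclic (as just noted), it inherits the bound $\HH_{\geqslant r}((-)^\ast)=0$ from $F^\bt$ as a summand of $F^\bt\otimes_A M\otimes_B N$, hence a shift of it lies in $\LL_A$, and then the same $\mathcal{F}$-argument as above kills it.
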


In case that $A$ is a self-injective algebra, $\stmod A$ is already a triangulated category.
Therefore, $\LL_A$ is a triangulated subcategory of $\KKp A$ and all previously mentioned subcategories
coincide. In this sense, the above constructions are compatible with the existing structure
of $\stmod A$ and $\KKp A$. We show, that this characterizes the property of $A$ to be self-injective.
In the following, $\KKptac A$ denotes the full subcategory of totally acyclic complexes in $\KKp A$.
\begin{LetterTheorem}[\cref{Thm:Charact_SelfInjective}] \label{IntThm:Self-Injective}
The following are equivalent for a finite dimensional algebra $A$.
\begin{itemize}
\item[(1)] $A$ is self-injective.

\item[(2)] $\LL_A$ is a triangulated subcategory of $\KKp A$.

\item[(3)] $\LL_A = \HP A$.

\item[(4)] $\LL_A$ is closed under taking shifts in $\KKp A$.

\item[(5)] $\LL_A = \KKptac A$.
\end{itemize}
If one of the above conditions holds, the functor $\stmod A \to \LL_A$ is an equivalence of triangulated categories.
Furthermore, we have $\KKptac A = \LL_A = \HP A = \Hstp A$.
\end{LetterTheorem}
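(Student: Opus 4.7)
My plan is to prove the equivalence by establishing the cycle $(1) \Rightarrow (5) \Rightarrow (3) \Rightarrow (2) \Rightarrow (4) \Rightarrow (1)$; the heart of the argument is the closing implication $(4) \Rightarrow (1)$, while the others are essentially formal given \cref{IntThm:TriangulatedCat}(1). For $(1) \Rightarrow (5)$, if $A$ is self-injective then every finitely generated module embeds into a projective, so a one-sided projective resolution splices with a projective co-resolution to produce a totally acyclic (Tate) complex. The resulting assignment $\stmod A \to \KKptac A$ is the classical Happel equivalence, and one verifies that it agrees with Kato's equivalence $\stmod A \to \LL_A$, giving $\LL_A = \KKptac A$. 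The chain $(5) \Rightarrow (3) \Rightarrow (2) \Rightarrow (4)$ then unfolds formally: since $\KKptac A$ is a triangulated subcategory of $\KKp A$ that is closed under isomorphisms and contains $\LL_A$, the minimality asserted by \cref{IntThm:TriangulatedCat}(1) forces $\HP A \subseteq \KKptac A = \LL_A \subseteq \HP A$; consequently $\LL_A = \HP A$ is triangulated, and every triangulated subcategory is closed under the ambient shift.

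The main obstacle is $(4) \Rightarrow (1)$. The backbone is the observation that, under Kato's equivalence, the shift in $\KKp A$ corresponds to the syzygy functor on $\stmod A$: a short exact sequence $0 \to \Omega M \to P \to M \to 0$ with $P$ projective produces an exact triangle in $\KKp A$ whose middle term has zero image in $\LL_A$, so that the image $L_A(\Omega M)$ of $\Omega M$ under Kato's functor is isomorphic to $L_A(M)[-1]$ (up to sign conventions). If now $\LL_A$ is closed under shifts, then for every module $M$ the object $L_A(M)[1]$ lies in $\LL_A$ and is therefore of the form $L_A(N)$ for some $N$; reading the above identification backwards, such an $N$ sits in a short exact sequence $0 \to M \to P \to N \to 0$ with $P$ projective, so $M$ embeds into a projective module. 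Applied to every $M$, this forces every indecomposable injective to be a summand of a projective and hence itself projective, which, combined with the bijection between the indecomposable projectives and the indecomposable injectives of $A$ afforded by the Nakayama functor, is equivalent to $A$ being self-injective.

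Finally, the supplementary statements fall out of the cycle. If any of (1)--(5) holds then $A$ is self-injective, so $\stmod A$ carries its Happel triangulated structure and the equivalence $\stmod A \to \LL_A$ is an equivalence of triangulated categories. The chain $\KKptac A = \LL_A = \HP A$ has already been obtained, and the remaining equality $\HP A = \Hstp A$ follows because over a self-injective algebra the Nakayama functor is an auto-equivalence of $\rmod A$; hence the classes of projective-injective and strongly projective-injective modules coincide, and the two orthogonality conditions defining $\HP A$ and $\Hstp A$ in \cref{Def:Category_H} agree.
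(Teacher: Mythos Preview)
Your cycle differs from the paper's: the paper runs $(1)\Rightarrow(2)\Rightarrow(3)\Rightarrow(4)\Rightarrow(5)\Rightarrow(1)$, closing with a reflexivity argument for $(5)\Rightarrow(1)$ (the totally acyclic structure of $F_X^\bt$ forces $(X^\ast)^\ast\simeq X$ for every $X$, which characterises self-injectivity). Your route closing at $(4)\Rightarrow(1)$ via embeddings into projectives is a legitimate alternative, and the formal implications $(1)\Rightarrow(5)\Rightarrow(3)\Rightarrow(2)\Rightarrow(4)$ are fine.

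There is, however, a gap in your justification of the backbone claim $L_A(\Omega M)\simeq L_A(M)[-1]$. The short exact sequence $0\to\Omega M\to P\to M\to 0$ does \emph{not} in general produce a distinguished triangle $F_{\Omega M}^\bt\to 0\to F_M^\bt\to$ under Kato's functor: by \cref{Prop:PerfSeq-DistTriang} this requires the sequence to be perfect exact, i.e.\ $\Ext^1_A(M,A)=0$, which fails for generic $M$ over a non-self-injective algebra. Equivalently, $F_M^\bt[-1]$ need not lie in $\LL_A$ at all --- the obstruction is precisely $\HH_{-1}(F_{M,\bt}^\ast)\simeq\Ext^1_A(M,A)$ --- so the isomorphism you assert cannot hold as a general fact independent of (4).

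The fix is to invoke hypothesis (4) \emph{before} making the claim. Once $\LL_A$ is closed under shifts, $F_M^\bt[-1]\in\LL_A$ is given, and applying the quasi-inverse $\HH^0(\tau_{\leqslant 0}(-))$ of \cref{Thm:Equivalence_F} directly yields $\HH^0(\tau_{\leqslant 0}(F_M^\bt[-1]))=\Cok(d_{F_M}^{-2})\simeq\Omega M$, whence $F_M^\bt[-1]\simeq F_{\Omega M}^\bt$ without any appeal to perfect exactness. Your ``reading backwards'' step then goes through: $F_N^\bt\simeq F_M^\bt[1]$ gives $M\stackrel{\mathrm{st}}{\simeq}\Omega N$, so $M$ embeds into a projective. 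Even more directly, $F_M^\bt[1]\in\LL_A$ forces $\HH^{-1}(F_M^\bt[1])=\HH^0(F_M^\bt)=\ker(M\to M^{\ast\ast})=0$, so every $M$ is torsionless --- which is essentially the dual of the paper's reflexivity computation, reached from (4) rather than (5).
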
 

This article is structured as follows. In a first section, we recall some necessary notation and the construction
of the category $\LL_A$. Furthermore, we introduce the definition of the categories $\HP A$ and $\Hstp A$ in \cref{Def:Category_H}.
In \cref{Sec:Category_HP(proj A)}, we show that $\HP A$ is the smallest triangulated subcategory of $\KKp{A}$ 
that contains $\LL_A$ and is closed under isomorphisms. The following \cref{Sec:Grothendieck} studies the Grothendieck
group of $\HP A$. In \cref{Sec:Hstp}, we show that $\Hstp A$ is the smallest triangulated subcategory of 
$\KKp{A}$ that contains $\LL_A$ and is closed under $\nu_{\mathcal{K}}$ and under isomorphisms.
\cref{Sec:SelfInjective} is dedicated to the special case of self-injective algebras. Finally, we show that a stable
equivalence preserves the categories $\HP A$ and $\Hstp A$ in \cref{Sec:StEquivMoritaType}.

\section{Preliminaries}

Let $A$ and $B$ be finite dimensional $k$-algebras over a field $k$.
We assume that $A$ and $B$ do not have any semisimple direct summands.

Let $\rmod A$ be the category of finitely generated right $A$-modules.
We write $\proj A$ and $\inj A$ for the full subcategory of projective and injective $A$-modules respectively.
The corresponding categories of left $A$-modules are denoted by $\modleft A$, $\projleft A$ and $A$-inj.
If not specified otherwise, modules are finitely generated right modules.

Given morphisms $f : X \to Y$ and $ g: Y \to Z$ in a category, we denote the composite of $f$ and $g$ by $fg : X \to Z$.
On the other hand, we write $\GG \circ \FF$ for the composite of two functors
$\FF : \mathcal{C} \to \mathcal{D}$ and $\GG : \mathcal{D} \to \mathcal{E}$ between categories.

Let $\mathcal{A}$ be an additive category and $\mathcal{S}$ be a full subcategory of $\mathcal{A}$.
We write $\!{}^\perp\mathcal{S}$ for the full subcategory of $\mathcal{A}$ consisting of all
objects $X$ in $\mathcal{A}$ such that $\Hom_{\mathcal{A}}(X,Z) = 0$ for all $Z \in \mathcal{S}$.
Analogously, we define the category $\mathcal{S}^\perp$.

We write $\Du(-) := \Hom_k(-,k) : \rmod A \to \modleft A$ for the $k$-duality.
Recall that the functor $(-)^\ast := \Hom_A(-,A) : \rmod A \to \modleft A$ restricts to an equivalence
$(-)^\ast : \proj A \to \projleft A$. 
Similarly, the Nakayama functor $\nu_A := \Du \Hom_A(-,A) : \rmod A \to \rmod A$ restricts to an equivalence
$\nu_A : \proj A \to \inj A$. The quasi-inverse is given by $\nu_A^{-1} = (\Du(-))^\ast$.

We denote the full category of projective-injective $A$-modules by $\PP_A$.
We say that an $A$-module $Z$ is \textit{strongly projective-injective} if $\nu^k Z$ is projective 
for all $k \in \ZZ_{\geqslant 0}$. The full subcategory of strongly projective-injective $A$-modules will be denoted
by $\stp A$. Note that strongly projective-injective modules are projective-injective.
The following variation of dominant dimension has been introduced in \cite[Definition 2.4]{FangHuKoenig_DerivedEquiv}.
\begin{Definition} \label{Def:DomDim}
Let $0 \to A \to I^0 \to I^1 \to I^2 \to \cdots$ be a minimal injective resolution of $A$.
The $\nu$\textit{-dominant dimension} of $A$ is defined as the largest $d \in \ZZ_{\geqslant 0}$ such that
$I^k$ is strongly projective-injective for all $k < d$. We set $d = \infty$ if $I^k$ is 
strongly projective-injective for all $k \geq 0$.
We denote the $\nu$-dominant dimension of $A$ by $\nddim A$.
\end{Definition}

The \textit{stable module category} $\stmod A$ is the category with the same objects as $\rmod A$
and with morphisms $\underline{\Hom}_A(X,Y) := \Hom_A(X,Y)/\mathrm{PHom}_A(X,Y)$ for $X,Y \in \rmod A$.
A morphism $f : X \to Y$ is an element of $\mathrm{PHom}_A(X,Y)$ if there exists a $P \in \proj A$ 
such that $f$ factors through $P$. We say that $A$ and $B$ are stably equivalent, if there exists an equivalence
$\stmod A \to \stmod B$.

An element $F^\bt = (F^k)_{k \in \ZZ} \in \CC A$ in the category of complexes 
will be written as a cochain complex with differential 
$(d^k)_{k \in \ZZ} := (d_F^k)_{k \in \ZZ}$ as follows.
\[
\cdots \rightarrow F^{-2} \xrightarrow{d^{-2}} F^{-1} \xrightarrow{d^{-1}} F^0 \xrightarrow{d^0} F^1 \xrightarrow{d^1} F^2 \rightarrow \cdots
\]
We denote the cohomology of $F^\bt$ in degree $k \in \ZZ$ by $\HH^k(F^\bt) := \ker d^k/\im d^{k-1}$.
For $n \in \ZZ$, truncation $\tau_{\leqslant n}\, F^\bt$ of $F^\bt$ is defined as follows.
\[
\cdots \rightarrow F^{n-2} \xrightarrow{d^{n-2}} F^{n-1} \xrightarrow{d^{n-1}} F^n \rightarrow 0 \rightarrow 0 \rightarrow \cdots
\]
We often abbreviate $F^{\leqslant n} := \tau_{\leqslant n}\, F^\bt$ and similarly for 
$F^{\geqslant n} := \tau_{\geqslant n}\, F^\bt$.
We also use the notation $F^{\leqslant n}$ to indicate that $F^k = 0$ for $k > n$.
An $A$-module $X$ will be identified with the complex $X \in \CC{A}$ consisting of $X$ concentrated in degree zero.

By componentwise application, the equivalence $(-)^\ast = \Hom_A(-,A) : \proj A \to A$-proj can be extended to an equivalence 
\[
(-)^\ast : \CCp A \to \CCpleft A : F^\bt \to F_\bt^\ast = F^{\bt, \ast}
\]
Here, we write $F_k^\ast := F^{k,\ast} := \left(F^k\right)^\ast$ as the chain complex with
$d^{F^\ast}_k := d_F^{k,\ast} := \left(d_F^k\right)^\ast$ for $k \in \ZZ$.
\[
\cdots \rightarrow F_2^\ast \xrightarrow{d_1^\ast} F_1^\ast \xrightarrow{d_0^\ast} F_0^\ast
		 \xrightarrow{d_{-1}^\ast} F_{-1}^\ast \xrightarrow{d_{-2}^\ast} F_{-2}^\ast \rightarrow \cdots
\]
We denote the homology of $F_\bt^\ast$ in degree $k \in \ZZ$ by $\HH_k(F_\bt^\ast) = \ker(d_{k-1}^\ast)/\im(d_k^\ast)$.
In this sense, we use both chain complexes and cochain complexes in our notation.
However, we reserve the notation of chain complexes for dualized cochain complexes.

Similarly to $(-)^\ast$, the functors $\Du$ and $\nu$ also induce equivalences
$\Du : \CC A \to \CCleft A$ and $\nu : \CCp A \to \CCi A$ respectively.

Let $\mathcal{A}$ be an additive subcategory of $\rmod A$.
We write $\mathcal{K}(\mathcal{A})$ for the homotopy category of complexes over $\mathcal{A}$.
If $\mathcal{A}$ is an abelian category, $\mathcal{D}(\mathcal{A})$ denotes the derived category.
We write $\mathcal{C}^+(\mathcal{A})$, $\mathcal{K}^+(\mathcal{A})$ and $\mathcal{D}^+(\mathcal{A})$ for the subcategory consisting of 
left bounded complexes in $\mathcal{C}(\mathcal{A})$, $\mathcal{K}(\mathcal{A})$ and $\mathcal{D}(\mathcal{A})$ respectively. 
Similarly, we write $\mathcal{C}^-(\mathcal{A})$, $\mathcal{K}^-(\mathcal{A})$ and $\mathcal{D}^-(\mathcal{A})$ for right bounded complexes.
The subcategory of left and right bounded complexes is denoted by 
$\mathcal{C}^b(\mathcal{A})$, $\mathcal{K}^b(\mathcal{A})$ or $\mathcal{D}^b(\mathcal{A})$.
By $\mathcal{C}^{+,b}(\mathcal{A})$, $\mathcal{K}^{+,b}(\mathcal{A})$ and $\mathcal{D}^{+,b}(\mathcal{A})$ 
we denote the subcategory of left bounded complexes that are bounded in cohomology.	
Similarly, the subcategories $\mathcal{C}^{+,b^\ast}\!(\proj A)$, $\mathcal{K}^{+,b^\ast}\!(\proj A)$ and 
$\mathcal{D}^{+,b^\ast}\!(\proj A)$ consist of the left bounded complexes 
$F^\bt$ with bounded homology $\HH_\bt(F_\bt^\ast)$.
The analogue categories for right bounded complexes are defined similarly.
Finally, we write $\KKptac A$ for the full subcategory of $\KKp A$ consisting of totally acyclic complexes.

In \cite{Kato_Kernels}, Kato constructs an equivalence $\stmod A \to \LL_A$ between the stable module category and
a full subcategory $\LL_A$ of $\KKp A$.
\begin{Definition}[Kato] \label{Def:L-Category}
Let $\LL_A$ be the full subcategory of $\KKp{A}$ defined as follows.
\[
\LL_A = \{ F^\bt \in \KKp{A} \;\vert\; \mathrm{H}^{<0}(F^\bt) = 0, \, \mathrm{H}_{\geqslant0}(F^\ast_\bt) = 0 \}
\]
\end{Definition}

Let $X \in \rmod A$. We sketch the construction of a complex $F_X^{\bt} \in \LL_A$.
See \cite[Lemma 2.9]{Kato_Mono} for more details.

Choose $\tleq F_X^\bt$ as the minimal projective resolution of $X$
and complete $F_X^{0,\ast} \to F_X^{-1,\ast}$ to the minimal projective resolution $\tau_{\geqslant -1} F_X^{\bt, \ast}$ 
of $\Tr(X)$. Splicing these two projective resolutions together gives a complex $F_X^{\bt} \in \LL_A$. 
In particular, $\tau_{\geqslant 1} F_X^{\bt, \ast}$ is the minimal projective resolution of $X^\ast$. 
Moreover, $F_X^0$ is the projective cover of $X$ and $F_X^{1,\ast}$ the projective cover of $X^\ast$.
If $X$ is simple, $\nu(F_X^0)$ is the injective hull of $X$.

For $X \xrightarrow{f} Y$ in $\rmod A$, we lift $f$ to a morphism between projective resolutions 
$\tleq F_X^\bt \to \tleq F_Y^\bt$ in non-negative degrees. Similarly, we lift $f^\ast$ to a morphism
$\tau_{\geqslant -1} F_X^{\bt, \ast} \to \tau_{\geqslant -1} F_X^{\bt, \ast}$ such that the lifts coincide in degree $-1$
and $0$. Together, this gives a a morphism $f^\bt : F_X^\bt \to F_Y^\bt$ in $\LL_A$.

In \cite[Theorem 2.6]{Kato_Kernels} it was shown that this defines an equivalence $\stmod A \to \LL_A$ 
in the setting of commutative rings. However, the proof still works in the same way for arbitrary finite dimensional algebras.
In the future, we will often use this equivalence without further comment.
\begin{Theorem}[Kato] \label{Thm:Equivalence_F}
The mapping $X \mapsto F_X^\bt$ defines a functor $\FF : \stmod A \to \KKp A$. 
The functor $\FF$ restricts to an equivalence
\[
\FF : \stmod A \xrightarrow{\sim} \LL_A
\]
with quasi-inverse $\HH^0(\tleq(-)) : \LL_A \to \stmod A$.
\end{Theorem}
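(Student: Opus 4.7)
The plan is to establish in turn (i) that $\FF$ is well-defined on $\stmod A$, (ii) that $\GG := \HH^0 \circ \tleq$ satisfies $\GG \circ \FF \simeq \id_{\stmod A}$, and (iii) that every $F^\bt \in \LL_A$ is isomorphic in $\KKp A$ to $F_{\GG(F^\bt)}^\bt$. Together (i)--(iii) yield both the equivalence and its quasi-inverse.

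For (i), uniqueness of minimal projective resolutions up to isomorphism in $\CCp A$ renders $F_X^\bt$ unique up to isomorphism in $\KKp A$. Given $f : X \to Y$, the classical comparison theorem makes the lift $\tleq F_X^\bt \to \tleq F_Y^\bt$ (and dually $\tau_{\geqslant -1} F_Y^{\bt, \ast} \to \tau_{\geqslant -1} F_X^{\bt, \ast}$) unique up to chain homotopy; after arranging agreement in degrees $-1, 0$, the two lifts glue into $\FF(f) \in \KKp A$ and the associated homotopies glue similarly, giving well-definedness and functoriality. If $f$ factors through a projective $P$, then $\FF(f)$ factors through $F_P^\bt$; since $\Tr P = 0$, the complex $F_P^\bt$ equals $P \xrightarrow{\id} P$ in degrees $0, 1$, which is contractible, so $\FF(f) = 0$ in $\KKp A$. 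Hence $\FF$ descends to $\stmod A$.

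For (ii), $\HH^0(\tleq F_X^\bt) = X$ by construction, and any lift $f^\bt$ of $f$ induces $f$ on $\HH^0$, so $\GG \circ \FF \simeq \id$. For (iii), given $F^\bt \in \LL_A$ put $X := \GG(F^\bt)$. The condition $\HH^{<0}(F^\bt) = 0$ makes $\tleq F^\bt$ a projective resolution of $X$, while $\HH_{\geqslant 0}(F_\bt^\ast) = 0$ makes $\tau_{\geqslant -1} F^{\bt, \ast}$ a projective resolution of $\Tr X$. Standard comparison arguments (together with splitting off contractible summands to reduce to the minimal case) produce homotopy equivalences on each half; gluing them across degrees $-1, 0$ yields an isomorphism $F^\bt \cong F_X^\bt$ in $\KKp A$, delivering essential surjectivity and $\FF \circ \GG \simeq \id_{\LL_A}$ simultaneously.

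The main obstacle is the faithfulness of $\FF$, hidden inside (ii): one must check that $f^\bt = 0$ in $\KKp A$ forces $f \in \mathrm{PHom}_A(X, Y)$. Let $\pi_X : F_X^0 \to X$ and $\pi_Y : F_Y^0 \to Y$ denote the surjections in the resolutions. A null-homotopy $h$ gives $f^0 = d_Y^{-1} h^0 + h^1 d_X^0$ in degree $0$. Since $\pi_Y d_Y^{-1} = 0$ and $d_X^0$ annihilates $\ker \pi_X = \im d_X^{-1}$ (so $d_X^0 = \tilde{d} \circ \pi_X$ for some $\tilde{d} : X \to F_X^1$), post-composing with $\pi_Y$ and dividing out $\pi_X$ yields $f = \pi_Y \circ h^1 \circ \tilde{d}$, a factorization of $f$ through the projective $F_X^1$. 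The secondary technical point is the careful gluing of homotopies across degrees $-1, 0$ appearing in (i) and (iii); this is routine but should be set up explicitly.
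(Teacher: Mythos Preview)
The paper does not supply a proof of this statement. The theorem is attributed to Kato, and the paragraph immediately preceding it refers the reader to \cite[Theorem~2.6]{Kato_Kernels}, remarking only that Kato's argument (formulated there for commutative rings) carries over unchanged to finite-dimensional algebras. There is therefore no in-paper proof to compare your proposal against.

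That said, your outline is a correct direct verification and is in the spirit of Kato's original approach. Two minor remarks. First, the paper's construction of $F_X^\bt$ via \emph{minimal} resolutions is really intended for $X$ without projective summands; for $P$ projective one should simply take $F_P^\bt = 0$ (since $P \simeq 0$ in $\stmod A$), though your contractible complex $P \xrightarrow{\id} P$ works equally well for the argument in (i). Second, the ``routine'' gluing of homotopies across degrees $-1,0$ that you flag at the end is indeed the crux: it is precisely what guarantees that any two lifts of the same $f$ are homotopic as morphisms $F_X^\bt \to F_Y^\bt$ (not merely on each half separately), and hence what underwrites both the fullness of $\FF$ and the \emph{naturality} of the isomorphisms $F^\bt \cong F_{\GG(F^\bt)}^\bt$ in (iii). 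Without naturality there, (i)--(iii) would give $\FF$ faithful and essentially surjective but not obviously full, so this step deserves the explicit treatment you suggest. Your faithfulness computation is clean and correct; note only that the paper composes morphisms left-to-right, so your displayed formulas would need transposing for consistency with the surrounding text.
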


We will also need the following class of short exact sequences.
\begin{Definition} \label{Def:PerfSeq}
A short exact sequence $0 \to X \to Y \to Z \to 0$ in $\rmod A$ is called \textit{perfect exact} if
the induced sequence $0 \to X^\ast \to Y^\ast \to Z^\ast \to 0$ is exact in $A$-mod.
\end{Definition}

Under the equivalence $\FF: \stmod A \to \LL_A$, perfect exact sequences correspond to distinguished triangles
in $\KKp A$ and vice versa. The following proposition is based on \cite[Proposition 3.6]{Kato_Mono}.
\begin{Proposition}[Kato] \label{Prop:PerfSeq-DistTriang}
Suppose given a sequence $X \xrightarrow{f} Y \xrightarrow{g} Z$ in $\rmod A$ such that 
$Y$ has no projective direct summand.
The following are equivalent.
\begin{itemize}
\item[(1)] There exists a projective module $P$ and morphisms $p$ and $q$ such that
\[ 
0 \rightarrow X \xrightarrow{\matez{f}{p}} Y \oplus P \xrightarrow{\matze{g}{q}} Z \rightarrow 0
\]
is a perfect exact sequence in $\rmod A$.

\item[(2)] The sequence 
$ F_X^\bt \xrightarrow{f^\bt} F_Y^\bt \xrightarrow{g^\bt} F_Z^\bt \to$
is a distinguished triangle in $\KKp A$.
\end{itemize}
\end{Proposition}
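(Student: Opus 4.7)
The plan is to use the standard correspondence between degree-wise split short exact sequences of complexes in $\CCp A$ and distinguished triangles in $\KKp A$, together with the explicit two-sided structure of the complexes $F_W^\bt$ as spliced minimal projective resolutions of $W$ and of $\Tr W$.

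For $(1) \Rightarrow (2)$, I plan to produce a short exact sequence of complexes $0 \to F_X^\bt \to F_{Y \oplus P}^\bt \to F_Z^\bt \to 0$ in $\CCp A$ that splits in each degree. On non-positive degrees, the horseshoe lemma applied to the minimal projective resolutions of $X$, $Y \oplus P$, and $Z$ produces such a degree-wise split SES, where the middle term is the (generally non-minimal) horseshoe resolution. On positive degrees, perfect exactness provides the exact sequence $0 \to Z^\ast \to (Y \oplus P)^\ast \to X^\ast \to 0$ in $\modleft A$, and since $\Tr(Y \oplus P) = \Tr Y$ in the left stable module category, a second horseshoe applied to the minimal projective resolutions of $\Tr Z$, $\Tr Y$, and $\Tr X$ yields a compatible SES in $\CCpleft A$; applying $(-)^\ast$ brings it back to $\CCp A$. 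Splicing the two halves at degrees $-1$ and $0$ gives the desired SES, which lifts to a distinguished triangle in $\KKp A$. Using $Y \oplus P \cong Y$ in $\stmod A$ and the equivalence $\FF$, one has $F_{Y \oplus P}^\bt \cong F_Y^\bt$ in $\LL_A$, so the triangle takes the required form with $F_Y^\bt$ as the middle term; checking at the degree-$0$ component confirms the induced maps are $f^\bt$ and $g^\bt$.

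For $(2) \Rightarrow (1)$, realise the triangle, up to isomorphism, as a mapping cone $F_Z^\bt \cong \mathrm{Cone}(f^\bt)$ in $\KKp A$, giving a degree-wise split short exact sequence of complexes $0 \to F_Y^\bt \to \mathrm{Cone}(f^\bt) \to F_X^\bt[1] \to 0$ in $\CCp A$. I would then apply the functor $\HH^0(\tleq -) : \KKp A \to \stmod A$, which is well-defined because a null-homotopy relation $f^0 = d_G^{-1} h^0 + h^1 d_F^0$ has the $h^1 d_F^0$ summand factoring through the projective $F^1$; on $\LL_A$ this functor is the quasi-inverse of $\FF$ from \cref{Thm:Equivalence_F}. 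Combining with the analogous functor on the dualised complexes --- which extracts $\Tr$-information via the chain-homology in degree $-1$ of $\tau_{\geqslant -1}(-)^\ast$ --- one obtains a pair of short sequences, one in $\rmod A$ and one in $\modleft A$, exact up to projective summands. The hypothesis that $Y$ has no projective direct summand is then used to collect all the resulting projective ambiguity into a single module $P$ on the middle term, yielding the perfect exact sequence $0 \to X \to Y \oplus P \to Z \to 0$.

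The main obstacle lies in the direction $(2) \Rightarrow (1)$: tracking how the isomorphism $F_Z^\bt \cong \mathrm{Cone}(f^\bt)$ in $\KKp A$ descends to the module level, and carefully verifying that the projective corrections arising from the non-minimality of the mapping cone and from the freedom in the chain-homotopy class can all be absorbed into a single middle-term projective summand under the assumption on $Y$. The first direction is comparatively routine, reducing to two parallel horseshoe constructions that splice cleanly in degrees $-1$ and $0$.
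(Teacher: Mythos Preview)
The paper does not prove this proposition; it is attributed to Kato with a citation to \cite[Proposition~3.6]{Kato_Mono} and no argument is given. So there is nothing in the present paper to compare your proposal against directly.

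Your plan for $(1)\Rightarrow(2)$ is the natural one and works: a horseshoe on the projective-resolution side and a second horseshoe on the $(-)^\ast$ side (available precisely because of perfect exactness) splice to a degree-wise split short exact sequence in $\CCp A$ whose middle term lies in $\LL_A$ and is therefore isomorphic to $F_{Y\oplus P}^\bt\simeq F_Y^\bt$ in $\KKp A$.

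For $(2)\Rightarrow(1)$ your choice of cone creates an avoidable difficulty. In the sequence $0\to F_Y^\bt\to\mathrm{Cone}(f^\bt)\to F_X^\bt[1]\to 0$ the third term $F_X^\bt[1]$ is not in $\LL_A$, and $\HH^0\!\big(\tleq F_X^\bt[1]\big)=\Cok(d_{F_X}^0)$ rather than $X$; applying $\HH^0(\tleq(-))$ to this sequence therefore produces $Y\to Z\oplus(\text{projective})\to\Cok(d_{F_X}^0)\to 0$, which is not the sequence you are after, and the ``projective bookkeeping'' you anticipate does not repair this. The clean fix is to realise the same triangle through its connecting morphism $h^\bt:F_Z^\bt\to F_X^\bt[1]$ and set $E^\bt:=\mathrm{Cone}(h^\bt)[-1]$, giving a degree-wise split sequence $0\to F_X^\bt\to E^\bt\to F_Z^\bt\to 0$ with $E^\bt\simeq F_Y^\bt$ in $\KKp A$. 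Now $\tleq$ of all three terms are genuine projective resolutions, so the long exact cohomology sequence yields $0\to X\to\HH^0(\tleq E^\bt)\to Z\to 0$ directly, and $\HH^0(\tleq E^\bt)\simeq Y\oplus P$ for some $P\in\proj A$ since $E^\bt$ differs from $F_Y^\bt$ by a contractible summand and $Y$ has no projective summand. Running the same argument on the $(-)^\ast$ side, and using $\HH_0(E_\bt^\ast)=0$ to identify $\Cok(E_2^\ast\to E_1^\ast)$ with $(Y\oplus P)^\ast$, then gives perfect exactness. Your outline is morally correct; only the choice of which mapping cone to unfold needs adjusting.
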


Recall that $\PP_A$ denotes the category of projective-injective $A$-modules.
The subcategory of strongly projective-injective $A$-modules is denoted by $\stp A$.
We introduce the following subcategories of $\KKp A$ with a left bound on cohomology and a right bound on homology.

\begin{Definition} \label{Def:Category_H}
We denote by $\Hp A$ the full subcategory of $\KKp A$ consisting of all complexes
$F^\bt \in \KKp A$ such that there exist $l, r \in \ZZ$ with $\HH^{< l}(F^\bt) = 0$ and 
$\HH_{\geqslant r}(F_\bt^{\ast}) = 0$.

We denote by \;$\HP A$\, the full subcategory of $\Hp A$ consisting of all complexes in
$\KPperp A = \{ F^\bt \in \KKp A : \Hom_{\KKp A}(F^\bt, Z^\bt) = 0 \text{ for all } Z^\bt \in \KP A \}$.

We denote by $\Hstp A$ the full subcategory of $\Hp A$ consisting of all complexes in
$\Kstperp A = \{ F^\bt \in \KKp A : \Hom_{\KKp A}(F^\bt, Z^\bt) = 0 \text{ for all } Z^\bt \in \Kstp A \}$.
\end{Definition}

\begin{Remark} \label{Rem:Properties_H-HP-Hstp}
\begin{itemize}
\item[(1)] Note that $\Hp A$ is a triangulated subcategory of $\KKp A$.
           As perpendicular categories, $\HP A$ and $\Hstp A$ are 
           triangulated subcategories of $\Hp{A}$.
           Furthermore, $\Hp A$, $\HP A$ and $\Hstp A$ are closed under isomorphisms in $\KKp A$.

\item[(2)] We have a chain of subcategories $\HP A \subseteq \Hstp A \subseteq \Hp A \subseteq \KKp A$.
           Furthermore, $\LL_A \subseteq \Hp A$ by letting $l = 0$ and $r = 0$ in the definition of $\Hp A$.
           In this sense, the boundary conditions of $\Hp A$ can be seen as a weaker version of those in $\LL_A$.
           They will be used in \cref{Lem:Reduction-To-ProjRes}.
           In particular, the smallest triangulated subcategory of $\KKp A$ that contains $\LL_A$ 
           must be contained in $\Hp A$.

\item[(3)] In general, complexes in $\Hp A$ are neither left bounded nor right bounded.          
           However, we have $\Hp A \simeq \KKbp{A}$ if and only if $\gdim{A} < \infty$. 
           \vspace*{2mm}           
           
           \noindent
           In fact, every complex in $\Hp A$ can be truncated on the right to obtain a projective resolution in $\rmod A$.
           Thus, the complex must split eventually, if it is unbounded on the left and $\gdim A < \infty$.
           Similarly, every complex in $\Hp A$ can be truncated on the left to obtain a projective resolution in $A$-mod
           after applying $(-)^\ast$. Moreover, every projective resolution of a left or right $A$-module occurs in this way.
           
\item[(4)] Note that $\HP A = \Hstp A = \Hp A$ if $A$ has no projective-injective modules.
           In particular, we have that $\HP A = \Hstp A \simeq \KKbp A$ is the bounded derived category of $A$
           if additionally $\gdim A < \infty$.
           The same holds for $\Hstp A$ and $\Hp A$ if $A$ has no strongly projective-injective modules.
\end{itemize}
\end{Remark}
The categories discussed in this article can be visualized as follows.
Note that the inclusion $\LL_A \hookrightarrow \HP A$ will be verified in \cref{Lem:L-is-in-K_perp}.
In general, this chain of subcategories has a proper inclusion at every position.
However, we will see later in \cref{Thm:Charact_SelfInjective} that 
$\KKptac A = \LL_A = \HP A = \Hstp A$ if and only if $A$ is self-injective.    
\[
\begin{tikzcd}[column sep = .6cm]
\!\!\KKptac A \ar[r, hookrightarrow] & \LL_A \ar[r, hookrightarrow] & \HP A \ar[r, hookrightarrow] & \Hstp A \ar[r, hookrightarrow] & \Hp A \ar[r, hookrightarrow] & \KKp A \\
                                 & \stmod A \ar[u, "\mathcal{F}"', "\sim" sloped]
\end{tikzcd}
\]
For now, we discuss $\HP A$ in more detail in the next section.
Later, in \cref{Sec:Hstp}, we return to the category $\Hstp A$.

\section{\texorpdfstring{A triangulated hull}{A triangulated hull}}
\label{Sec:Category_HP(proj A)}

The aim of this section is to show that $\HP A$ is the smallest triangulated subcategory of $\KKp A$ that contains $\LL_A$
and is closed under isomorphisms.
In order to prove this, we have to verify that $\LL_A$ is contained in $\HP A$ and that a complex $F^\bt \in \HP A$ 
is an element of any triangulated subcategory of $\KKp A$ that contains $\LL_A$ and is closed under isomorphisms. 
The first assertion follows from the next two results. For the second assertion, we then proceed as follows.

Initially, we observe that a complex is in $\KPperp A$ if and only if its cohomology is in $\Pperp_A$; cf.\ \cref{Lem:H-iff-Complex_in-perp}.
Next, we reduce the problem in \cref{Lem:Reduction-To-ProjRes} to projective resolutions of modules in $\Pperp_A$.
As a further reduction step, we see in \cref{Lem:ProjRes-if-CompFactors-in-T} that it is enough to consider simple modules in $\Pperp_A$.
Finally, we show in \cref{Lem:ProjRes-in-T_Simple} that the assertion holds for projective resolutions of simple modules in $\Pperp_A$.

We start with the following lemma. In an exact degree, a complex in $\KK A$ has no non-zero morphism to a projective
module or from an injective module. The same holds for the dual complex in $\KKpleft A$.
\begin{Lemma} \label{Lem:Zero-Hom_At-exact-Pos}
Let $F^\bt \in \KK{A}$ and $k \in \ZZ$.
\begin{itemize}
\item[(1)] If $\HH^k(F^\bt) = 0$ then $\Hom_{\KK{A}}(F^\bt, Z[-k]) = 0$ for $Z \in \inj A$.
\item[(2)] If $\HH^k(F^\bt) = 0$ then $\Hom_{\KK{A}}(Z[-k], F^\bt) = 0$ for $Z \in \proj A$.
\end{itemize}
Now, assume that $F^\bt \in \KKp A$.
\begin{itemize}
\item[(1')] If $\HH_k(F_\bt^\ast) = 0$ then $\Hom_{\KKp{A}}(Z[-k],F^\bt) = 0$ 
            for $Z \in \proj A$ with $Z^\ast \in A\text{-}\mathrm{inj}$.
\item[(2')] If $\HH_k(F_\bt^\ast) = 0$ then $\Hom_{\KKp{A}}(F^\bt, Z[-k]) = 0$ for $Z \in \proj A$.
\end{itemize}
\end{Lemma}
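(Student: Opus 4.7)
The four assertions share the same structure: a morphism between the given complexes has a single potentially non-zero component, constrained by one commutation relation with a differential, and the exactness hypothesis together with injectivity or projectivity of $Z$ (or its dual) produces a null-homotopy. The plan is to treat (1) and (2) by a direct factorization argument, and to deduce (1') and (2') from the analogous chain-complex statements for $F_\bt^\ast$ via the duality $(-)^\ast$.

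For (1), a chain map $f : F^\bt \to Z[-k]$ is specified by a single morphism $f^k : F^k \to Z$ with $d_F^{k-1}\, f^k = 0$. The vanishing $\HH^k(F^\bt) = 0$ reads $\ker d_F^k = \im d_F^{k-1}$, so $f^k$ factors through $F^k / \ker d_F^k \cong \im d_F^k \subseteq F^{k+1}$. Injectivity of $Z$ extends the induced map $\im d_F^k \to Z$ along the inclusion to some $s : F^{k+1} \to Z$ with $d_F^k\, s = f^k$, providing the required null-homotopy. Part (2) is dual: the component $g^k : Z \to F^k$ satisfies $g^k\, d_F^k = 0$, so its image lies in $\ker d_F^k = \im d_F^{k-1}$, and projectivity of $Z$ gives a lift along $F^{k-1} \twoheadrightarrow \im d_F^{k-1}$.

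For (1') and (2'), the contravariant duality $(-)^\ast : \KKp A \to \KKpleft A$ is an equivalence of additive categories and hence induces a bijection on morphism sets in the homotopy categories. The hypothesis $\HH_k(F_\bt^\ast) = 0$ provides exactness of $F_\bt^\ast$ at chain degree $k$. A morphism $F^\bt \to Z[-k]$ corresponds under $(-)^\ast$ to a morphism $(Z[-k])^\ast \to F_\bt^\ast$ whose source has $Z^\ast$ as its only non-zero component, sitting in chain degree $k$; since $Z \in \proj A$ forces $Z^\ast \in A\text{-proj}$, the chain analogue of (2) yields (2') without further hypothesis. A morphism $Z[-k] \to F^\bt$ dualizes analogously to a morphism $F_\bt^\ast \to (Z[-k])^\ast$, and the chain analogue of (1) requires $Z^\ast$ to be injective as a left $A$-module -- exactly the hypothesis $Z^\ast \in A\text{-inj}$ imposed in (1'). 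The main obstacle is bookkeeping: one must verify that the single potentially non-zero component of the dualized morphism lives in precisely the chain degree at which $F_\bt^\ast$ is exact. Once this indexing is aligned, the lifting and extension arguments of (1) and (2) transfer to the dual side without modification.
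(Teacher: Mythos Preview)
Your proposal is correct and follows essentially the same approach as the paper: a direct factorization/extension argument for (1) using injectivity of $Z$, the dual argument for (2), and then the reduction of (1') and (2') to the left-module versions of (1) and (2) via the duality $(-)^\ast$, with the same observation that $Z^\ast$ is automatically projective for (2') while the extra hypothesis $Z^\ast \in A\text{-inj}$ is needed for (1').
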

\begin{proof} 
\textit{Ad (1) and (2).}
Suppose given a morphism of complexes $f^\bt : F^{\bt} \to Z[-k]$. In particular, we have $d^{k-1} f^k = 0$.
\[
\begin{tikzcd}
\cdots \ar[r] & F^{k-1} \ar[r, "d^{k-1}"] \ar[d, "f^{k-1}"] & F^{k} \ar[r, "d^k"] \ar[d, "f^{k}"] & 
		F^{k+1} \ar[r]\ar[d, "f^{k+1}"] & \cdots & F^\bt \ar[d, "f^\bt"] \\
\cdots \ar[r] & 0 \ar[r] & Z \ar[r] & 0 \ar[r] & \cdots & Z[-k]
\end{tikzcd}
\]
By assumption, we have $\ker{d^k} = \im{d^{k-1}} \subseteq \ker{f^k}$.
Thus, there exists a morphism $g : F^{k} / \, \ker{d^k} \to Z$ such that the following diagram commutes.
\[
\begin{tikzcd}
F^{k} \ar[dr, "\pi"] \ar[ddr, "f^k", bend right] \ar[rrd, "d^k", bend left] \\
 & F^{k} / \, \ker{d^k} \ar[r, hookrightarrow, "\tilde{d^k}"] \ar[d, "g"] & F^{k+1} \\
 & Z
\end{tikzcd}
\]
Since $Z$ is injective, there exists a morphism $h : F^{k+1} \to Z$ with $\tilde{d^k} \, h = g$.
We obtain
\[
d^k \, h = \pi \, \tilde{d^k} \, h = \pi \, g = f^k
\]
so that $f^\bt = 0$.
This shows part (1). Part (2) follows dually.

\textit{Ad (1') and (2').}
Since $\Hom_{\KKp{A}}(Z[-k],F^\bt) \simeq \Hom_{\KKpleft{A}}(F_\bt^\ast, Z^\ast[-k])$, part $(1')$ follows from part $(1)$
applied to left $A$-modules.

Similarly, since $\Hom_{\KKp{A}}(F^\bt, Z[-k]) \simeq \Hom_{\KKpleft{A}}(Z^\ast[-k], F_\bt^\ast)$, 
part $(2')$ follows from part $(2)$ applied to left $A$-modules.
\end{proof}

We extend the previous result to morphisms between complexes. 
In particular, this lemma shows that $\LL_A$ is contained in $\HP A$.
\begin{Lemma} \label{Lem:L-is-in-K_perp} 
Let $F^\bt \in \KKp A$.
\begin{itemize}
\item[(1)] If $\HH^\bt(F^\bt) = 0$, then $\Hom_{\KK A}(F^\bt, Z^\bt) = 0$ for all $Z^\bt \in \KKbi A$.
\item[(2)] If $\HH_\bt(F_\bt^\ast) = 0$, then $\Hom_{\KKp A}(F^\bt, Z^\bt) = 0$ for all $Z^\bt \in \KKbp A$.
\item[(1')] If $\HH_\bt(F_\bt^\ast) = 0$, then $\Hom_{\KKp A}(Z^\bt, F^\bt) = 0$ for all $Z^\bt \in \KKbp A$ with
            ${Z^\ast_\bt \in \mathrm{K}^b(A\text{-}\inj)}$.
\item[(2')] If $\HH^\bt(F^\bt) = 0$, then $\Hom_{\KKp A}(Z^\bt, F^\bt) = 0$ for all $Z^\bt \in \KKbp A$.
\item[(3)] If $F^\bt \in \LL_A$, then $F^\bt \in \HP A \,$.
\end{itemize}
\end{Lemma}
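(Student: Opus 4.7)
The plan is to deduce each of $(1)$, $(2)$, $(1')$, $(2')$ from the corresponding pointwise statement in \cref{Lem:Zero-Hom_At-exact-Pos} by an induction on the number of non-zero components of $Z^\bt$, and then to combine the two halves of this technique in order to handle $(3)$. For the base case of each of the first four parts, $Z^\bt$ is a single module sitting in some degree $k$, and the assertion reduces to the matching part of \cref{Lem:Zero-Hom_At-exact-Pos}. For the inductive step, if $Z^\bt$ is concentrated in an interval $[a,b]$ with $a < b$, the stupid truncation supplies a termwise split short exact sequence
\[
0 \to \sigma_{\geqslant a+1} Z^\bt \to Z^\bt \to Z^a[-a] \to 0,
\]
and hence a distinguished triangle in the ambient homotopy category. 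Applying $\Hom(F^\bt, -)$ for $(1)$ and $(2)$, or $\Hom(-, F^\bt)$ for $(1')$ and $(2')$, produces a long exact sequence whose two outer terms vanish by the base case and by the induction hypothesis, forcing the middle term to vanish as well.

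For $(3)$, fix $F^\bt \in \LL_A$. Membership in $\Hp A$ is immediate from the definition of $\LL_A$ by taking $l = r = 0$. To show that $F^\bt$ lies in $\KPperp A$, I would fix $Z^\bt \in \KP A$ and induct on the number of non-zero components of $Z^\bt$. One cannot invoke $(1)$ or $(2)$ directly, since these would require full vanishing of $\HH^\bt(F^\bt)$ or of $\HH_\bt(F_\bt^\ast)$, neither of which is assumed. Instead, the decisive point is that every component $Z^k$ of a complex in $\KP A$ is simultaneously projective and injective, and one selects the half of the $\LL_A$ conditions that matches the sign of the degree $k$. Concretely, in the base case $Z^\bt = Z[-k]$ with $Z \in \PP_A$: if $k < 0$, then $\HH^k(F^\bt) = 0$ and \cref{Lem:Zero-Hom_At-exact-Pos}\,$(1)$ applies via the injectivity of $Z$; if $k \geqslant 0$, then $\HH_k(F_\bt^\ast) = 0$ and \cref{Lem:Zero-Hom_At-exact-Pos}\,$(2')$ applies via the projectivity of $Z$. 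The inductive step uses the same stupid-truncation triangle as before.

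The argument is not conceptually difficult; the main items that need care are the indexing bookkeeping in $(3)$, which determines which half of the $\LL_A$ vanishing is invoked in the base case, and the compatibility of $\Hom$ with the duality $(-)^\ast$ that lets $(1')$ and $(2')$ reduce to pointwise statements on the left-module side, exactly as already exploited in the proof of \cref{Lem:Zero-Hom_At-exact-Pos}.
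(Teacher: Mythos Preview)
Your proposal is correct and follows essentially the same architecture as the paper: induction on the number of non-zero terms of $Z^\bt$, with the base case supplied by \cref{Lem:Zero-Hom_At-exact-Pos}, and part $(3)$ handled by choosing the injective or projective half of that lemma according to the sign of the degree. The only difference is cosmetic: the paper carries out the inductive step by an explicit homotopy construction (packaged as a single claim that $\Hom(F^\bt,Z^\bt)=0$ whenever $\Hom(F^\bt,Z^k)=0$ for all $k$), whereas you invoke the stupid-truncation triangle and the long exact sequence of $\Hom$; these are interchangeable implementations of the same reduction.
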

\begin{proof}
At first we show the following claim.

\textit{Claim.} Let $Z^\bt \in \KKb A$.
We have $\Hom_{\KK A}(F^\bt, Z^\bt) = 0$ if $\Hom_{\KK A}(F^\bt, Z^k) = 0$ for all $k \in \ZZ$.

Suppose that $Z^\bt$ is non-zero. Since $Z^\bt$ is bounded, there exists an $l \in \ZZ$ such that $Z^l \neq 0$ and 
$Z^k = 0$ for $k < l$.
Moreover, there exists an $r \in \ZZ$ such that $Z^r \neq 0$ and $Z^k = 0$ for $k > r$. 

We proceed by induction on the number of non-zero terms of $Z^\bt$.

Suppose that $l = r$. Then $Z^\bt = Z^l$ and $\mathrm{Hom}_{\KK{A}}(F^\bt, Z^l) = 0$ by assumption.

Suppose that $l < r$. By induction, we can assume that $\Hom_{\KK{A}}(F^\bt, Z^{\leqslant r-1}) = 0$.
Thus, there exist homotopy maps $h^{k} : {F^{k} \to Z^{k-1}}$ for $k \leq r$ such that
\[
h^k \, d_Z^{k-1} + d_F^k \, h^{k+1} = f^k, \quad \mathrm{ for }\; k < r.
\]
Consider the following diagram.
\[
\begin{tikzcd}[row sep=1.5cm, column sep=1.5cm]
\cdots \ar[r] & F^{r-2} \ar[r, "d_F^{\,r-2}"] \ar[d, "f^{r-2}"] & 
				F^{r-1} \ar[r, "d_F^{\,r-1}"] \ar[d, "f^{r-1}"] \ar[dl, "h^{r-1\;\;}" above, near start] & 
				F^{r} \ar[r, "d_F^{\,r}"] \ar[d, "f^{r}"] \ar[dl, "h^r" above, near start] & 
				F^{r+1} \ar[r]\ar[d] \ar[dl, dashed, "h^{r+1\;\;}" above, near start] & 
					\cdots & F^\bt \ar[d, "f^\bt"] \\
\cdots \ar[r] & Z^{r-2} \ar[r, "d_Z^{\,r-2}"] & Z^{r-1} \ar[r, "d_Z^{\,r-1}"]  & Z^{r} \ar[r, "d_Z^{\,r}"] & 
		0 \ar[r] & \cdots & Z^\bt
\end{tikzcd}
\]
Note that
\[
d_F^{\,r-1} \, \left( f^r - h^r \, d_Z^{\,r-1} \right) 
	= f^{r-1} \, d_Z^{\,r-1} - f^{r-1} \, d_Z^{\,r-1} + h^{r-1} \, d_Z^{\,r-2} \, d_Z^{\,r-1} = 0
\]
so that $f^r - h^r \, d_Z^{\,r-1}$ induces a morphism of complexes $F^\bt \rightarrow Z^r$.
However, $\Hom_{\KK{A}}(F^\bt, Z^r) = 0$ by assumption.
This yields a homotopy map $h^{r+1} : F^{r+1} \to Z^r$ with 
\[
d_F^{\,r} \, h^{r+1} = f^r - h^r \, d_Z^{\,r-1}
\; \Leftrightarrow \;
d_F^{\,r} \, h^{r+1} + h^r \, d_Z^{\,r-1} = f^r.
\]
In conclusion, we obtain $\Hom_{\KK{A}}(F^\bt, Z^\bt) = 0$.

By \cref{Lem:Zero-Hom_At-exact-Pos}.$(1,2')$, we have $\Hom_{\KK A}(F^\bt, Z^k) = 0$ for $k \in \ZZ$ 
in the situation of part $(1)$ and $(2)$ respectively. Hence, part (1) and (2) follow from the claim above.
Note that for $Z^\bt \in \KKp A$, we have $\Hom_{\KKp A}(Z^\bt, F^\bt) = 0$ if and only if 
$\Hom_{\KKpleft{A}}(F_\bt^\ast, Z_\bt^\ast) = 0$.
Thus, $(1')$ and $(2')$ follow from the versions of $(1)$ and $(2)$ for left $A$-modules respectively. 

Finally, let $F^\bt \in \LL_A$ and $Z^\bt \in \KP A$. By definition, we have $\HH^k(F^\bt) = 0$ for $k < 0$ and 
$\HH_k(F_\bt^\ast) = 0$ for $k \geq 0$. In particular, $F^\bt \in \Hp A$.
Furthermore, \cref{Lem:Zero-Hom_At-exact-Pos}.$(1)$ shows $\Hom_{\KKp A}(F^\bt, Z^k) = 0$ for $k < 0$ and
\cref{Lem:Zero-Hom_At-exact-Pos}.$(2')$ shows that $\Hom_{\KKp A}(F^\bt, Z^k) = 0$ for $k \geq 0$.
Now, the claim above gives $\Hom_{\KKp A}(F^\bt, Z^\bt) = 0$ so that $F^\bt \in \KPperp A$.
Together, we obtain $F^\bt \in \HP A$ which shows part (3).
\end{proof}

The next several results aim to show that a complex $F^\bt \in \HP A$ 
is contained in any triangulated subcategory of $\KKp A$ that contains $\LL_A$ and is closed under isomorphisms.
We start with the following important observation about complexes in $\HP A$ and $\Hstp A$.

\begin{Lemma} \label{Lem:H-iff-Complex_in-perp}
Let $F^\bt \in \KK{A}$.
\begin{itemize}
\item[(1)] $F^\bt \in \KPperp A$ if and only if $\HH^k(F^\bt) \in \Pperp_A$ for all $k \in \ZZ$.
\item[(2)] $F^\bt \in \Kstperp A$ if and only if $\HH^k(F^\bt) \in \stperp A$ for all $k \in \ZZ$.
\item[(1')] $F^\bt \in \mathcal{K}^b(\nu^{-1}\PP_A)^\perp$ if and only if $\HH^k(F^\bt) \in (\nu^{-1}\PP_A)^{\perp}$ for all $k \in \ZZ$.
\item[(2')] $F^\bt \in \Kstp A^\perp$ if and only if $\HH^k(F^\bt) \in (\stp A)^\perp$ for all $k \in \ZZ$.
\end{itemize}
\end{Lemma}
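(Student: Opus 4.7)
The plan is to derive all four biconditionals from a pair of natural isomorphisms that identify Hom from or to a stalk complex with a Hom on the corresponding cohomology module. Concretely, for any $k \in \ZZ$ and any injective $A$-module $Z$ one has
\[
\Hom_{\KK A}(F^\bt, Z[-k]) \simeq \Hom_A(\HH^k(F^\bt), Z),
\]
and dually, for any projective $A$-module $W$,
\[
\Hom_{\KK A}(W[-k], F^\bt) \simeq \Hom_A(W, \HH^k(F^\bt)).
\]
Both are obtained by a direct computation. A chain map $F^\bt \to Z[-k]$ is a morphism $f^k : F^k \to Z$ vanishing on $\im d_F^{k-1}$, and null-homotopy means $f^k$ factors through $d_F^k$; injectivity of $Z$ applied to the short exact sequence $0 \to \HH^k(F^\bt) \to F^k/\im d_F^{k-1} \to \im d_F^k \to 0$ identifies null-homotopic maps with those vanishing on $\HH^k(F^\bt)$. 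The dual formula follows symmetrically by applying projectivity of $W$ to $0 \to \im d_F^{k-1} \to \ker d_F^k \to \HH^k(F^\bt) \to 0$.

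The forward direction of each part is then uniform. If $F^\bt$ lies in the perpendicular of $\mathcal{K}^b(\mathcal{X})$ for the relevant class $\mathcal{X}$, then in particular $\Hom_{\KK A}(F^\bt, Z[-k]) = 0$ (or its dual) for every $Z \in \mathcal{X}$ and every $k$, since each stalk $Z[-k]$ is a bounded complex in $\mathcal{K}^b(\mathcal{X})$. The appropriate isomorphism above immediately produces membership of $\HH^k(F^\bt)$ in the appropriate module perpendicular. For parts (1) and (2) one uses that $\PP_A$ and $\stp A$ consist of injectives; for (1') and (2') one uses that $\nu^{-1}\PP_A$ and $\stp A$ consist of projectives.

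For the backward direction, assume $\HH^k(F^\bt)$ lies in the prescribed module perpendicular for every $k$. The stalk isomorphisms already give $\Hom_{\KK A}(F^\bt, Z[-k]) = 0$, respectively $\Hom_{\KK A}(Z[-k], F^\bt) = 0$, for every stalk complex with $Z \in \mathcal{X}$. To upgrade this to arbitrary bounded $Z^\bt \in \mathcal{K}^b(\mathcal{X})$, I would recycle the inductive claim from the proof of \cref{Lem:L-is-in-K_perp}, which reduces a Hom out of $F^\bt$ into a bounded complex to its stalks by stupidly truncating on the right and successively modifying the chain map with partial homotopies. For parts (1') and (2') the mirrored induction applies: truncate $Z^\bt$ stupidly on the left and assemble the chain homotopy from the lowest nonzero degree upward.

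The main obstacle is the dual induction underlying (1') and (2'), since the claim in the proof of \cref{Lem:L-is-in-K_perp} is only written in one direction. One has to reverse the bookkeeping, so that after modifying a chain map $Z^\bt \to F^\bt$ by a partial homotopy coming from the leftmost truncation, the remaining residue is genuinely a map from a stalk, with the homotopy identity involving the differential of $F^\bt$ from the correct side. Once this mirrored version is in hand, the proof of the lemma reduces to four straightforward applications of the two stalk-Hom isomorphisms recorded above.
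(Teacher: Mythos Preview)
Your proposal is correct and follows essentially the same strategy as the paper: reduce to stalk complexes, handle stalks via the injectivity (resp.\ projectivity) of the target (resp.\ source), and pass to arbitrary bounded complexes by induction on the number of nonzero terms. The differences are cosmetic: you factor out the stalk-Hom isomorphisms $\Hom_{\KK A}(F^\bt,Z[-k])\simeq\Hom_A(\HH^k(F^\bt),Z)$ and its dual as standalone facts and then cite the inductive claim from \cref{Lem:L-is-in-K_perp}, whereas the paper embeds both the stalk computation and the induction directly into the argument; for (1') and (2') the paper transports the problem to left modules via the $k$-duality $\Du$ rather than writing out the mirrored induction you describe, but either route works.
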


\begin{proof} 
Let $\mathcal{I}$ be a full subcategory of $\inj A$.
We show that we have $F^\bt \in {}^\perp\mathcal{K}^b(\mathcal{I})$ if and only if 
$\HH^k(F^\bt) \in {}^\perp\mathcal{I}$ for all $k \in \ZZ$.
Letting $\mathcal{I} = \PP_A$ we obtain part (1) and letting $\mathcal{I} = \stp A$ we obtain part (2).

Suppose that $F^\bt \in {}^\perp\mathcal{K}^b(\mathcal{I})$. We fix a $k \in \ZZ$ with $\HH^k(F^\bt) \neq 0$. 
Let $Z \in \mathcal{I}$ and suppose given a morphism $f : \HH^k(F^\bt) \to Z$.

Consider the following commutative diagram. The morphism $\alpha$ exists since $Z$ is injective.
\[
\begin{tikzcd}[row sep=.9cm, column sep=.7cm]
\cdots \ar[r] & F^{k-1} \ar[r, "d^{k-1}"] &  F^k \ar[r, "d^k"] \ar[dr, twoheadrightarrow, "\pi"] & F^{k+1} \ar[r] & \cdots \\
 & \ker{d^{k}} \ar[ur, hookrightarrow, "\iota"] \ar[dr, twoheadrightarrow, "p"] & & F^k /\, \im{d^{k-1}} \ar[ddl, dashed, "\alpha"]\\
 & & \HH^k(F^\bt) \ar[ur, hookrightarrow] \ar[d, "f"] \\
 & & Z
\end{tikzcd}
\]
Since $d^{k-1} \, \pi \, \alpha = 0$, this yields a morphism of complexes $\pi \, \alpha : F^\bt \to Z[-k]$.
By assumption, there exists a homotopy map $h : F^{k+1} \to Z$ with $d^k \, h = \pi \, \alpha$.
We have
\[
0 = \iota \, d^k \, h = \iota \, \pi \, \alpha = p \, f
\]
so that $f = 0$.

Conversely, let $\HH^k(F^\bt) \in {}^\perp\mathcal{I}$ for all $k \in \ZZ$. 
Suppose given a morphism of complexes $F^\bt \xrightarrow{f^\bt} Z^\bt$ with $Z^\bt$ in $\mathcal{K}^b(\mathcal{I})$.
Let $r \in \ZZ$ be maximal such that $Z^r \neq 0$. By applying a shift $[-r]$ we may assume that $r = 0$.
We show that $f^\bt = 0$ by induction on the number of non-zero terms of $Z^\bt$.

Suppose that $Z^k = 0$ for $k \neq 0$. We have $d_F^{-1} f^0 = f^{-1}\, d_Z^{-1} = 0$ so that 
there exists a morphism $\alpha : F^0 /\, \im{d_F^{k-1}} \to H^0(F^\bt)$
with $f^0 = \pi\, \alpha$. This results in a morphism $g = i\,\alpha : \HH^0(F^\bt) \to Z^0$
such that the following diagram commutes.
\[
\begin{tikzcd}[row sep=.9cm, column sep=.7cm]
\cdots \ar[r] & F^{-1} \ar[r, "d_F^{-1}"] &  F^0 \ar[r, "d_F^0"] \ar[dr, twoheadrightarrow, "\pi"] \ar[ddd, bend right=35, "f^0" near start] & F^{1} \ar[r] & \cdots \\
 & \ker{d^{0}} \ar[ur, hookrightarrow, "\iota"] \ar[dr, twoheadrightarrow, crossing over, "p" near start] & & F^0 /\, \im{d_F^{-1}} \ar[ddl, dashed, "\alpha"]\\
 & & \HH^0(F^\bt) \ar[ur, hookrightarrow, "i"] \ar[d, dashed, "g" near start] \\
\cdots \ar[r] & 0 \ar[r, "d_Z^{-1}"] & Z^0 \ar[r, "d_Z^0"] & 0 \ar[r] & \cdots 
\end{tikzcd} 
\]
By assumption, $g = 0$. Hence $\iota \, f^0 = \iota \, \pi \, \alpha = p\,i\,\alpha =  p \, g = 0$.
This yields a morphism of complexes as follows.
\[
\begin{tikzcd}[row sep=.9cm, column sep=.7cm]
\cdots \ar[r] & 0 \ar[r] \ar[d] & \ker{d^0} \ar[r, "\iota"] \ar[d] &  F^0 \ar[r, "d^0"] \ar[d, "f^0"] & F^{1} \ar[r] \ar[d] & \cdots \\
\cdots \ar[r] & 0 \ar[r] & 0 \ar[r] & Z^0 \ar[r] & 0 \ar[r] & \cdots 
\end{tikzcd}
\]
Since $Z^0$ is injective, this morphism must be zero in $\KK A$ by \cref{Lem:Zero-Hom_At-exact-Pos}.(1)
so that there exists a morphism $h : F^1 \to Z^0$ with $d^0 h = f^0$.
This implies that $f^\bt : F^\bt \to Z^\bt$ is zero as well.

For the induction step, we consider the complex $\tau_{< 0} Z^\bt = Z^{<0}$. By induction hypothesis, we may assume that
${\Hom_{\KK{A}}(F^\bt, Z^{< 0}) = 0}$.
Hence there exist homotopy maps $h^k : F^k \to Z^{k-1}$ for $k \leq 0$ such that 
$h^{k-1} \, d_Z^{k-2} + d_F^{k-1} h^{k} = f^{k-1}$.
\[
\begin{tikzcd}[row sep=1.5cm, column sep=1.5cm]
\cdots \ar[r] & F^{-2} \ar[r, "d_F^{-2}"] \ar[d, "f^{-2}"] & 
				F^{-1} \ar[r, "d_F^{-1}"] \ar[d, "f^{-1}"] \ar[dl, "h^{-1}" above] & 
				F^{0} \ar[r, "d_F^0"] \ar[d, "f^{0}"] \ar[dl, "h^0" above] & 
				F^1 \ar[r,] \ar[d, "f^1"] & 
				\cdots  \\
\cdots \ar[r] & Z^{-2} \ar[r, "d_Z^{-2}"] & Z^{-1} \ar[r, "d_Z^{-1}"]  & Z^{0} \ar[r] & 
		0 \ar[r] & \cdots 
\end{tikzcd}
\]
Note that 
\[
d_F^{-1} \, \left( f^0 - h^0 \, d_Z^{-1} \right) 
	= f^{-1} \, d_Z^{-1} - f^{-1} \, d_Z ^{-1} + h^{-1} \, d_Z^{-2} \, d_Z^{-1} = 0
\]
so that $f^0 - h^0 \, d_Z^{-1}$ induces a morphism of complexes $F^\bt \rightarrow Z^0$.

Now we are in the same situation as above and we can conclude that there exists a morphism $F^1 \xrightarrow{h^1} Z^0$ such that $d_F^0 h^1 = f^0 - h^0 \, d_Z^{-1}$.
However, this yields $h^0 \, d_Z^{-1} + d_F^0 h^1 = f^0$. Thus $f^\bt = 0$.

It remains to show part (1') and part (2'). Let $\mathcal{C}$ be either $\nu^{-1}\PP_A$ or $\stp A$.
In both cases, $\mathcal{I} := \Du \mathcal{C}$ is a full subcategory of $A$-inj.
In particular, we can apply the arguments above for $\mathcal{I}$.

Suppose given $Z^\bt \in \mathcal{K}^b(\mathcal{C})$.
Note that $\Du Z^\bt \in \mathcal{K}^b(\mathcal{I})$ and
\[
\Hom_{\KK A}(Z^\bt,F^\bt) \simeq \Hom_{\KKleft A}(\Du F^\bt, \Du Z^\bt).
\]
Thus, the arguments above for left $A$-modules show that there exists a $Z^\bt \in \mathcal{K}^b(\mathcal{C})$
which satisfies $\Hom_{\KK A}(Z^\bt,F^\bt) \neq 0$ if and only if there is a ${}_A Z \in \mathcal{I} $ with 
$\Hom_{A}({}_A\HH^k(\Du F^\bt), {}_A Z) \neq 0$. Since $\Du(-)$ is exact, we have 
\[
\Hom_{A}({}_A \HH^k(\Du F^\bt), {}_A Z) \simeq \Hom_{A}({}_A(\Du\HH^k(F^\bt)), {}_A Z) \simeq \Hom_{A}((\Du Z)_A,\HH^k(F^\bt)_A).
\]
Using that $\Du Z \in \Du\mathcal{I} \simeq \mathcal{C}$, we are done.
\end{proof}

For a given complex $F^\bt \in \Hp A$, we want to construct a complex in $\LL_A$ which is related to
$F^\bt$ via distinguished triangles; cf.\ \cref{Lem:Reduction-To-ProjRes}.
This is done by removing non-zero cohomology of $F^\bt$ with projective resolutions.
Using the boundary conditions in the definition of $\Hp A$, there are only finitely many positions we have to consider
until we arrive at a complex in $\LL_A$.
The distinguished triangles that arise during the proof also give a way to calculate the class of $F^\bt$
in the Grothendieck group of $\HP A$.

Because it will be needed later, we first state the induction step in a more general lemma.
\begin{Lemma} \label{Lem:RemovingCohomologyOfF}
Suppose given $F^\bt \in \Hp A$.
Let $k \in \ZZ$ be minimal with $\HH^k(F^\bt) \neq 0$.
Let $H$ be a submodule of $\HH^k(F^\bt)$ with $P^\bt$ a projective resolution of $H$.
Then there exists a distinguished triangle
$P^\bt[-k] \to F^\bt \to C^\bt \to$
such that the following holds.
\begin{itemize}
\item[(1)] We have $\HH^j(C^\bt) = 0$ for $j < k$ and $\tau_{\geqslant k} C^\bt = \tau_{\geqslant k} F^\bt$.

\item[(2)] There is a short exact sequence $0 \to H \to \HH^k(F^\bt) \to \HH^k(C^\bt) \to 0$.
           In particular, we have $\HH^k(C^\bt) = 0$ if $H = \HH^k(F^\bt)$.
           
\item[(3)] Suppose that $F^\bt \in \LL_A$ with $k = 0$. Then we also have $C^\bt \in \LL_A$. 
           In this setting, we have a short exact sequence $0 \to H \to \HH^0(\tleq F^\bt) \to \HH^0(\tleq C^\bt) \to 0$.
\end{itemize}
\end{Lemma}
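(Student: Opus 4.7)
My plan is to construct a chain map $\phi^\bt : P^\bt[-k] \to F^\bt$ realizing the inclusion $H \hookrightarrow \HH^k(F^\bt)$ on cohomology in degree $k$, then set $C^\bt := \mathrm{Cone}(\phi^\bt)$. All three assertions will then follow from the long exact cohomology sequence of the resulting triangle and from its dualized analog.

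For the construction, I build $\phi^\bt$ degree by degree. At cochain degree $k$, projectivity of $P^0 = (P^\bt[-k])^k$ together with the surjection $\ker d_F^k \twoheadrightarrow \HH^k(F^\bt)$ lets me lift the composite $P^0 \twoheadrightarrow H \hookrightarrow \HH^k(F^\bt)$ to a morphism $\phi^k : P^0 \to \ker d_F^k \subseteq F^k$. For $j > k$ I set $\phi^j = 0$ since $(P^\bt[-k])^j = 0$. For $j < k$ I proceed by downward induction. The delicate case is $j = k-1$: the composite $P^{-1} \to P^0 \to H$ vanishes since $P^\bt$ is a projective resolution of $H$, so $\phi^k \circ d_P^{-1}$ lands in $\im d_F^{k-1}$, and projectivity of $P^{-1}$ then yields $\phi^{k-1}$. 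For $j < k-1$ the usual lifting argument works, using $\HH^{j+1}(F^\bt) = 0$ from the minimality of $k$. Taking the mapping cone then gives a distinguished triangle $P^\bt[-k] \to F^\bt \to C^\bt \to$ in $\KKp A$.

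Parts (1) and (2) follow from the long exact cohomology sequence. Since $\HH^j(P^\bt[-k])$ equals $H$ for $j = k$ and vanishes otherwise, and since $\HH^{<k}(F^\bt) = 0$, the sequence forces $\HH^j(C^\bt) = 0$ for $j < k-1$, gives $\HH^{k-1}(C^\bt) = 0$ from the injectivity of $H \hookrightarrow \HH^k(F^\bt)$, and yields the short exact sequence $0 \to H \to \HH^k(F^\bt) \to \HH^k(C^\bt) \to 0$ in degree $k$. For the truncation identity, note that for $j \geqslant k$ one has $(P^\bt[-k])^{j+1} = 0$, so $C^j = F^j$ with the same differential; consequently $\tau_{\geqslant k} C^\bt = \tau_{\geqslant k} F^\bt$.

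For part (3), assume $F^\bt \in \LL_A$ and $k = 0$; only $\HH_{\geqslant 0}(C_\bt^\ast) = 0$ needs verification. Applying $(-)^\ast$ to the degreewise-split exact sequence $0 \to F^\bt \to C^\bt \to P^\bt[1] \to 0$ gives an exact sequence $0 \to (P^\bt[1])^\ast \to C_\bt^\ast \to F_\bt^\ast \to 0$ in $\CCpleft A$. Its long exact homology sequence, combined with $\HH_n(F_\bt^\ast) = 0$ for $n \geqslant 0$ and with $\HH_n((P^\bt[1])^\ast) = \HH_{n+1}(P_\bt^\ast) = 0$ for $n \geqslant 0$ (because $P_\bt^\ast$ is concentrated in chain degrees $\leqslant 0$), forces $\HH_n(C_\bt^\ast) = 0$ for $n \geqslant 0$. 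The asserted short exact sequence then follows by direct computation: $\HH^0(\tleq C^\bt) = F^0 / (\im \phi^0 + \im d_F^{-1}) = \HH^0(\tleq F^\bt)/H$, where the image of $\phi^0$ modulo $\im d_F^{-1}$ equals $H$ by the construction of $\phi^0$. The main obstacle is the construction of $\phi^{k-1}$, where the standard exactness argument fails because $\HH^k(F^\bt) \neq 0$; the essential trick is that $\phi^k$ was chosen to factor through $H \subseteq \HH^k(F^\bt)$ rather than all of $\HH^k(F^\bt)$.
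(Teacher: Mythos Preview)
Your proof is correct and follows essentially the same approach as the paper: construct a chain map from the shifted projective resolution into $F^\bt$ inducing the inclusion $H \hookrightarrow \HH^k(F^\bt)$, take its cone, and read off (1)--(3) from the long exact sequences in cohomology and (after applying $(-)^\ast$) in homology. The only cosmetic difference is that the paper lifts $H \hookrightarrow \Cok d_F^{k-1}$ along the acyclic truncation $\tau_{\leqslant k}F^\bt \twoheadrightarrow \Cok d_F^{k-1}$ in one stroke, whereas you lift into $\ker d_F^k$ and then build $\phi^{<k}$ by explicit downward induction; for the short exact sequence in (3) the paper uses a $3\times 3$ diagram while you compute $\HH^0(\tleq C^\bt) = F^0/(\im\phi^0 + \im d_F^{-1})$ directly, which is equivalent.
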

\begin{proof}
We have an injective morphism $f : H \to \Cok{d_F^{k-1}}$ since $H$ is a submodule of $\HH^k(F^\bt)$ 
which embedds into $\Cok d_F^{k-1}$. Since $k$ is minimal such that $\HH^k(F^\bt) \neq 0$, we know that
$\HH^j(F^\bt) = 0$ for $j < k$. Hence, we can lift $f$ to a morphism of complexes 
$f^\bt : P^{\leqslant 0}[-k] \to F^\bt$.
In particular, we have $f^0\, d_F^k = 0$ since $f$ factors through $\HH^k(F^\bt)$.
\[
\begin{tikzcd}
\cdots \ar[r] & P^{-2} \ar[r, "d_P^{-2}"] \ar[d, "f^{-2}"] & P^{-1} \ar[r, "d_P^{-1}"] \ar[d, "f^{-1}"] & 
		P^0 \ar[r] \ar[d, "f^0"] & 0 \ar[r] \ar[d] & 0 \ar[r] \ar[d] & \cdots & P^{\leqslant 0}[-k] \ar[d,"f^\bt"] \\
\cdots \ar[r] & F^{k-2} \ar[r, "d_F^{k-2}"] & F^{k-1} \ar[r, "d_F^{k-1}"] & F^k \ar[r, "d_F^k"] & 
		F^{k+1} \ar[r, "d_F^{k+1}"] & F^{k+2} \ar[r] & \cdots & F^\bt
\end{tikzcd}
\] 
We write $C^\bt := C(f)^\bt$ for the mapping cone. 
The distinguished triangle $P^\bt[-k] \to F^\bt \to C^\bt \to$ induces a long exact sequence of cohomology.
\[
\cdots \to \HH^{j-1}(F^\bt) \to \HH^{j-1}(C^\bt) \to \HH^j(P^\bt[-k]) \to \HH^j(F^\bt) \to \HH^j(C^\bt) \to \HH^{j+1}(P^\bt[-k]) \to \cdots
\]
Since $\HH^{<k-1}(F^\bt) = 0$ and $\HH^{<k}(P^\bt[-k]) = 0$, we obtain that $\HH^{<k-1}(C^\bt) = 0$.
Using that $\HH^{k-1}(F^\bt) = 0$ and $\HH^k(P^\bt[-k]) \simeq H$ embedds into $\HH^k(F^\bt)$, we also have $\HH^{k-1}(C^\bt) = 0$.
In particular, we obtain the short exact sequence
$0 \to H \to \HH^k(F^\bt) \to \HH^k(C^\bt) \to 0$ since $\HH^{k+1}(P^\bt[-k]) = 0$. This shows part (1) and (2).

For part (3), suppose that $F^\bt \in \LL_A$ and $k = 0$.
We have that $\HH^{< 0}(C^\bt) = 0$ by part (1). The distinguished triangle $P^\bt \to F^\bt \to C^\bt \to$
from above induces a componentwise split exact sequence $0\to P_\bt^\ast[1] \to C_\bt^\ast \to F_\bt^\ast \to 0$.
Consider the long exact sequence of homology.
\[
\cdots \to \HH_{j}(P_\bt^\ast[1]) \to \HH_{j}(C_\bt^\ast) \to \HH_{j}(F_\bt^\ast) \to \cdots
\] 
Thus, we have $\HH_{\geq 0}(C_\bt^\ast) = 0$ and we obtain $C^\bt = C(f)^\bt \in \LL_A$.
Since $\tau_{\geqslant 0} C^\bt = \tau_{\geqslant 0} F^\bt$, we have 
\[
N:= \Cok\!\big(\HH^0(F^\bt) \hookrightarrow \HH^0(\tleq F^\bt)\big) \simeq \im(d_F^0) \simeq \im(d_C^0) \simeq  \Cok\!\big(\HH^0(C^\bt) \hookrightarrow \HH^0(\tleq C^\bt)\big).
\]
Let $\tilde{H} \simeq \Ker\!\big(\HH^0(\tleq F^\bt) \to \HH^0(\tleq C^\bt)\big)$.
Consider the following commutative diagram with exact rows and columns.
\[ 
\begin{tikzcd}[row sep = 5mm, column sep = 6mm]
         & 0 \ar[d]                & 0 \ar[d]                         & 0 \ar[d]                  &   \\
0 \ar[r] & H \ar[r] \ar[d]         & \HH^0(F^\bt) \ar[r] \ar[d]       & \HH^0(C^\bt) \ar[r] \ar[d]       & 0 \\
0 \ar[r] & \tilde{H} \ar[r] \ar[d] & \HH^0(\tleq F^\bt) \ar[r] \ar[d] & \HH^0(\tleq C^\bt) \ar[r] \ar[d] & 0 \\
         & 0 \ar[r] \ar[d]         & N \ar[r, equal] \ar[d]           & N \ar[r] \ar[d]                  & 0 \\
         & 0                       & 0                                & 0                                & 
\end{tikzcd}
\]
We obtain $H \simeq \tilde{H}$ and the short exact sequence $0 \to H \to \HH^0(\tleq F^\bt) \to \HH^0(\tleq C^\bt) \to 0$.
\end{proof}

\begin{Lemma}\label{Lem:Reduction-To-ProjRes}
Let $\TT$ be a triangulated subcategory of $\KK{A}$ that contains $\LL_A$ and is closed under isomorphisms.
The following holds for a complex $F^\bt \in \Hp A$ with integers $l\leq r$ such that $\HH^{<l}(F^\bt) = 0$ and 
$\HH_{\geqslant r}(F_\bt^\ast) = 0$.
\begin{itemize}
\item[(1)] If $\TT$ contains the minimal projective resolution of $\HH^k(F^\bt)$ for all $k < r$,
		   then $F^\bt \in \TT$.
\item[(2)] We abbreviate $G^\bt := \FF(\HH^r(\tau_{\leq r}F^\bt)) \in \LL_A$. 
           For $l \leq k < r$ let $P^\bt_k$ be the minimal projective resolution of $\HH^k(F^\bt)$.
           We have $[F^\bt] = \sum_{k=l}^{r-1} (-1)^k [P^\bt_k] + (-1)^r[G^\bt]$ in $G_0(\mathcal{\Hp A})$.
\end{itemize}
\end{Lemma}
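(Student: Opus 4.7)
The plan is to prove both parts simultaneously by induction on $r-l$, using \cref{Lem:RemovingCohomologyOfF} at each stage to strip away one cohomology group in the lowest degree and produce a distinguished triangle that relates $F^\bt$ to a shifted projective resolution and a complex with a narrower cohomological range.

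In the base case $r=l$, the hypotheses $\HH^{<r}(F^\bt)=0$ and $\HH_{\geqslant r}(F_\bt^\ast)=0$ translate via the shift formulas $\HH^k(F^\bt[r])=\HH^{k+r}(F^\bt)$ and $\HH_k((F^\bt[r])_\bt^\ast)=\HH_{k+r}(F_\bt^\ast)$ into the statement that $F^\bt[r]\in\LL_A$. Using the quasi-inverse $\FF^{-1}=\HH^0(\tleq -)$ of \cref{Thm:Equivalence_F}, one identifies $F^\bt[r]\simeq\FF(\HH^r(\tau_{\leqslant r}F^\bt))=G^\bt$, hence $F^\bt\simeq G^\bt[-r]$. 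Part~(1) is then immediate since $\TT\supseteq\LL_A$ is closed under shifts and isomorphisms, and part~(2) reduces to $[G^\bt[-r]]=(-1)^r[G^\bt]$, with the empty sum matching the formula.

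For the induction step, I apply \cref{Lem:RemovingCohomologyOfF} with $k=l$ and $H=\HH^l(F^\bt)$ (the case $H=0$ is absorbed: then $P_l^\bt=0$ and the triangle degenerates but the bound still improves). This yields a distinguished triangle $P_l^\bt[-l]\to F^\bt\to C^\bt\to$ with $\HH^{<l+1}(C^\bt)=0$. I then verify $C^\bt\in\Hp A$ with bounds $(l+1,r)$: the left bound is immediate, and for the right bound I dualise the triangle and use the long exact sequence of homology together with the fact that $(P_l^\bt[-l])_\bt^\ast$ is concentrated in chain degrees $\leqslant l<r$, so $\HH_{\geqslant r}((P_l^\bt[-l])_\bt^\ast)=0$ forces $\HH_{\geqslant r}(C_\bt^\ast)=0$. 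The same long exact sequence gives $\HH^k(C^\bt)=\HH^k(F^\bt)$ for $k>l$, and \cref{Lem:RemovingCohomologyOfF}(1) gives $\tau_{\geqslant l}C^\bt=\tau_{\geqslant l}F^\bt$, whence $\HH^r(\tau_{\leqslant r}C^\bt)=\HH^r(\tau_{\leqslant r}F^\bt)$. Thus the minimal projective resolutions of $\HH^k(C^\bt)$ for $l+1\leqslant k<r$ are exactly the $P_k^\bt$, and the complex $G^\bt$ built from $C^\bt$ coincides with the one built from $F^\bt$. The induction hypothesis applied to $C^\bt$ on the range $[l+1,r)$ then gives $C^\bt\in\TT$ in the setting of part~(1), and the identity $[C^\bt]=\sum_{k=l+1}^{r-1}(-1)^k[P_k^\bt]+(-1)^r[G^\bt]$ for part~(2). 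Combining with $[F^\bt]=(-1)^l[P_l^\bt]+[C^\bt]$ and with $P_l^\bt[-l]\in\TT$ (as a shift of $P_l^\bt\in\TT$) closes the induction.

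The main technical obstacle is confirming that $C^\bt$ stays in $\Hp A$ at every step; this amounts to the careful dualisation of the triangle and the observation that duals of shifted projective resolutions are supported in the correct range of chain degrees, which is precisely what keeps the right-hand bound $r$ on dual homology from drifting during the induction.
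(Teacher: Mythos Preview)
Your proof is correct and follows essentially the same approach as the paper: induction on $r-l$, with the base case identifying $F^\bt[r]\in\LL_A$ and the induction step invoking \cref{Lem:RemovingCohomologyOfF} at the lowest cohomological degree to produce the triangle $P_l^\bt[-l]\to F^\bt\to C^\bt\to$ and then applying the induction hypothesis to $C^\bt$. The only cosmetic difference is that you justify $\HH_{\geqslant r}(C_\bt^\ast)=0$ via the long exact sequence of the dualised triangle, whereas the paper reads it off directly from $\tau_{\geqslant l}C^\bt=\tau_{\geqslant l}F^\bt$; both arguments are valid and equally short.
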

\begin{proof}
Suppose that $F^\bt \neq 0$. By definition of $\Hp A$, there always exist $l,r \in \ZZ$ 
with $\HH^{<l}(F^\bt) = 0$ and $\HH_{\geqslant r}(F_\bt^\ast) = 0$. 
We can choose $r \in \ZZ$ such that $l\leq r$.
We proceed by induction on $N := r-l$.

If $N = 0$, that is $l = r$, then $\HH^{< r}(F^\bt) = 0$ and $\HH_{\geqslant r}(F_\bt^\ast) = 0$. 
Thus, $F^\bt[r] \in \LL_A$ and we have $F^\bt \in \TT$.
Furthermore, $[F^\bt] = (-1)^r[F^\bt[r]] = (-1)^r[G^\bt]$ since 
$\HH^r(\tau_{\leq r}F^\bt) = \HH^0\!\big(\tau_{\leq 0}(F^\bt[r])\big)$.

We consider the case $N > 0$, that is $l < r$.
By \cref{Lem:RemovingCohomologyOfF}, we have a distinguished triangle
\[
P^\bt[-l] \to F^\bt \to C^\bt \to
\]
with $P^\bt$ the minimal projective resolution of $\HH^l(F^\bt)$.
Moreover, $\HH^j(C^\bt) = 0$ for $j \leq l$ and $\tau_{\geqslant l} C^\bt = \tau_{\geqslant l} F^\bt$.
In particular, this means $\HH^{<l+1}(C^\bt) = 0$ and $\HH_{\geqslant r}(C_\bt^\ast) = 0$.
Hence, by induction, we have $C^\bt \in \TT$ and $[C^\bt] = \sum_{k=l+1}^{r-1} (-1)^k [P^\bt_k] + (-1)^r[G^\bt]$
since $\HH^k(C^\bt) \simeq \HH^k(F^\bt)$ for $k \geq l+1$.
Using that $P^\bt[-l] \rightarrow F^\bt \rightarrow C^\bt \rightarrow$
is a distinguished triangle with $P^\bt \in \TT$, we conclude that $F^\bt \in \TT$ 
since $\TT$ is closed under isomorphisms.
Moreover, we have 
\belowdisplayskip=-10pt
\[
[F^\bt] = (-1)^l[P^{\bt}] + [C^\bt] = \sum_{k=l}^{r-1} (-1)^k [P^\bt_k] + (-1)^r[G^\bt].
\]
\end{proof}

Next, we note the following observation about morphisms ending in injective $A$-modules.
This will be used with $\mathcal{I} = \PP_A$ in this section, 
as well as with $\mathcal{I} = \stp A$ in \cref{Sec:Hstp}.
\begin{Lemma} \label{Lem:X_iff_CompFactor-in-P-perp}
Let $\mathcal{I}$ be a full subcategory of $\inj A$. The following are equivalent for $X \in \rmod A$.
\begin{itemize}
\item[(i)] $X \in {}^\perp \mathcal{I}$.

\item[(ii)] $S \in {}^\perp \mathcal{I}$ for every composition factor $S$ of $X$.
\end{itemize}
\end{Lemma}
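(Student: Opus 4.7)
The plan is to prove both implications directly by exploiting that every object of $\mathcal{I}$ is injective, so $\Hom_A(-,Z)$ is exact for $Z \in \mathcal{I}$, while $\Hom_A(-,Z)$ also enjoys the extension property along monomorphisms.

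For the implication (i) $\Rightarrow$ (ii), I would argue by contraposition. A composition factor $S$ of $X$ arises as a subquotient: there exist submodules $Y' \subseteq Y \subseteq X$ with $S \simeq Y/Y'$. Suppose $S \notin {}^\perp\mathcal{I}$, so there is a non-zero morphism $f \colon S \to Z$ with $Z \in \mathcal{I}$. Composing with the canonical surjection $Y \twoheadrightarrow Y/Y' \simeq S$ yields a non-zero morphism $Y \to Z$ (non-zero because the surjection hits every element of $S$). Since $Z$ is injective and $Y \hookrightarrow X$, this morphism extends to a non-zero morphism $X \to Z$, contradicting (i).

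For (ii) $\Rightarrow$ (i), I would induct on the composition length of $X$. The base case, where $X$ is simple, is immediate since $X$ is then its own composition factor. For the induction step, pick a short exact sequence
\[
0 \to Y \to X \to S \to 0
\]
with $S$ simple (for instance, $S$ a simple quotient of $X$). Applying $\Hom_A(-,Z)$ for any $Z \in \mathcal{I}$ yields an exact sequence
\[
0 \to \Hom_A(S,Z) \to \Hom_A(X,Z) \to \Hom_A(Y,Z) \to 0
\]
by injectivity of $Z$. The first term vanishes since $S$ is a composition factor of $X$, and the last term vanishes by the induction hypothesis applied to $Y$, whose composition factors form a subset of those of $X$. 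Hence the middle term vanishes, proving $X \in {}^\perp\mathcal{I}$.

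There is no real obstacle here; the only subtlety is noticing that injectivity of $Z$ plays a dual role in the two implications, providing exactness of $\Hom_A(-,Z)$ in one direction and the extension of morphisms along monomorphisms in the other.
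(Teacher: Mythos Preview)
Your proof is correct. For (i)$\Rightarrow$(ii) your argument is essentially the same as the paper's, only dualised in presentation: the paper realises a composition factor $S$ as a submodule of a quotient $X/M$ and extends $S\to Z$ along $S\hookrightarrow X/M$, whereas you realise $S$ as a quotient of a submodule $Y\subseteq X$ and extend $Y\to Z$ along $Y\hookrightarrow X$. Both use injectivity of $Z$ once, in the same way.

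For (ii)$\Rightarrow$(i) the paper takes a genuinely shorter route than your induction. Given a non-zero $f\colon X\to Z$, it picks a simple $S$ in the socle of $\im f\simeq X/\ker f$; then $S$ is a composition factor of $X$ and the composite $S\hookrightarrow\im f\hookrightarrow Z$ is non-zero, contradicting (ii). This avoids induction entirely and, notably, does not even use injectivity of $Z$ in this direction. Your inductive argument is perfectly valid and perhaps more systematic, but the paper's socle trick is a one-liner worth knowing.
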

\begin{proof}
\textit{Ad (i) $\Rightarrow$ (ii).} Let $S$ be a composition factor of $X$.
Suppose given an injective module $I$ together with a morphism $S \xrightarrow{f} I$.

Since $S$ is a composition factor of $X$, there exists a submodule $M$ of $X$ such that $S \hookrightarrow X/M$.
Using that $I$ is injective, we obtain a morphism $X/M \xrightarrow{g} I$ such that the following diagram commutes.
\[
\begin{tikzcd}
S \ar[r, hookrightarrow] \ar[d, "f"'] & X/M \ar[dl, dashed, "g"] \\
I
\end{tikzcd}
\]
If $f$ is non-zero then the composite map $X \twoheadrightarrow X/M \xrightarrow{g} I$ is non-zero as well.

\textit{Ad (ii) $\Rightarrow$ (i).}
Suppose given an $A$-module $I$ together with a non-zero morphism $f : X \to I$.

Let $S$ be in the socle of $\im f \simeq X/\ker f$. Then $S$ is a composition factor of $X$ and the composite
$S \hookrightarrow \im f \hookrightarrow I$ is non-zero.
\end{proof}

It remains to show that a projective resolution of a module in $\Pperp$ is an element of any
triangulated category that contains $\LL_A$ and is closed under isomorphisms.
Since a short exact sequence of $A$-modules induces a distinguished triangle of projective resolutions in $\KKp A$,
it suffices to consider simple modules.

\begin{Lemma} \label{Lem:ProjRes-if-CompFactors-in-T}
Let $\TT$ be a triangulated subcategory of $\KK{A}$ that is closed under isomorphisms.

Suppose $X$ is an $A$-module with minimal projective resolution $P^{\leqslant 0}$. The following holds.
\begin{itemize}
\item[(1)] If $\TT$ contains the minimal projective resolution of every composition factor $S$ of $X$,
 			 then we have $P^{\leqslant 0} \in \TT$.
 			 
\item[(2)] Let $n:=l(X)$ and suppose $Q^{\leqslant 0}_i$ are the minimal projective resolutions of the 
           composition factors of $X$. Then
           $[P^{\leqslant 0}] = \sum_{i=1}^n [Q^{\leqslant 0}_i]$ in $G_0(\mathcal{\Hp A})$.
\end{itemize}

\end{Lemma}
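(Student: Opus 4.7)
The plan is to argue by induction on the composition length $n = l(X)$. The base case $n = 1$ is immediate: then $X$ is simple, so $X$ is its own only composition factor, and thus $P^{\leqslant 0} \in \TT$ by hypothesis, while $[P^{\leqslant 0}] = [Q_1^{\leqslant 0}]$ is a trivial equality.

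For the inductive step, I would select a simple submodule $S \hookrightarrow X$, producing a short exact sequence $0 \to S \to X \to X/S \to 0$ with $l(X/S) = n-1$. Denote by $Q^{\leqslant 0}$ the minimal projective resolution of $S$ and by $R^{\leqslant 0}$ the minimal projective resolution of $X/S$. The horseshoe lemma produces a projective resolution $P'^\bt$ of $X$ fitting into a componentwise split short exact sequence of complexes $0 \to Q^{\leqslant 0} \to P'^\bt \to R^{\leqslant 0} \to 0$ with $P'^k \simeq Q^k \oplus R^k$. Being componentwise split, it yields a distinguished triangle $Q^{\leqslant 0} \to P'^\bt \to R^{\leqslant 0} \to$ in $\KKp A$. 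Moreover, since both $P^{\leqslant 0}$ and $P'^\bt$ are projective resolutions of $X$, they are chain homotopy equivalent, so $P^{\leqslant 0} \simeq P'^\bt$ in $\KKp A$.

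Since $S$ is a composition factor of $X$, the hypothesis of (1) gives $Q^{\leqslant 0} \in \TT$, and applying the inductive hypothesis to $X/S$ (whose composition factors are the remaining $S_2,\dots,S_n$, still composition factors of $X$) gives $R^{\leqslant 0} \in \TT$. The distinguished triangle then places $P'^\bt$ in $\TT$, and closure under isomorphisms yields $P^{\leqslant 0} \in \TT$. For part (2), minimal projective resolutions lie in $\Hp A$ (cohomology vanishes below degree zero, and the dual complex is concentrated in non-positive chain degrees), so the same distinguished triangle gives $[P^{\leqslant 0}] = [Q^{\leqslant 0}] + [R^{\leqslant 0}]$ in $G_0(\Hp A)$; combined with the inductive identity $[R^{\leqslant 0}] = \sum_{i=2}^n [Q_i^{\leqslant 0}]$, this produces the desired formula. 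The only mildly subtle point is the identification of the horseshoe resolution with the minimal one inside $\KKp A$, which is just the standard uniqueness of projective resolutions up to homotopy; all other steps are formal consequences of the triangulated structure.
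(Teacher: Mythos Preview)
Your proof is correct and follows essentially the same approach as the paper: induction on the composition length, choice of a simple submodule $S \hookrightarrow X$, the horseshoe lemma to obtain a distinguished triangle of projective resolutions, and closure of $\TT$ under isomorphisms to pass from the horseshoe resolution to the minimal one. The paper is slightly terser about the homotopy equivalence between the horseshoe resolution and the minimal one, but otherwise the arguments coincide.
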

\begin{proof}
We show the assertions by induction on the length of $X$.
Let $l(X) = 1$. Then $X$ is simple and $P^{\leqslant 0} \in \TT$ by assumption.

Let $l(X) > 1$. Then there exist $A$-modules $S$ and $Y$ with $l(S) = 1$ and $l(Y) < l(X)$ 
such that there is a short exact sequence 
\[
0 \rightarrow S \rightarrow X \rightarrow Y \rightarrow 0.
\]
By assumption and induction respectively, the minimal projective resolutions $Q^{\leqslant 0}$ of $S$ and $R^{\leqslant 0}$ of $Y$ 
are contained in $\TT$. Furthermore, assertion (2) holds for $R^{\leqslant 0}$.
Using the horseshoe lemma, the short exact sequence of modules induces a short exact sequence of projective resolutions.
This, in turn, induces a distinguished triangle in $\KKp A$.
\[
Q^{\leqslant 0} \rightarrow P^{\leqslant 0} \rightarrow R^{\leqslant 0} \rightarrow
\]
Thus, we have $P^{\leqslant 0} \in \TT$ since $\TT$ is closed under isomorphisms.
Moreover, the distinguished triangle implies $[P^{\leqslant 0}] = [Q^{\leqslant 0}] + [R^{\leqslant 0}]$.
\end{proof}

\begin{Lemma} \label{Lem:ProjRes-in-T_Simple}
Suppose $S$ is a simple $A$-module with minimal projective resolution $P^{\leqslant 0}$. 
Let $I$ be the injective hull of $S$ together with a short exact sequence $0 \to S \to I \to C \to 0$ in $\rmod A$.

If $S \in \Pperp_A$, then there exists a distinguished triangle $P^{\leq 0} \to F_I^\bt \to F_C^\bt \to$ in $\KKp A$.
In particular, $P^{\leqslant 0} \in \TT$ for any
triangulated subcategory $\TT$ of $\KKp{A}$ that contains $\LL_A$ and is closed under isomorphisms.
\end{Lemma}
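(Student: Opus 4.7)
The plan is to construct the morphism $f^\bt : F_I^\bt \to F_C^\bt$ in $\KKp A$ coming under the functor $\FF$ from the surjection $I \twoheadrightarrow C$, and then show that its mapping cone is isomorphic to $P^{\leqslant 0}[1]$ in $\KKp A$. Rotating the resulting distinguished triangle then yields the sought triangle $P^{\leqslant 0} \to F_I^\bt \to F_C^\bt \to$, and the ``in particular'' assertion follows at once: any triangulated subcategory $\TT$ of $\KKp A$ containing $\LL_A$ and closed under isomorphisms contains $F_I^\bt$ and $F_C^\bt$, hence also $P^{\leqslant 0}$.

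The hypothesis $S \in \Pperp_A$ enters through the following observation: the restriction map $I^\ast \to S^\ast$ is zero. Indeed, suppose $\phi : I \to A$ satisfies $\phi|_S \neq 0$. Since $S$ is simple, $\phi|_S$ is injective and $\ker \phi \cap S = 0$; since the inclusion $S \hookrightarrow I$ is essential (as $I$ is the injective envelope of $S$), this forces $\ker \phi = 0$. Then $\phi$ embeds the injective module $I$ as a direct summand of $A$, making $I$ projective; combined with $I$ being injective, this yields $I \in \PP_A$, and the inclusion $S \hookrightarrow I$ contradicts $S \in \Pperp_A$. Applying $\Hom_A(-, A)$ to $0 \to S \to I \to C \to 0$ and using that $I^\ast \to S^\ast$ is zero now produces an isomorphism $C^\ast \xrightarrow{\sim} I^\ast$ of left $A$-modules.

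To identify the cone of $f^\bt$, consider for each $X \in \{I, C\}$ the brute truncation at degree zero, providing a componentwise split short exact sequence $0 \to F_X^{\geqslant 1} \to F_X^\bt \to F_X^{\leqslant 0} \to 0$ and hence a distinguished triangle in $\KKp A$. The morphism $f^\bt$ restricts to a morphism between these triangles, and a standard consequence of the octahedral axiom then produces a further distinguished triangle $\mathrm{cone}(f^{\geqslant 1}) \to \mathrm{cone}(f^\bt) \to \mathrm{cone}(f^{\leqslant 0}) \to$. By Kato's construction, $f^{\geqslant 1}$ is obtained by dualizing a lift of $C^\ast \to I^\ast$ to a chain map of the minimal projective resolutions $\tau_{\geqslant 1} F_C^{\bt, \ast} \to \tau_{\geqslant 1} F_I^{\bt, \ast}$; since $C^\ast \to I^\ast$ is an isomorphism, this lift is a homotopy equivalence, hence so is $f^{\geqslant 1}$ and $\mathrm{cone}(f^{\geqslant 1}) \simeq 0$. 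Meanwhile, $f^{\leqslant 0}$ is a lift of $I \twoheadrightarrow C$ to the minimal projective resolutions $\tau_{\leqslant 0} F_I^\bt$ and $\tau_{\leqslant 0} F_C^\bt$, and applying the horseshoe lemma to $0 \to S \to I \to C \to 0$ identifies $\mathrm{cone}(f^{\leqslant 0})$ with $P^{\leqslant 0}[1]$ in $\KKp A$. Combining these yields $\mathrm{cone}(f^\bt) \simeq P^{\leqslant 0}[1]$, as desired. I expect the main technical point to be verifying cleanly that $f^{\geqslant 1}$ really is a homotopy equivalence---carefully tracking Kato's construction through the duality $(-)^\ast$---after which the octahedral and horseshoe ingredients are routine.
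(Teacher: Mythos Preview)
Your argument is correct, but it is organized differently from the paper's. Both routes rest on the same underlying fact---that $I$ cannot be projective, so every map from $I$ to a projective module kills $S$---but they deploy it in opposite directions. The paper builds the triangle from the left: it shows directly that the composite $S \hookrightarrow I \to F_I^1$ vanishes, hence $S$ sits inside $\HH^0(F_I^\bt)$, and then invokes \cref{Lem:RemovingCohomologyOfF} with $H = S$ to produce the triangle $P^{\leqslant 0} \to F_I^\bt \to C(f)^\bt \to$ and to identify $C(f)^\bt$ with $F_C^\bt$. You instead build the triangle from the right: you start with $\FF(I \twoheadrightarrow C)$, split the cone computation along the truncation $\tau_{\leqslant 0} / \tau_{\geqslant 1}$ via the $3\times 3$ lemma, and use your observation $C^\ast \xrightarrow{\sim} I^\ast$ to kill the $\tau_{\geqslant 1}$ contribution. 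Your approach is self-contained and bypasses \cref{Lem:RemovingCohomologyOfF}; the paper's is shorter because that lemma has already packaged the cone identification. The technical point you flag---that $(f^{\geqslant 1})^\ast$ really does lift $f^\ast : C^\ast \to I^\ast$---follows from the exactness $\HH_{\geqslant 0}(F_\bt^\ast) = 0$, which identifies $X^\ast$ with $\ker(F_{X,0}^\ast \to F_{X,-1}^\ast)$ and hence with $\HH_1(\tau_{\geqslant 1} F_X^{\bt,\ast})$; the induced map on these kernels is $f^\ast$ because $f^0$ lifts $f$.
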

\begin{proof}
Let $I$ be the injective hull of $S$ with embedding $f : S \hookrightarrow I$.
Consider $F_I^\bt \in \LL_A$. By assumption, $I$ is not projective so that $F_I^\bt$ is non-zero.
Our aim is to construct a morphism of complexes $f^\bt : P^{\leqslant 0} \to F_I^\bt$ with $C(f)^\bt \in \LL_A$.
\[
\begin{tikzcd}[row sep=1cm, column sep=1cm]
\cdots \ar[r] & P^{-2} \ar[r, "d_P^{-2}"] \ar[d, "f^{-2}"] & P^{-1} \ar[r, "d_P^{-1}"] \ar[d, "f^{-1}"] &
		P^{0} \ar[r] \ar[d, "f^{0}"] & 0 \ar[r] \ar[d] & \cdots & P^{\leqslant 0} \ar[d,"f^\bt"] \\
\cdots \ar[r] & F_I^{-2} \ar[r, "d_F^{-2}"] & F_I^{-1} \ar[r, "d_F^{-1}"] & 
		F_I^0 \ar[r, "d_{F}^0"] & F_I^1 \ar[r] & \cdots & F_I^\bt 
\end{tikzcd}
\]
The morphism $d_F^0$ factors through $I$ via a morphism $i : I \to F_I^1$.
Since $S$ is simple, the composite map $S \xrightarrow{f} I \xrightarrow{i} F_I^1$ is either injective or zero.
If $fi$ is injective, there exists a morphism $p : F_I^1 \to I$ with $f = (fi)p$ since $I$ is an injective module.
As a consequence, we have $f = f(i\,p) = f(i\,p)^n$ for all $n>0$. Since $S$ is simple,
$I$ is indecomposable and thus the composite $i\,p$ is either an automorphism or $(i\,p)^n$ is zero for some $n>0$.
If $i\,p$ is an automorphism, $i$ is split so that $I$ is projective-injective as a direct
summand of $F_I^1$. However, $I$ is not projective-injective by assumption.
If $(i\,p)^n = 0$, we also have $f = f(i\,p)^n = 0$. A contradiction in both cases.
Thus, the composite $fi$ cannot be injective and must be zero.

This yields 
$S \hookrightarrow \Ker(i) \simeq \Ker\!\big(I \to \im d_F^0\big) \simeq \Ker\!\big(\HH^0(\tleq F_I^\bt) \to F_I^0/\Ker(d_F^0)\big) \simeq \HH^0(F_I^\bt)$.
Since $f$ is non-zero, $S$ is isomorphic to a submodule of $\HH^0(F_I^\bt)$.
Using \cref{Lem:RemovingCohomologyOfF}, we obtain a distinguished triangle 
$P^{\leq 0} \xrightarrow{f^\bt} F_I^\bt \to C(f)^\bt \to$ with $C(f)^\bt \simeq F_C^\bt \in \LL_A$.
\end{proof}

We are now ready to show the main result of this section.
\begin{Theorem} \label{Thm:Triangulated-Hull-of-L}
Let $A$ be a finite dimensional $k$-algebra.

The category $\HP A$ is the smallest triangulated subcategory of $\KKp{A}$ that contains the category $\LL_A$ 
and is closed under isomorphisms.
\end{Theorem}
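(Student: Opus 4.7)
The plan is to verify the two containments implicit in the theorem. First, $\LL_A \subseteq \HP A$ and the fact that $\HP A$ is a triangulated subcategory closed under isomorphisms are already in hand: the former is \cref{Lem:L-is-in-K_perp}.(3), and the latter is \cref{Rem:Properties_H-HP-Hstp}.(1). So the real content is the minimality statement: if $\TT$ is any triangulated subcategory of $\KKp A$ containing $\LL_A$ and closed under isomorphisms, then $\HP A \subseteq \TT$.

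To prove this, fix $F^\bt \in \HP A$. By definition $F^\bt \in \Hp A$, so I may choose integers $l \leq r$ with $\HH^{<l}(F^\bt) = 0$ and $\HH_{\geqslant r}(F_\bt^\ast) = 0$. In view of \cref{Lem:Reduction-To-ProjRes}.(1) it suffices to show that for each $l \leq k < r$ the minimal projective resolution of the module $X_k := \HH^k(F^\bt)$ lies in $\TT$. Since $F^\bt \in \KPperp A$, \cref{Lem:H-iff-Complex_in-perp}.(1) tells me $X_k \in \Pperp_A$ for every $k$. Then \cref{Lem:X_iff_CompFactor-in-P-perp} (applied with $\mathcal{I} = \PP_A$) gives that every composition factor of $X_k$ is also in $\Pperp_A$. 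Finally \cref{Lem:ProjRes-in-T_Simple} places the minimal projective resolution of each such simple composition factor in $\TT$ (using that $\TT$ contains $\LL_A$ and the distinguished triangle $P^{\leqslant 0} \to F_I^\bt \to F_C^\bt \to$ constructed there), and \cref{Lem:ProjRes-if-CompFactors-in-T}.(1) then propagates this to the minimal projective resolution of $X_k$ itself. Feeding these into \cref{Lem:Reduction-To-ProjRes}.(1) yields $F^\bt \in \TT$.

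Thus $\HP A \subseteq \TT$, while conversely $\HP A$ is itself a triangulated subcategory of $\KKp A$ containing $\LL_A$ and closed under isomorphisms, so $\HP A$ is the smallest such subcategory.

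The heavy lifting has already been done in the preceding lemmas; the proof of the theorem itself is essentially a bookkeeping step that threads the implications $F^\bt \in \HP A \Rightarrow \HH^k(F^\bt) \in \Pperp_A \Rightarrow$ (composition factors in $\Pperp_A$) $\Rightarrow$ (resolutions of simples in $\TT$) $\Rightarrow$ (resolution of $\HH^k(F^\bt)$ in $\TT$) $\Rightarrow F^\bt \in \TT$ in the correct order. The main conceptual obstacle was isolated earlier in \cref{Lem:ProjRes-in-T_Simple}, where one has to rule out that the composite $S \hookrightarrow I \xrightarrow{i} F_I^1$ is injective by an indecomposability-plus-non-projectivity argument; here that work pays off and nothing further is required.
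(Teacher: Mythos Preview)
Your proof is correct and follows essentially the same approach as the paper's own argument: both establish $\LL_A \subseteq \HP A$ and the triangulated-subcategory property via \cref{Lem:L-is-in-K_perp}.(3) and \cref{Rem:Properties_H-HP-Hstp}.(1), and then reduce $\HP A \subseteq \TT$ through the chain \cref{Lem:H-iff-Complex_in-perp} $\to$ \cref{Lem:Reduction-To-ProjRes} $\to$ \cref{Lem:ProjRes-if-CompFactors-in-T} $\to$ \cref{Lem:X_iff_CompFactor-in-P-perp} $\to$ \cref{Lem:ProjRes-in-T_Simple}. The only difference is expository: you make the intermediate modules $X_k$ explicit and spell out the logical dependencies a bit more fully.
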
 
\begin{proof}
By \cref{Lem:L-is-in-K_perp}.(3), we have that $\LL_A \subseteq \HP A$. 
By \cref{Rem:Properties_H-HP-Hstp}.(1), $\HP A$ is a triangulated subcategory of $\KKp A$ that is closed under isomorphisms.
Together, we obtain that $\HP A$ is a triangulated category containing $\LL_A$. 

Suppose that $\TT$ is another triangulated subcategory of $\KK{A}$ that contains $\LL_A$ and is closed under isomorphisms.
We show that $\HP A \subseteq \TT$.

Recall that $\HP A \subseteq \KPperp A$. 
By \cref{Lem:H-iff-Complex_in-perp} and \cref{Lem:Reduction-To-ProjRes} it suffices to show that the minimal projective 
resolution of every $A$-module $X \in \Pperp_A$ is an element of $\TT$.
Moreover, by \cref{Lem:ProjRes-if-CompFactors-in-T} it suffices to show that the minimal projective resolution of every 
composition factor of $X$ is an element of $\TT$. Let $S$ be such a composition factor.
Then $S$ is an element of $\Pperp_A$ by \cref{Lem:X_iff_CompFactor-in-P-perp}.
Using \cref{Lem:ProjRes-in-T_Simple}, we now obtain that the minimal projective resolution of $S$ is an element of $\TT$.
\end{proof}

Instead of defining $\HP A$ as a subcategory of $\KPperp A$, we also can consider right perpendicular categories. 
\begin{Remark} \label{Rem:HP-as-RightPerp}
The following are equivalent for a complex $F^\bt \in \KKp A$.
\begin{itemize}
\item[(1)] $F^\bt \in \KPperp A$.
\item[(2)] $H^k(F^\bt) \in \Pperp_A$ for all $k \in \ZZ$.
\item[(3)] $H^k(F^\bt) \in (\nu^{-1}\PP_A)^\perp$ for all $k \in \ZZ$.
\item[(4)] $F^\bt \in \mathcal{K}^b(\nu^{-1}\PP_A)^\perp$.
\end{itemize}
In particular, we have $F^\bt \in \HP A$ if and only if $F^\bt \in \mathcal{K}^b(\nu^{-1}\PP_A)^\perp$ and $F^\bt \in \Hp A$.

In fact, the equivalence of (1) and (2), as well as the equivalence of (3) and (4) 
were shown in \cref{Lem:H-iff-Complex_in-perp}.
The equivalence of (2) and (3) follows from the natural isomorphism
\[
\Hom_A(X,\nu P) \simeq \Du\Hom_A(P,X)
\]
for all $P \in \proj A$ and $X \in \rmod A$.
\end{Remark}

We close this section with a characterization of $\HP A$ inside $\Hp A$ in case that $A$ has dominant dimension at least one.
\begin{Remark} \label{Rem:HomologyInHP_ZeroUnderNu}
Let $\ddim A \geq 1$ and $F^\bt \in \Hp A$.
Then $F^\bt \in \HP A$ if and only if $\nu(\HH^k(F^\bt)) = 0$ for all $k \in \ZZ$.

In fact, we have $F^\bt \in \HP A$ if and only if 
$\HH^k(F^\bt) \in \Pperp_A$ for all $k \in \ZZ$ by \cref{Lem:H-iff-Complex_in-perp}.(1).
However, under the assumption $\ddim A \geq 1$, we have $\Pperp_A = \!{}^\perp\proj A$.
Thus, $\HH^k(F^\bt) \in \Pperp_A$ if and only if $(\HH^k(F^\bt))^\ast = 0$.
\end{Remark}

\section{Grothendieck group}
\label{Sec:Grothendieck}

We recall the definition of the stable Grothendieck group as stated in \cite{MartinezVilla_Properties}.
\begin{Definition} \label{Def:Grothendieck_Stable}
Let $L$ be the free abelian group generated by the isomorphism classes
of objects in $\rmod A$ without projective direct summands.
Let $R$ be the subgroup of $L$ generated by the classes
\[
[X]_{\rm st} - [Y]_{\rm st} + [Z]_{\rm st}
\]
where $0 \to X \oplus P \to Y \oplus Q \to Z \to 0$ is a short exact sequence with 
$P,Q \in \proj A$ and where $X$ and $Y$ may be zero. 

The \textit{stable Grothendieck group $G_0^\mathrm{st}(A)$ of $A$} is defined as the quotient $L/R$.
\end{Definition}
Mart\'inez-Villa has shown in \cite[Theorem 2.1]{MartinezVilla_Properties} that
stably equivalent algebras without nodes and without semisimple summands have isomorphic stable Grothendieck groups.

We consider the Grothendieck group $G_0(\HP A)$ of the triangulated category $\HP A$.
Using the equivalence $\mathcal{F} : \stmod A \to \LL_A$, we obtain a 
Grothendieck group $G_0^\PP(A)$ for the stable module category which is defined via perfect exact sequences.
We show that $G_0^\PP(A)$ is invariant under stable equivalences
which preserve perfect exact sequences.
\begin{Definition} \label{Def:Grothendieck_PerfSeq}
Let $L$ be the free abelian group generated by the isomorphism classes
of objects in $\rmod A$ without projective direct summands.
Let $R'$ be the subgroup of $L$ generated by the classes
\[
[X] - [Y] + [Z]
\]
where $0 \to X \to Y \oplus P \to Z \to 0$ is a perfect exact sequence with $P \in \proj A$. 

The group $G_0^\PP(A)$ is defined as the quotient $L/R'$.
\end{Definition}

\begin{Theorem} \label{Thm:GrothendieckGroup_stmod-HP}
The equivalence $\mathcal{F} : \stmod A \to \LL_A$ induces an isomorphism 
\[
G_0^\PP(A) \simeq G_0(\HP A).
\]
\end{Theorem}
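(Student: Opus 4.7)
The plan is to define $\phi : G_0^\PP(A) \to G_0(\HP A)$ by $[X] \mapsto [F_X^\bt]$, verify it is a well-defined group homomorphism, and exhibit a two-sided inverse. The argument draws on the structural results Lemmas~2.4-2.8 used to prove Theorem~\ref{Thm:Triangulated-Hull-of-L}.

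To show $\phi$ is well-defined, I apply \cref{Prop:PerfSeq-DistTriang}: a perfect exact sequence $0 \to X \to Y \oplus P \to Z \to 0$ (with $X, Y, Z$ without projective direct summands and $P \in \proj A$) produces a distinguished triangle $F_X^\bt \to F_Y^\bt \to F_Z^\bt \to$ in $\HP A$, using that $F_P^\bt \simeq 0$. Hence each generator $[X]-[Y]+[Z]$ of $R'$ is sent to $0$ in $G_0(\HP A)$. Surjectivity follows by unravelling \cref{Lem:Reduction-To-ProjRes}: for $F^\bt \in \HP A$ with $\HH^{<l}(F^\bt)=0$ and $\HH_{\geqslant r}(F_\bt^\ast)=0$, one has
\[
[F^\bt] \;=\; \sum_{k=l}^{r-1}(-1)^k [P_k^\bt] \,+\, (-1)^r[F_Y^\bt] \quad \text{in } G_0(\HP A),
\]
where $Y := \HH^r(\tleq F^\bt)$ and $P_k^\bt$ is the minimal projective resolution of $\HH^k(F^\bt) \in \Pperp_A$ (\cref{Lem:H-iff-Complex_in-perp}). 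Each composition factor $S_{k,j}$ of $\HH^k(F^\bt)$ also lies in $\Pperp_A$ by \cref{Lem:X_iff_CompFactor-in-P-perp}, so by \cref{Lem:ProjRes-if-CompFactors-in-T} combined with \cref{Lem:ProjRes-in-T_Simple} we obtain $[P_k^\bt] = \sum_j \bigl([F_{I_{k,j}}^\bt] - [F_{C_{k,j}}^\bt]\bigr)$, where $I_{k,j}$ is the injective hull of $S_{k,j}$ and $C_{k,j} = I_{k,j}/S_{k,j}$. Therefore $[F^\bt]$ lies in the image of $\phi$.

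For injectivity, I construct $\psi : G_0(\HP A) \to G_0^\PP(A)$ on representatives by the matching formula
\[
\psi([F^\bt]) \;:=\; \sum_{k=l}^{r-1}(-1)^k \sum_j \bigl([I_{k,j}]-[C_{k,j}]\bigr) \,+\, (-1)^r [Y],
\]
so that $\phi \circ \psi = \id$ is built in by the surjectivity computation. Specialising to $F^\bt = F_X^\bt \in \LL_A$ with the minimal choice $l=r=0$ reduces the formula to $\psi([F_X^\bt]) = [\HH^0(\tleq F_X^\bt)] = [X]$ via the quasi-inverse of $\FF$ from \cref{Thm:Equivalence_F}, establishing $\psi \circ \phi = \id$.

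The main obstacle is verifying that $\psi$ is well-defined on $G_0(\HP A)$: independence of the choices of $l$ and $r$, independence of the composition series, and invariance under distinguished triangles. Independence of $l$ is immediate because $\HH^k(F^\bt)=0$ for $k<l$. For $r$, incrementing $r \to r+1$ introduces the two short exact sequences $0 \to \HH^r(F^\bt) \to \Cok(d_F^{r-1}) \to \im(d_F^r) \to 0$ and $0 \to \im(d_F^r) \to F^{r+1} \to \Cok(d_F^r) \to 0$; one shows the combined change in the formula telescopes to zero in $G_0^\PP(A)$ using that $F^{r+1}$ is projective and exploiting the hypothesis $\HH_{\geqslant r}(F_\bt^\ast)=0$ to obtain perfect-exactness of the second sequence. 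Invariance under a distinguished triangle $A^\bt \to B^\bt \to C^\bt \to$ in $\HP A$ is obtained by choosing a common pair $(l,r)$ for all three complexes, combining the long exact sequence of cohomology with the Jordan--Hölder decomposition, and applying \cref{Prop:PerfSeq-DistTriang} to identify each surviving triangle whose three vertices lie in $\LL_A$ as coming from a perfect exact sequence, so that the contributions match degree by degree.
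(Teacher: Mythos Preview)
Your overall architecture is exactly the paper's: define $\phi=\sigma$ by $[X]\mapsto[F_X^\bt]$, define the candidate inverse $\psi=\tilde\sigma$ by the same cohomology-based formula, and check the two composites. The surjectivity computation and the verification of $\psi\circ\phi=\id$ via $l=r=0$ are fine and match the paper.

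The genuine gap is in your independence-of-$r$ argument. Incrementing $r\to r+1$ requires the identity
\[
[Y_r]\;=\;[I_H]-[C_H]-[Y_{r+1}]\quad\text{in }G_0^\PP(A),\qquad H:=\HH^r(F^\bt),
\]
but your two short exact sequences do not produce it. The second sequence $0\to\im(d_F^r)\to F^{r+1}\to Y_{r+1}\to 0$ is indeed perfect exact (so $[\im d_F^r]=-[Y_{r+1}]$), but the first sequence $0\to H\to Y_r\to\im(d_F^r)\to 0$ is perfect exact only when $H^\ast=0$: dualising, one finds $(\im d_F^r)^\ast\to Y_r^\ast$ is already an isomorphism (both equal $\im d_r^\ast=\ker d_{r-1}^\ast$), so the dual sequence is exact iff $H^\ast=0$, which fails in general for $H\in\Pperp_A$. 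Even granting both sequences, they would give $[Y_r]+[Y_{r+1}]=[H]$, whereas what you need is $[Y_r]+[Y_{r+1}]=[I_H]-[C_H]$; and $[H]\ne[I_H]-[C_H]$ in $G_0^\PP(A)$ in general, because the injective-hull sequence $0\to S\to I_S\to C_S\to 0$ for a simple $S\in\Pperp_A$ is typically \emph{not} perfect exact.

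The paper closes this gap (its Claim~2) via the octahedral axiom: for $X=Y_r$ with simple $S\subseteq\HH^0(F_X^\bt)$ and $N=X/S$, embed $X$ into its injective hull $I\oplus J$ to obtain a triangle $F_X^\bt\to F_{I\oplus J}^\bt\to K^\bt\to$ with $K^\bt\in\LL_A$; combining with $P^\bt\to F_X^\bt\to F_N^\bt\to$ and $P^\bt\to F_{I\oplus J}^\bt\to F_{C\oplus J}^\bt\to$ in an octahedron produces $F_N^\bt\to F_{C\oplus J}^\bt\to K^\bt\to$. Now all four vertices of the last two triangles lie in $\LL_A$, so \cref{Prop:PerfSeq-DistTriang} yields two perfect exact sequences whose subtraction gives $[X]-[N]=[I]-[C]$. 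Iterating over a composition series of $H$ gives the required identity. Your sketch of triangle-invariance has the same deficiency: the long exact cohomology sequence is not a sequence of perfect exact sequences, and the degree-$r$ boundary term must again be handled via this octahedral step together with a separate check (as in the paper's Claim~4) that the residual sequence $0\to\tilde X\to\HH^r(\tau_{\le r}\tilde K^\bt)\to\HH^r(\tau_{\le r}L^\bt)\to 0$ is perfect exact.
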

\begin{proof}
By \cref{Prop:PerfSeq-DistTriang}, every perfect exact sequence $0 \to X \to Y \oplus P \to Z \to 0$ in $\rmod A$ with $P$ projective
induces a distinguished triangle 
$F_X^\bt \to F_Y^\bt \to F_Z^\bt \rightarrow $ 
in $\LL_A \subseteq \HP A$ and vice versa.
Hence, the natural map $\sigma : G_0^\PP(A) \to G_0(\HP A)$ given by $[X] \mapsto [F_X^\bt]$ is well-defined.
Since split exact sequences are perfect exact, $\sigma$ is a group homomorphism.

Suppose given $X \in \rmod A$.
Let $S_1,\dots,S_n$ be the composition factors of $X$.
For $1 \leq k \leq n$, let $S_k \hookrightarrow I_k$ be the injective hull of $S_k$ together with a short exact sequence 
$0 \to S_k \to I_k \to C_k \to 0$ in $\rmod A$.
Note that the modules $I_k$ and $C_k$ are uniquely determined by $X$ up to isomorphism.
Throughout the proof, we write $I_X := \bigoplus_{k=1}^n I_k$ and $C_X := \bigoplus_{k=1}^n C_k$ for a given $A$-module $X$.
If $X$ is the zero module, we set $I_X = 0$ and $C_X = 0$.

\textit{Claim 1.} Let $X \in \Pperp_A$ with minimal projective resolution $P^\bt$. 
We have $([I_X] - [C_X])\sigma = [P^\bt]$.

\textit{Proof of claim 1.}
Let $Q_k^\bt$ be the minimal projective resolution of $S_k$ for $1 \leq k \leq n$. 
By \cref{Lem:ProjRes-if-CompFactors-in-T}.(2), we have $[P^\bt] = \sum_{k=1}^n [Q_k]$. 
For all $1 \leq k \leq n$ we additionally have that $[Q_k^\bt] = [F_{I_k}^\bt] - [F_{C_k}^\bt]$ by \cref{Lem:ProjRes-in-T_Simple}.
Together, we obtain 
\[
([I_X] - [C_X])\sigma = \sum_{k=1}^n [I_k]\sigma - [C_k]\sigma 
                      = \sum_{k=1}^n [F_{I_k}^\bt] - [F_{C_k}^\bt] 
                      = \sum_{k=1}^n [Q_k] 
                      = [P^\bt].
\]
This proves the claim.

Suppose given a complex $G^\bt \in \HP A$. 
Let $l \in \ZZ$ such that $\HH^{< l}(G^\bt) = 0$.
Let $r \in \ZZ_{\geq l}$ such that $\HH_{\geq r}(G^\ast_\bt) = 0$.
We aim to define a map 
\[
\tilde{\sigma}' : \HP A \to G_0^\PP(A) : G^\bt \mapsto \sum_{k=l}^{r-1} (-1)^k ([I_{\HH^k(G^\bt)}] - [C_{\HH^k(G^\bt)}]) + (-1)^{r}[\HH^{r}(\tau_{\leq r} G^\bt)].
\]
\textit{Claim 2.}
Suppose given $F_X^\bt \in \LL_A$ for $X\in \rmod A$ with $\HH^0(F_X^\bt)$ non-zero. 
Suppose given a submodule $H$ of $\HH^0(F_X^\bt)$ together with a short exact sequence $0 \to H \to X \to N \to 0$.
We have $[X] = [I_H] - [C_H] + [N]$ in $G_0^\PP(A)$.

If $H = \HH^0(F_X^\bt)$, then we have 
$[X] = [\HH^0(\tleq F_X^\bt)] = [I_H] - [C_H] - [\HH^1(\tau_{\leq 1} F_X^\bt)]$ in $G_0^\PP(A)$.

\textit{Proof of claim 2.} We proceed by induction on the number of composition factors of $H$.

Suppose that $S:= H$ is simple. Let $I \in \rmod A$ be the injective hull of $S$.  
Since $S$ is a submodule of $X$, there exists an injective module $J \in \rmod A$ such that $I \oplus J$ is the injective hull of $X$.
Let $P^\bt$ be the minimal projective resolution of $S$. By \cref{Lem:RemovingCohomologyOfF}.(2,3), we have the following distinguished triangle
\[
P^\bt \xrightarrow{u^\bt} F_X^\bt \to F_N^\bt \to
\]
and a short exact sequence $0 \to S \to \HH^0(F_X^\bt) \to \HH^0(F_N^\bt) \to 0$.
Additionally, we have the following distinguished triangle by \cref{Lem:ProjRes-in-T_Simple} with $w^\bt : P^\bt \to F_I^\bt$ 
induced by the embedding $S \hookrightarrow I$. Note that $S \in \Pperp_A$ since $\HH^0(F_X^\bt) \in \Pperp_A$.
\[
P^\bt \xrightarrow{\matez{w^\bt}{0}} F_I^\bt \oplus F_J^\bt \to F_C^\bt \oplus F_J^\bt \to 
\]
The embedding $S \hookrightarrow I \oplus J$ factors through the embedding $S \hookrightarrow X$ via an injective morphism $X \to I \oplus J$.
This induces a morphism of complexes $({v_1^\bt}\;{v_2^\bt}) : F_X^\bt \to F_I^\bt \oplus F_J^\bt$ such that 
$u^\bt ({v_1^\bt}\;{v_2^\bt}) = ({w^\bt}\;{0})$ in $\KKp A$.
Let $K^\bt := C\!\big(({v_1}\;{v_2})\big)^\bt$ be its mapping cone. 
By \cite[Theorem 3.9]{Kato_Mono}, we have that $K^\bt \in \LL_A$. 
We obtain the following distinguished triangle.
\[
F_X^\bt \xrightarrow{\matez{v_1^\bt}{v_2^\bt}} F_I^\bt \oplus F_J^\bt \to K^\bt \to
\]
Now, the octahedral axiom gives another distinguished triangle.
\[
F_N^\bt \to F_C^\bt \oplus F_J^\bt \to K^\bt \to
\]
By \cref{Prop:PerfSeq-DistTriang} the two triangles above induce perfect exact sequences such that
\[
[X] - [N] = [I] + [J] - [\HH^0(\tleq K^\bt)] - ([C] + [J] - [\HH^0(\tleq K^\bt)]) = [I] - [C].
\]
This verifies the claim in the case that $S = H$ is simple.
Now, suppose that $H$ has $n>1$ many composition factors.
Let $0 \to U \to H \to T \to 0$ be a short exact sequence in $\rmod A$ with $T$ a simple module.
Let $\tilde{X} := \Cok(U \to X)$.
By induction, we may assume that we have $[X] = [I_U] - [C_U] + [\tilde{X}]$ in $G_0^\PP(A)$.
Consider the following commutative diagram with exact rows.
\[
\begin{tikzcd}
0 \ar[r] & U \ar[d, equal] \ar[r] & H \ar[d] \ar[r] & T \ar[d, dashed] \ar[r]         & 0 \\
0 \ar[r] & U \ar[d] \ar[r]        & X \ar[d] \ar[r] & \tilde{X} \ar[d, dashed] \ar[r] & 0 \\
0 \ar[r] & 0 \ar[r]               & N \ar[r, equal] & N \ar[r]                        & 0
\end{tikzcd}
\]
Since $0 \to H \to X \to N \to 0$ is a short exact sequence, 
we obtain another short exact sequence $0 \to T \to \tilde{X} \to N \to 0$.
By \cref{Lem:RemovingCohomologyOfF}.(2,3), we also have a short exact sequence of the form 
$0 \to U \to \HH^0(F_X^\bt) \to \HH^0(F_{\tilde{X}}^\bt) \to 0$. 
Thus, $T$ is a submodule of $\HH^0(F_{\tilde{X}}^\bt)$.
Using that $T$ is simple, we can show as above that $[\tilde{X}] = [I_T] - [C_T] + [N]$.
Since $[I_H] = [I_U] + [I_T]$ and $[C_H] = [C_U] + [C_T]$, combining all equations yields
$[X] = [I_H] - [C_H] + [N]$ in $G_0^\PP(A)$.

If $H = \HH^0(F_X^\bt)$, we have that $\HH^0(F_N^\bt) = 0$ and $\tau_{\geq 0} F_X^\bt = \tau_{\geq 0} F_N^\bt$ 
by \cref{Lem:RemovingCohomologyOfF}.(1,2).
Since $\HH^0(F_N^\bt) = 0$, we know that $F_N^\bt[1] \in \LL_A$.
By \cref{Prop:PerfSeq-DistTriang}, the distinguished triangle $F_N^\bt \to 0 \to F_N^\bt[1] \to$ now results in a
perfect exact sequence $0 \to N \to P \to \HH^0\!\big(\tleq (F_N^\bt[1])\big) \to 0$ for some $P \in \proj A$. 
Thus, we obtain
$[N] = -[\HH^1(\tau_{\leq 1} F_N^\bt)] = -[\HH^1(\tau_{\leq 1} F_X^\bt)]$ in $G_0^\PP(A)$.
In conclusion, $[X] = [I_H] - [C_H] + [N] = [I_H] - [C_H] - [\HH^1(\tau_{\leq 1} F_X^\bt)]$.
This proves the claim.

\textit{Claim 3.} Suppose given a complex $G^\bt \in \HP A$. 
Let $l \in \ZZ$ such that $\HH^{< l}(G^\bt) = 0$.
Let $r_1 \in \ZZ_{\geq l}$ such that $\HH_{\geq r_1}(G^\ast_\bt) = 0$. We write $X_k := \HH^{k}(\tau_{\leq k} G^\bt)$. 
Then the element
\[
  \sum_{k=l}^{r_1-1} (-1)^k ([I_{\HH^k(G^\bt)}] - [C_{\HH^k(G^\bt)}]) + (-1)^{r_1}[X_{r_1}]
\]
in $G_0^\PP(A)$ is independent of the choice of $l$ and $r_1$ provided $\HH^{< l}(G^\bt) = 0$ and $\HH_{\geq r_1}(G^\ast_\bt) = 0$.
That is, for every $r_2 \geq r_1$, we have 
\[
(-1)^{r_1}[X_{r_1}] = \sum_{k=r_1}^{r_2-1} (-1)^k ([I_{\HH^k(G^\bt)}] - [C_{\HH^k(G^\bt)}]) + (-1)^{r_2}[X_{r_2}].
\]
\textit{Proof of claim 3.}
The independence of $l$ of the sum above follows from $I_{\HH^k(G^\bt)} = 0$ and $C_{\HH^k(G^\bt)} = 0$ for $k < l$.
We show that 
\[
0 = \sum_{k=r_1}^{r_2-1} (-1)^k ([I_{\HH^k(G^\bt)}] - [C_{\HH^k(G^\bt)}]) + (-1)^{r_2}[X_{r_2}] - (-1)^{r_1}[X_{r_1}].
\]
by induction on $r_2 \in \ZZ_{\geq r_1}$. 
For $r_2 = r_1$ there is nothing to show.
For the induction step, we may assume that the equation holds for some $r_2 \geq r_1$. We obtain
\begin{align*}
 & \sum_{k=r_1}^{r_2} (-1)^k ([I_{\HH^k(G^\bt)}] - [C_{\HH^k(G^\bt)}]) + (-1)^{r_2+1}[X_{r_2+1}] - (-1)^{r_1}[X_{r_1}] \\
=&\; (-1)^{r_2} ([I_{\HH^{r_2}(G^\bt)}] - [C_{\HH^{r_2}(G^\bt)}]) + (-1)^{r_2+1}[X_{r_2+1}] - (-1)^{r_2}[X_{r_2}].
\end{align*}
Note that $\HH_{\geq r_2}(G^\ast_\bt) = 0$ and $\HH_{\geq r_2}((F_{X_{r_2}}^\bt[-r_2])^\ast) = \HH_{\geq 0}((F_{X_{r_2}}^\bt)^\ast) = 0$.
Furthermore, we have $\HH^{r_2}(\tau_{\leq r_2}(F_{X_{r_2}}^\bt[-r_2])) = \HH^{0}(\tleq F_{X_{r_2}}^\bt) \simeq X_{r_2} = \HH^{r_2}(\tau_{\leq r_2} G^\bt)$. 
Since projective resolutions are unique up to isomorphism in $\KKp A$, this implies that 
$\tau_{\geq r_2}G^\bt \simeq \tau_{\geq r_2}(F_{X_{r_2}}^\bt[-r_2])$ in $\KKp A$.
In particular, we have $\HH^{r_2}(G^\bt) \simeq \HH^{r_2}(F_{X_{r_2}}^\bt[-r_2]) = \HH^{0}(F_{X_{r_2}}^\bt)$. 
Using claim 2, we obtain
\begin{align*}
[I_{\HH^{r_2}(G^\bt)}] - [C_{\HH^{r_2}(G^\bt)}] 
    &= [\HH^{0}(\tleq F_{X_{r_2}}^\bt)] + [\HH^{1}(\tau_{\leq 1} F_{X_{r_2}}^\bt)] \\
    &= [\HH^{r_2}(\tau_{\leq r_2}G^\bt)] + [\HH^{r_2+1}(\tau_{\leq r_2+1}G^\bt)] \\
    &= [X_{r_2}] + [X_{r_2+1}].
\end{align*}
Thus, we have
\begin{align*}
 \; (-1)^{r_2} ([I_{\HH^{r_2}(G^\bt)}] - [C_{\HH^{r_2}(G^\bt)}]) + (-1)^{r_2+1}[X_{r_2+1}] - (-1)^{r_2}[X_{r_2}] &
= \\ (-1)^{r_2} ([X_{r_2}] + [X_{r_2+1}]) + (-1)^{r_2+1}[X_{r_2+1}] - (-1)^{r_2}[X_{r_2}] &
=\; 0.
\end{align*}
This proves the claim.

Suppose given a complex $G^\bt \in \HP A$. 
Let $l \in \ZZ$ such that $\HH^{< l}(G^\bt) = 0$.
Let $r \in \ZZ_{\geq l}$ such that $\HH_{\geq r}(G^\ast_\bt) = 0$.
We define a map $\tilde{\sigma}'$ as follows.
\[
\tilde{\sigma}' : \HP A \to G_0^\PP(A) : G^\bt \mapsto \sum_{k=l}^{r-1} (-1)^k ([I_{\HH^k(G^\bt)}] - [C_{\HH^k(G^\bt)}]) + (-1)^{r}[\HH^{r}(\tau_{\leq r} G^\bt)].
\]
By claim 3 this definition is independent of the choice of $l$ and $r$.

\textit{Claim 4.}
Suppose given a distinguished triangle $G^\bt \to K^\bt \to L^\bt \to$ in $\HP A$. We have that
$([G^\bt]-[K^\bt]+[L^\bt])\tilde{\sigma}' = 0$ in $G_0^\PP(A)$.

\textit{Proof of claim 4.}
Let $l \in \ZZ$ such that $\HH^{< l}(G^\bt) = 0$, $\HH^{< l}(K^\bt) = 0$ and $\HH^{< l}(L^\bt) = 0$.
Let $r \in \ZZ_{\geq l}$ such that $\HH_{\geq r}(G_\bt^\ast) = 0$, $\HH_{\geq r}(K_\bt^\ast) = 0$ and $\HH_{\geq r}(L_\bt^\ast) = 0$.
The distinguished triangle induces a split short exact sequence of complexes
$0 \to \tau_{\leq r}G^\bt \to \tau_{\leq r}\tilde{K}^\bt \to \tau_{\leq r}L^\bt \to 0$
with $\tilde{K}^\bt \simeq K^\bt$ in $\KKp A$. 
Note that $[\HH^{r}(\tau_{\leq r} \tilde{K}^\bt)] = [\HH^{r}(\tau_{\leq r} K^\bt)]$.
This results in the following long exact sequence of cohomology.
\[
0 \to \HH^l(G^\bt) \to \HH^l(K^\bt) \to \cdots \to \HH^{r-1}(L^\bt) \xrightarrow{\delta} \HH^r(\tau_{\leq r}G^\bt) \to \HH^r(\tau_{\leq r}\tilde{K}^\bt) \to \HH^r(\tau_{\leq r}L^\bt) \to 0
\]
Recall that for $X \in \rmod A$ the modules $I_X$ and $C_X$ are uniquely determined up to isomorphism 
by the composition factors of $X$. The long exact sequence of cohomology now implies that 
\small
\[
\sum_{k=l}^{r-1} (-1)^k ([I_{\HH^k(G^\bt)}] - [I_{\HH^k(K^\bt)}] + [I_{\HH^k(L^\bt)}] - [C_{\HH^k(G^\bt)}] + [C_{\HH^k(K^\bt)}] - [C_{\HH^k(L^\bt)}]) + (-1)^r([I_{\im(\delta)}] - [C_{\im(\delta)}])
=0
\]\normalsize
We write $X:= \HH^r(\tau_{\leq r}G^\bt)$ and $\tilde{X} := \HH^r(\tau_{\leq r}G^\bt)/\im(\delta) \simeq \Ker\!\big(\HH^r(\tau_{\leq r}\tilde{K}^\bt) \to \HH^r(\tau_{\leq r}L^\bt)\big)$.
Note that $\im(\delta) \simeq \Cok\!\big(\HH^{r-1}(K^\bt) \to \HH^{r-1}(L^\bt)\big) \hookrightarrow \HH^r(G^\bt) \simeq \HH^0(F_X^\bt)$.
Thus, \cref{Lem:RemovingCohomologyOfF}.(1,3) implies $\tau_{\geq 0}F_X^\bt = \tau_{\geq 0}F_{\tilde X}^\bt$ so that $X^\ast \simeq \tilde{X}^\ast$.
Moreover, $[\HH^r(\tau_{\leq r}G^\bt)] = [X] = [I_{\im(\delta)}] - [C_{\im(\delta)}] + [\tilde{X}]$ by claim 2.
Together with the above, we obtain
\begin{align*}
     & \; ([G^\bt]-[K^\bt]+[L^\bt])\tilde{\sigma}'  \\
    =& \; \sum_{k=l}^{r-1} (-1)^k ([I_{\HH^k(G^\bt)}] - [I_{\HH^k(K^\bt)}] + [I_{\HH^k(L^\bt)}] - [C_{\HH^k(G^\bt)}] + [C_{\HH^k(K^\bt)}] - [C_{\HH^k(L^\bt)}]) \\
     & \; + (-1)^{r}([\HH^{r}(\tau_{\leq r} G^\bt)] - [\HH^{r}(\tau_{\leq r} K^\bt)] + [\HH^{r}(\tau_{\leq r} L^\bt)]) \\
    =& \; -(-1)^r([I_{\im(\delta)}] - [C_{\im(\delta)}])  + (-1)^{r}([\HH^{r}(\tau_{\leq r} G^\bt)] - [\HH^{r}(\tau_{\leq r} K^\bt)] + [\HH^{r}(\tau_{\leq r} L^\bt)]) \\
    =& \; (-1)^r([\tilde{X}] - [X] + [\HH^{r}(\tau_{\leq r} G^\bt)] - [\HH^{r}(\tau_{\leq r} K^\bt)] + [\HH^{r}(\tau_{\leq r} L^\bt)]) \\
    =& \; (-1)^r([\tilde{X}] - [\HH^{r}(\tau_{\leq r} \tilde{K}^\bt)] + [\HH^{r}(\tau_{\leq r} L^\bt)])
\end{align*}
It remains to show that the short exact sequence 
$0 \to \tilde{X} \to \HH^r(\tau_{\leq r}\tilde{K}^\bt) \to \HH^r(\tau_{\leq r}L^\bt) \to 0$
is perfect exact. The componentwise split sequence 
$0 \to \tau_{\geq r+1}L_\bt^\ast \to \tau_{\geq r+1}\tilde{K}_\bt^\ast \to \tau_{\geq r+1}G_\bt^\ast \to 0$
induces the following short exact sequence.
\[
0 \to \HH_{r+1}(\tau_{\geq r+1} L^\ast_\bt) \to \HH_{r+1}(\tau_{\geq r+1} \tilde{K}^\ast_\bt) \to \HH_{r+1}(\tau_{\geq r+1} G^\ast_\bt) \to 0
\]
Recall that $\HH_{\geq r}(L_\bt^\ast) = 0$. Thus, we have 
\[
\HH_{r+1}(\tau_{\geq r+1} L^\ast_\bt) \simeq \Cok(L_{r+2}^\ast \to L_{r+1}^\ast) \simeq \Ker(L_r^\ast \to L_{r-1}^\ast) 
\simeq \HH^r(\tau_{\leq r}L^\bt)^\ast.
\]
Similarly for the other two terms. This results in the following short exact sequence.
\[
0 \to \HH^r(\tau_{\leq r}L^\bt)^\ast \to \HH^r(\tau_{\leq r}\tilde{K}^\bt)^\ast \to X^\ast \to 0
\]
Since $X^\ast \simeq \tilde{X}^\ast$, we obtain that $0 \to \tilde{X} \to \HH^r(\tau_{\leq r}\tilde{K}^\bt) \to \HH^r(\tau_{\leq r}L^\bt) \to 0$
is a perfect exact sequence. This proves the claim.

Using claim 4, the map $\tilde{\sigma}'$ induces a map
\[
\tilde{\sigma} : G_0(\HP A) \to G_0^\PP(A) : [G^\bt] \mapsto \sum_{k=l}^{r-1} (-1)^k ([I_{\HH^k(G^\bt)}] - [C_{\HH^k(G^\bt)}]) + (-1)^{r}[\HH^{r}(\tau_{\leq r} G^\bt)].
\]
We show that $\sigma\, \tilde{\sigma} = \id_{G_0^\PP(A)}$ and $\tilde{\sigma}\, \sigma = \id_{G_0(\HP A)}$.
Then $\sigma$ and $\tilde{\sigma}$ are mutually inverse isomorphisms.

For $X \in \rmod A$, we have $[X] \sigma\, \tilde{\sigma} = [F_X^\bt] \tilde{\sigma} = [\HH^0(\tleq F_X^\bt)] = [X]$
since we can choose $l = r = 0$ in the definition of $\tilde{\sigma}$.
On the other hand, suppose given $G^\bt \in \HP A$. 
Let $P_k^\bt$ be the minimal projective resolution of $\HH^k(G^\bt)$ for $l \leq k \leq r-1$ where $l,r \in \ZZ$ 
with $\HH^{<l}(G^\bt)=0$ and $\HH_{\geq r}(G_\bt^\ast) = 0$. Let $X_r := \HH^r(\tau_{\leq r}G^\bt)$.
Using claim 1, we have $([I_{\HH^k(G^\bt)}] - [C_{\HH^k(G^\bt)}])\sigma = [P_k^\bt]$.
By \cref{Lem:Reduction-To-ProjRes}.(2), we have 
$[G^\bt] = \sum_{k=l}^{r-1} (-1)^k [P_k^\bt] + (-1)^{r}[F_{X_{r}}^\bt]$.
Together, we obtain
\belowdisplayskip=-13pt
\begin{align*}
[G^\bt]\tilde{\sigma}\, \sigma &= \sum_{k=l}^{r-1} (-1)^k ([I_{\HH^k(G^\bt)}] - [C_{\HH^k(G^\bt)}])\sigma + (-1)^{r}[X_r]\sigma \\
                               &= \sum_{k=l}^{r-1} (-1)^k [P_k^\bt] + (-1)^{r}[F_{X_{r}}^\bt]
                                = [G^\bt].
\end{align*}
\end{proof}

Let $\eta : 0 \rightarrow X \to Y \oplus P \to Z \rightarrow 0$ be a perfect exact sequence in $\rmod A$
without split summands such that $P$ is projective and $Y$ has no projective direct summand.
We say that a stable equivalence $\alpha : \stmod A \to \stmod B$ preserves the perfect exact sequence $\eta$
if there exists a perfect exact sequence
$0 \rightarrow \alpha(X) \to \alpha(Y) \oplus \tilde{P} \to \alpha(Z) \rightarrow 0$
in $\rmod B$ with $\tilde{P} \in \proj B$.
Similar to the stable Grothendieck group, a stable equivalence that preserves perfect exact sequences 
also induces an isomorphism on the level of $G_0^\PP(A)$.
A stable equivalence $\stmod A \to \stmod B$ preserves perfect exact sequences at least in a situation where
$A$ and $B$ are of finite representation type and have no nodes; cf.\ \cite[Corollary 3.18]{Nitsche_StableEquivalences}.
\begin{Theorem} \label{Thm:InducedIsoOnGrothendieckGroup}
Let $\alpha : \stmod A \to \stmod B$ be a stable equivalence such that $\alpha$ and its
quasi-inverse preserve perfect exact sequences.
Then $\alpha$ induces an isomorphism $G_0^\PP(A) \xrightarrow{\sim} G_0^\PP(B)$.
\end{Theorem}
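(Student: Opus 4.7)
The plan is to construct an inverse pair of maps $\alpha_\ast : G_0^\PP(A) \to G_0^\PP(B)$ and $\beta_\ast : G_0^\PP(B) \to G_0^\PP(A)$ directly on generators via $[X] \mapsto [\alpha(X)]$ and $[Y] \mapsto [\beta(Y)]$, where $\beta$ denotes a quasi-inverse of $\alpha$.

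First I would define $\alpha_\ast$ on the free abelian group $L_A$. Since $A$ and $B$ have no semisimple direct summands and $\rmod A$, $\rmod B$ satisfy the Krull--Schmidt property, every object of $\stmod A$ (resp.\ $\stmod B$) is represented by a unique module in $\rmod A$ (resp.\ $\rmod B$) without projective direct summands. When $X$ has no projective direct summand, I write $\alpha(X)$ for the unique such representative of the image of $X$ in $\stmod B$. Then $[X] \mapsto [\alpha(X)]$ extends linearly to a homomorphism $\alpha_\ast : L_A \to L_B$.

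Next I would check that $\alpha_\ast$ sends $R'_A$ into $R'_B$. Take a generator $[X]-[Y]+[Z]$ of $R'_A$ arising from a perfect exact sequence $\eta : 0 \to X \to Y \oplus P \to Z \to 0$ with $P$ projective and $X, Y, Z$ projective-summand-free. After splitting off summands of the form $0 \to U \to U \to 0 \to 0$ or $0 \to 0 \to V \to V \to 0$, which yield the zero relation in $R'_A$, I may assume $\eta$ has no split direct summands. The hypothesis that $\alpha$ preserves perfect exact sequences then produces a perfect exact sequence $0 \to \alpha(X) \to \alpha(Y) \oplus \tilde P \to \alpha(Z) \to 0$ in $\rmod B$ with $\tilde P \in \proj B$, whose associated relation in $R'_B$ is precisely $\alpha_\ast([X]-[Y]+[Z])$. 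Hence $\alpha_\ast$ descends to a homomorphism $G_0^\PP(A) \to G_0^\PP(B)$, which I again denote $\alpha_\ast$. Applying the same construction to $\beta$ yields $\beta_\ast : G_0^\PP(B) \to G_0^\PP(A)$. Finally, since $\beta \circ \alpha \simeq \id$ in $\stmod A$ and projective-summand-free representatives are unique up to isomorphism, $\beta_\ast\alpha_\ast = \id$ on every generator of $L_A$, and symmetrically on the other side; so $\alpha_\ast$ and $\beta_\ast$ are mutually inverse isomorphisms.

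The main obstacle is essentially bookkeeping between modules and their stable classes: one must verify that the perfect exact sequence supplied by the preservation hypothesis has as its outer terms the chosen projective-summand-free representatives of $\alpha(X), \alpha(Y), \alpha(Z)$ (so that it is literally a generator of $R'_B$), and that the reduction to $\eta$ without split summands does not alter the relation being pushed forward. Once these points are settled, the remaining verifications are formal.
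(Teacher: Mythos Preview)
Your proposal is correct and follows essentially the same approach as the paper: define $[X] \mapsto [\alpha(X)]$, use the preservation hypothesis to see that perfect exact relations map to perfect exact relations, and use the quasi-inverse to get the inverse map. The paper's proof is in fact much terser than yours, omitting the bookkeeping about projective-summand-free representatives and the reduction to sequences without split summands that you spell out.
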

\begin{proof}
By assumption, every perfect exact sequence $0 \to X \to Y \oplus P \to Z \to 0$ in $\rmod A$ 
with $P$ projective induces a perfect exact sequence
\[
0 \rightarrow \alpha(X) \rightarrow \alpha(Y) \oplus \tilde{P} \rightarrow \alpha(Z) \rightarrow 0
\]
with $\tilde{P} \in \proj B$.

Hence, the natural map $G_0^\PP(A) \rightarrow G_0^\PP(B)$ given by $[X] \mapsto [\alpha(X)]$ is well-defined.
Since $\alpha$ is an equivalence, this map is an isomorphism.
\end{proof}

Note that $G_0^\mathrm{st}(A) = 0$ if $\gdim A < \infty$. In fact, we have $[\Omega(X)] = -[X]$ by setting $Y = 0$
in the definition of $G_0^\mathrm{st}(A) = 0$. If $\Omega^n(X)$ is projective for some $n \geq 1$, we obtain $[X] = 0$.

In general, $G_0^\mathrm{st}(A)$ and $G_0^\PP(A)$ are not isomorphic.
In particular, $G_0^\PP(A)$ can be non-zero even in case of finite global dimension.
However, the following holds.
\begin{Remark} \label{Rem:GrothendieckGroup_st-P}
We have a surjective group homomorphism
\[
G_0^\PP(A) \to G_0^\mathrm{st}(A) : [X] \mapsto [X]_\mathrm{st}\,.
\]
If $A$ is self-injective, this is an isomorphism.

In fact, every perfect exact sequence in the definition of $G_0^\PP(A)$ is also a short exact sequence
of the form stated in the definition of $G_0^\mathrm{st}(A)$.
If $A$ is self-injective, every short exact sequence is perfect exact.
After potentially removing a split exact sequence starting in a projective module, every short exact sequence
is of the form as stated in \cref{Def:Grothendieck_PerfSeq}.
\end{Remark}

We close this section with a remark on generating systems of $G_0^\PP(A)$.
\begin{Remark} \label{Rem:GrothendieckGroup-GeneratingSystem}
By construction, $G_0^\PP(A)$ is generated by the indecomposable modules in $\stmod A$.
Thus, $G_0(\HP A)$ is generated by $[F^\bt]$ for $F^\bt \in \LL_A$ indecomposable by \cref{Thm:GrothendieckGroup_stmod-HP}.

However, in general $G_0^\PP(A)$ is not generated by the non-projective simple modules in $\stmod A$.
In comparison, every simple minded system over $A$ is a generating system of $G_0^\mathrm{st}(A)$ as was shown in 
\cite[Lemma 2.3]{KoenigLiu_sms}. In particular, this holds for the simple $A$-modules.
\end{Remark}

%
\section{A Nakayama closure}
\label{Sec:Hstp}

In general, the triangulated category $\HP A$ discussed in \cref{Sec:Category_HP(proj A)} is neither 
characteristic in $\KKp A$, nor in $\Hp A$.  In particular, for algebras of finite global dimension, the
category is not closed under the derived Nakayama functor.
However, the category $\HP A$ can be enlarged 
to the category $\Hstp A$ by replacing the projective-injective modules with strongly projective-injective modules.

In this section, we consider an equivalence $\nu_{\mathcal{K}} : \KKpbsp A \to \KKmbp A$ induced by the
Nakayama functor. 
In case that $\gdim A < \infty$, we retrieve the derived Nakayama functor $\KKbp A \to \KKbp A$. 
Our aim is to show that $\Hstp A$ is the smallest triangulated 
subcategory of $\KKp A$ that contains $\LL_A$ and is closed under $\nu_{\mathcal{K}}$ and under isomorphisms.
Assuming that $A$ can be embedded into a strongly projective-injective module, we give conditions under which
$\Hstp A$ is characteristic in $\Hp A$.

For the main proof, we will be able to reuse most of the results of \cref{Sec:Category_HP(proj A)}.
The main new technical result in \cref{Lem:ProjRes-in-T_Simple_nu} shows that a projective resolution 
of a simple module in $\stperp A$ is contained in any triangulated subcategory 
that contains $\LL_A$ and is closed under $\nu_{\mathcal{K}}$ and under isomorphisms.

\begin{Lemma} \label{Lem:nu-fully-faithful_special-case}
Let $X$ be an $A$-module and $Z \in \PP_A$. Then 
$\Hom_A(\nu^{-1} X, Z) \simeq \Hom_A(X, \nu Z)$ as $k$-vector spaces.
\end{Lemma}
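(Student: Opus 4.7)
The plan is to show that both sides of the asserted isomorphism are naturally isomorphic to $\Du\Hom_A(Z,X)$ as $k$-vector spaces. Since $Z \in \PP_A$ is projective-injective, write $Z = \nu P$ with $P = \nu^{-1} Z \in \proj A$; crucially, $\nu P = Z$ is itself projective, not merely injective. This lets me apply the classical Nakayama formula $\Hom_A(Y,\nu Q) \simeq \Du\Hom_A(Q,Y)$, valid for any $Y \in \rmod A$ and any $Q \in \proj A$, with $Q$ taken to be either $P$ or $Z$.

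For the right-hand side, take $Q = Z$ and $Y = X$ to obtain $\Hom_A(X,\nu Z) \simeq \Du\Hom_A(Z,X)$ immediately.

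For the left-hand side, apply the same formula with $Q = P$ and $Y = \nu^{-1} X$ to get
\[
\Hom_A(\nu^{-1} X, Z) = \Hom_A(\nu^{-1} X, \nu P) \simeq \Du\Hom_A(P, \nu^{-1} X),
\]
so it remains to identify $\Hom_A(P,\nu^{-1}X)$ with $\Hom_A(Z,X)$. Unfolding $\nu^{-1}X = (\Du X)^\ast = \Hom_A(\Du X, A)$ with $\Du X \in \modleft A$, I plan to apply the standard hom-hom adjunction $\Hom_A(M,\Hom_A(N,A)) \simeq \Hom_A(N, M^\ast)$, valid for a right $A$-module $M$ and a left $A$-module $N$, with $M = P$ and $N = \Du X$. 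This yields $\Hom_A(P,\nu^{-1}X) \simeq \Hom_A(\Du X, P^\ast)$. Finally, the $k$-duality $\Du$, which restricts to an equivalence $\modleft A \to \rmod A$, transforms the latter into $\Hom_A(\Du P^\ast, \Du\Du X) \simeq \Hom_A(\nu P, X) = \Hom_A(Z,X)$, using $\Du P^\ast = \nu P$ by definition of $\nu$ and $\Du\Du X \simeq X$ by finite-dimensionality.

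The main obstacle is bookkeeping of left versus right module structures: since $\nu^{-1} = (\Du(-))^\ast$ combines a $k$-duality with an $A$-duality, each of which swaps sides, the hom-hom adjunction must be applied carefully with the correct bimodule structures, and one must verify that the resulting composition of natural isomorphisms assembles into an isomorphism of $k$-vector spaces. Apart from this bookkeeping, the proof is a routine concatenation of standard adjunctions and will not require any new technical input.
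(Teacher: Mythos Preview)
Your proof is correct and takes a genuinely different route from the paper's. The paper argues via the homotopy category: it fixes an injective presentation $X \hookrightarrow I^0 \to I^1$, applies $\nu^{-1}$ componentwise to obtain $\nu^{-1}X \hookrightarrow Q^0 \to Q^1$ with $Q^i \in \proj A$, proves a small claim identifying $\Hom_A(Y,Z)$ with $\Hom$ in $\KK{A}$ from the two-term complex to $Z$ (for $Z$ injective), and then invokes the equivalence $\nu : \KKp{A} \xrightarrow{\sim} \KKi{A}$ to transport one Hom to the other. Your argument instead stays entirely at the module level: two applications of the Nakayama formula $\Hom_A(Y,\nu Q) \simeq \Du\Hom_A(Q,Y)$ (with $Q=Z$ and $Q=\nu^{-1}Z$, both projective because $Z \in \PP_A$) reduce the question to $\Hom_A(P,\nu^{-1}X) \simeq \Hom_A(\nu P, X)$, which you obtain from the swap $\Hom_A(M,N^\ast) \simeq \Hom_A(N,M^\ast)$ together with $\Du$-duality. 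Your approach is shorter and more elementary, avoiding any passage through complexes; the paper's approach, while longer, is more in keeping with its ambient setting and makes transparent that the isomorphism comes from $\nu$ being an equivalence of homotopy categories.
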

\begin{proof}
Let $I^\bt \in \KKpi{A}$ be an injective presentation of $X$.
\[
\begin{tikzcd}
 X \ar[r, hookrightarrow] & I^0 \ar[r, "d_I"] & I^1
\end{tikzcd}
\]
Applying $\nu^{-1}$ componentwise, we obtain a sequence $Q^\bt \in \KKpp A$.
\[
\begin{tikzcd}
\nu^{-1} X \ar[r, hookrightarrow, "\iota"] & Q^0 \ar[r, "d_Q"] & Q^1
\end{tikzcd}
\]
Note that we have $X = \ker(d_I)$ and $\nu^{-1}X = \ker(d_Q)$ since $\nu^{-1}$ is left exact.

\textit{Claim.} We have $ \Hom_A(X, Z) \simeq \Hom_{\KKi{A}}(I^\bt, Z)$ and 
                $ \Hom_A(\nu^{-1} X, Z) \simeq \Hom_{\KKp{A}}(Q^\bt, Z)$ for $Z \in \inj A$.

Suppose given $Y \in \rmod A$ and a sequence $0 \to Y \to C^0 \to C^1$ in $\rmod A$ with $\Ker(C^0 \xrightarrow{d} C^1)$.
This gives a complex $C^\bt \in \KK A$ with $C^{k} = 0$ for $k \not\in \{0,\,1\}$.
We show that this implies $\Hom_A(Y, Z) \simeq \Hom_{\KK{A}}(C^\bt, Z)$.

Let $f$ be a non-zero morphism in $\Hom_A(Y, Z)$.
Since $Z$ is injective, there is a morphism $\varphi^0 : C^0 \to Z$ such that the following diagram commutes.
\[
\begin{tikzcd}
Y \ar[r, hookrightarrow, "\iota"] \ar[d, "f"] & C^0 \ar[dl, "\varphi^0" close] \ar[r, "d"] & C^1 \\
Z
\end{tikzcd}
\]
Set $\varphi^k = 0$ for all $k \neq 0$. We obtain $\varphi^\bt \in \Hom_{\KK{A}}(C^\bt, Z)$.

Let $C^0 \xrightarrow{\tilde{\varphi}^0} Z$ be another morphism such that $f = \iota \, \tilde{\varphi}^0$. 
Then $\iota (\varphi^0 - \tilde{\varphi}^0) = 0$ and we obtain the following morphism of complexes.
\[
\begin{tikzcd}
Y \ar[r, "\iota"] \ar[d] & C^0 \ar[r, "d"] \ar[d, "\varphi^0 - \tilde{\varphi}^0"] & C^1 \ar[d] \\
0 \ar[r] & Z \ar[r] & 0 
\end{tikzcd}
\] 
By \cref{Lem:Zero-Hom_At-exact-Pos}$.(1)$ this yields $\varphi^0 - \tilde{\varphi}^0 = 0$ so that $f\varphi^0 = \tilde{\varphi}^0$.
Thus, $f \mapsto f\psi := \varphi^\bt$ defines a $k$-linear map 
\[
\Hom_A(Y, Z) \xrightarrow{\psi} \Hom_{\KK{A}}(C^\bt, Z).
\]
It remains to show that $\psi$ is an isomorphism.
Suppose that $\varphi^\bt = f\psi = 0$ in $\KK{A}$. In this case, there exists a morphism 
$h : C^1 \to Z$ such that $d\, h = \varphi^0$.
However, this implies 
\[
0 = \iota \, d \, h = \iota \, \varphi^0 = f\]
so that $\psi$ is injective.
Now, suppose given a morphism $\varphi^\bt \in \Hom_{\KK{A}}(C^\bt, Z)$. Setting
$f := \iota\,\varphi^0 \in \Hom_A(Y, Z)$,
we obtain $f\psi = \varphi^\bt$ so that $\psi$ is surjective.
This concludes the proof of the claim.

We obtain the following sequence of isomorphisms using that $\nu : \KKp A \to \KKi A$ is an equivalence.
\belowdisplayskip=-13pt
\begin{align*}
\Hom_A(\nu^{-1} X, Z) &\simeq \Hom_{\KKp{A}}(Q^\bt, Z)  \\
	&\simeq \Hom_{\KKi{A}}(I^\bt, \nu Z)                \\
	&\simeq \Hom_A(X, \nu Z)
\end{align*}
\end{proof}

The next lemma provides one part of the functor $\nu_{\mathcal{K}} : \KKpbsp A \to \KKmbp A$. 
Recall that the equivalence $\nu : \proj A \to \inj A$ given by the Nakayama functor 
$\nu(-) = D\big((-)^\ast\big)$ induces an equivalence $\KKp{A} \to \KKi A$ via $F^\bt \mapsto (\nu F^k)_{k \in \ZZ}$.
\begin{Lemma} \label{Lem:nu-equivalence_restricted}
The equivalence of triangulated categories
\[
\nu : \KKp{A} \xrightarrow{\sim} \KKi A
\]
induced by the Nakayama functor restricts to an equivalence of triangulated categories
\[
\KKpbsp A \xrightarrow{\sim} \KKpbi A.
\]
\end{Lemma}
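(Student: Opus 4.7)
The plan is to use that $\nu : \KKp A \to \KKi A$ is already an equivalence of triangulated categories, since it is obtained by applying the additive equivalence $\nu : \proj A \to \inj A$ componentwise to complexes. Because $\KKpbsp A \subseteq \KKp A$ and $\KKpbi A \subseteq \KKi A$ are full triangulated subcategories, closed under isomorphism in their respective ambient categories, it suffices to verify the characterization that $F^\bt \in \KKpbsp A$ if and only if $\nu F^\bt \in \KKpbi A$; the quasi-inverse $\nu^{-1}$ then automatically restricts in the opposite direction.

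First I would note that left boundedness is preserved in both directions by $\nu$ and $\nu^{-1}$, since both functors act componentwise. The substantive step is to relate the homology of $F_\bt^\ast$ to the cohomology of $\nu F^\bt$. By construction $\nu F^\bt$ is obtained from the chain complex $F_\bt^\ast$ by applying the $k$-duality $\Du$ componentwise; since $\Du$ is an exact contravariant functor, it commutes with taking (co)homology, which (after taking into account the chain-to-cochain reindexing adopted in the paper) yields a natural isomorphism
\[
\HH^k(\nu F^\bt) \simeq \Du\bigl(\HH_k(F_\bt^\ast)\bigr)
\]
for every $k \in \ZZ$.

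Since $\Du$ is a $k$-duality, $\Du(M) = 0$ if and only if $M = 0$. Hence the cohomology $\HH^\bt(\nu F^\bt)$ is bounded if and only if the homology $\HH_\bt(F_\bt^\ast)$ is bounded, which is precisely the equivalence of the two membership conditions. Combined with the observation about boundedness on the left, this shows that $\nu$ and $\nu^{-1}$ restrict to mutually quasi-inverse triangulated functors between $\KKpbsp A$ and $\KKpbi A$.

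The main obstacle is essentially notational: the categories $\KKpbsp A$ and $\KKpbi A$ are defined using different conventions (chain versus cochain complexes, and homology versus cohomology), so most of the care goes into making the indexing match when writing down the isomorphism above. Once the identification $\HH^k(\nu F^\bt) \simeq \Du(\HH_k(F_\bt^\ast))$ is in hand, the remainder of the argument is a direct unwinding of definitions.
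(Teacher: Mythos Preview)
Your proposal is correct and follows essentially the same approach as the paper: both arguments use that $\nu = \Du \circ (-)^\ast$ with $\Du$ exact, so the boundedness of $\HH_\bt(F_\bt^\ast)$ is equivalent to the boundedness of $\HH^\bt(\nu F^\bt)$. The only cosmetic difference is that the paper checks the two directions $\nu$ and $\nu^{-1}$ separately, whereas you package the same computation as the single isomorphism $\HH^k(\nu F^\bt) \simeq \Du\bigl(\HH_k(F_\bt^\ast)\bigr)$ and read off the ``if and only if'' directly.
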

\begin{proof}
Recall that $\nu(X) = \Du(X^\ast)$ for an $A$-module $X$.
Suppose given $P^\bt \in \KKpbsp A $. Thus, we have $P_\bt^\ast \in \mathrm{K}^{-,b}(A\text{-proj})$
and we obtain $\Du(P_\bt^\ast) = \nu P^\bt \in \KKpbi A$ since $\Du$ is exact.

Recall that $\nu^{-1}(X) = \big(\Du(X)\big)^\ast$ for an $A$-module $X$.
Suppose given $I^\bt \in \KKpbi A$.
Applying $\Du(-)$ to $I^\bt$ componentwise, we obtain a complex
$\Du I^\bt \in \mathrm{K}^{-,b}(A\text{-proj})$ since $\Du$ is exact.
In particular $\Du I^\bt$ is bounded in cohomology so that $\big(\Du I^\bt\big)^\ast = \nu^{-1} I^\bt \in \KKpbsp A$.

In conclusion, $\nu$ restricts to an equivalence $\KKpbsp A \xrightarrow{\sim} \KKpbi A$.
\end{proof}

Recall that the canonical functors $\KKpl A \to \DDpl A$ and $\KKm A \to \DDm A$ 
induce equivalences of triangulated categories.
\[
\KKpbi A \xrightarrow{\sim} \mathrm{D}^b(\rmod A) \xrightarrow{\sim} \KKmbp A.
\] 
Composing these equivalences with the one from \cref{Lem:nu-equivalence_restricted} 
yields an equivalence of triangulated categories $\KKpbsp A \to \KKpbi A \to \KKmbp A$.
\begin{Definition} \label{Def:nu_K}
We denote the above composite of equivalences by
\[
\nu_{\mathcal{K}} \,\colon\, \KKpbsp A \xrightarrow{\sim} \KKmbp A.			
\]
We say that a triangulated subcategory $\TT$ of $\KK{A}$ is \textit{closed under} $\nu_{\mathcal{K}}$ if 	
the restriction of $\nu_{\mathcal{K}}$ to the full subcategory with objects in 
$\TT \cap \KKpbsp A$ has an essential image in $\TT$.
\end{Definition}

\begin{Remark}
Recall that $\Hp A$ is the full subcategory of $\KKp A$ consisting of all complexes 
$F^\bt \in \KKp A$ such that there exist $l,r \in \ZZ$ with $\HH^{< l}(F^\bt) = 0$ and 
$\HH_{\geqslant r}(F_\bt^\ast) = 0$. In particular, $\KKpbsp A$ and $\KKmbp A$ are subcategories of $\Hp A$.
\begin{itemize}
\item[(1)] Let $\gdim A < \infty$. Then $\Hstp A = \Kstperp A \cap \KKbp A$ and
		$\nu_{\mathcal{K}}$ is equivalent to the derived Nakayama functor $\KKbp{A} \xrightarrow{\sim} \KKbp{A}$.

\item[(2)] If $A$ is self-injective, we will see that $\Hstp A = \HP A = \LL_A$; cf.\ \cref{Thm:Charact_SelfInjective}.
		   Furthermore, the restriction of $\nu_{\mathcal{K}} $ to $\Hstp A \cap \KKpbsp A$ is zero, since
		   all non-zero complexes in $\Hstp A$ are unbounded. Thus, $\LL_A$ is trivially closed under $\nu_{\mathcal{K}}$
		   in this case.
\end{itemize}
\end{Remark}

We aim to show that $\Hstp A$ is the smallest triangulated 
subcategory of $\KKp A$ that contains $\LL_A$ and is closed under $\nu_{\mathcal{K}}$ and under isomorphisms.
It remains to verify that a projective resolution $P^{\leqslant 0}$ of a simple module 
$S \in \stperp A$ is an element of every triangulated category 
that contains $\LL_A$ and is closed under $\nu_{\mathcal{K}}$ and under isomorphisms. 
Our strategy is as follows.

We consider a complex $Q^\bt \in \KKp A$ such that $\nu_{\mathcal{K}} (Q^{> 0}) \simeq P^{\leqslant 0}$ and $Q^{\leqslant 0}$
is a projective resolution of $\nu^{-1}S$. Furthermore, $\nu^{-1}S$ embeds into $Q^1$.
If $Q^1$ is not injective, the result follows from \cref{Lem:ProjRes-in-T_Simple}.
Otherwise, we inductively construct new simple modules $S_k$ which embed into $\nu^{-k}(Q^1)$.
By assumption, $\nu(Q^1)$ cannot be strongly projective-injective, so that this procedure terminates with a $\nu^{-k}(Q^1)$ 
which is not injective.
\begin{Lemma} \label{Lem:ProjRes-in-T_Simple_nu}
Let $\TT$ be a triangulated subcategory of $\KKp{A}$ that contains $\LL_A$ and is closed under $\nu_{\mathcal{K}}$
and under isomorphisms.

Suppose $S$ is a simple $A$-module with minimal projective resolution $P^{\leqslant 0}$. 
If $S \in \stperp A$, then we have $P^{\leqslant 0} \in \TT$.
\end{Lemma}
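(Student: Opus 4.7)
The strategy is to use closure of $\TT$ under $\nu_{\mathcal{K}}$ to trade the projective resolution $P^{\leqslant 0}$ of $S$ for the projective resolution of $\nu^{-1}S$, and then iterate using the hypothesis $S \in \stperp A$ until the situation reduces to one already handled by \cref{Lem:ProjRes-in-T_Simple}.

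First I would construct an auxiliary complex $Q^\bt \in \LL_A$ such that $\tau_{\leqslant 0}Q^\bt$ is the minimal projective resolution of $\nu^{-1}S$, $\nu^{-1}S$ embeds into $Q^1$, and $\nu_{\mathcal{K}}(Q^{>0}) \simeq P^{\leqslant 0}$ in $\KKmbp A$. The natural candidate is Kato's complex $F_{\nu^{-1}S}^\bt$ from \cref{Thm:Equivalence_F}: by construction $\tau_{\geqslant 1}(Q^{\bt,\ast})$ is the minimal projective resolution of the left module $(\nu^{-1}S)^\ast$, which places $Q^{>0}$ in $\KKpbsp A$, and tracing $(\nu^{-1}S)^\ast$ through the equivalence $\nu\colon \KKpbsp A \xrightarrow{\sim} \KKpbi A$ of \cref{Lem:nu-equivalence_restricted} followed by the canonical derived equivalence to $\KKmbp A$ yields $\nu_{\mathcal{K}}(Q^{>0}) \simeq P^{\leqslant 0}$. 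The embedding $\nu^{-1}S \hookrightarrow Q^1$ comes from dualising the surjection $Q^{1,\ast} \twoheadrightarrow (\nu^{-1}S)^\ast$.

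Since $Q^\bt \in \LL_A \subseteq \TT$, the distinguished triangle $Q^{\leqslant 0} \to Q^\bt \to Q^{>0} \to$ reduces the problem to showing $Q^{\leqslant 0} \in \TT$: then $Q^{>0} \in \TT$ and closure of $\TT$ under $\nu_{\mathcal{K}}$ gives $P^{\leqslant 0} \simeq \nu_{\mathcal{K}}(Q^{>0}) \in \TT$. So the task becomes placing the projective resolution of $\nu^{-1}S$ into $\TT$. If $Q^1$ is not injective, I would adapt the proof of \cref{Lem:ProjRes-in-T_Simple}, using the embedding $\nu^{-1}S \hookrightarrow Q^1$ into a non-projective-injective projective in place of an embedding into an injective hull, to produce a distinguished triangle inside $\LL_A$ that witnesses $Q^{\leqslant 0} \in \TT$; \cref{Lem:X_iff_CompFactor-in-P-perp} and \cref{Lem:ProjRes-if-CompFactors-in-T} reduce to composition factors of $\nu^{-1}S$ where necessary.

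If on the other hand $Q^1$ is projective-injective, the hypothesis $S \in \stperp A$ forces $\nu Q^1$ not to be strongly projective-injective, for unwinding $\nu_{\mathcal{K}}(Q^{>0}) \simeq P^{\leqslant 0}$ would otherwise exhibit a non-zero morphism from $S$ into a strongly projective-injective module. Hence there is a least $k \geqslant 1$ with $\nu^{-k}Q^1$ not injective. I then iterate: choose a simple submodule $S_k \subseteq \nu^{-k}Q^1$, which remains in $\stperp A$ by \cref{Lem:X_iff_CompFactor-in-P-perp}, rerun the construction with $S_k$ in place of $S$, and reassemble the resulting chain of distinguished triangles and $\nu_{\mathcal{K}}$-applications upward so that after finitely many steps the original $P^{\leqslant 0}$ is placed in $\TT$. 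The main obstacle is this projective-injective case: making the descent precise, ensuring that the simples $S_k$ and their auxiliary complexes stay inside a setup where the hypothesis $\stperp A$ keeps being applicable, and verifying that the telescoping triangles close back up to the original $P^{\leqslant 0}$.
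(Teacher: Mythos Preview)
Your overall strategy matches the paper's, but two steps have genuine gaps.

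First, the choice $Q^\bt = F_{\nu^{-1}S}^\bt$ does not give $\nu_{\mathcal{K}}(Q^{>0}) \simeq P^{\leqslant 0}$. With this choice, $\tau_{\geqslant 1}(Q^{\bt,\ast})$ resolves $(\nu^{-1}S)^\ast$, so $\nu Q^{>0}$ is an injective resolution of $D((\nu^{-1}S)^\ast) = \nu\nu^{-1}S$, which is not isomorphic to $S$ when $S$ is non-injective. (Relatedly, the map $\nu^{-1}S \to Q^1$ you obtain has kernel $\HH^0(F_{\nu^{-1}S}^\bt)$, so it need not be an embedding.) The paper instead starts from the minimal injective presentation $S \hookrightarrow \nu Q^1 \to \nu Q^2$, places $Q^1 \to Q^2$ in degrees $1,2$, and extends to a complex $Q^\bt$ with $Q^\bt[2] \in \LL_A$; then $\nu Q^{>0}$ is an injective resolution of $S$ by construction, while $Q^{\leqslant 0}$ is still a projective resolution of $\nu^{-1}S = \Ker(Q^1 \to Q^2)$.

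Second, the descent ``choose a simple submodule $S_k \subseteq \nu^{-k}Q^1$'' does not work: an arbitrary socle component of a projective module has no reason to lie in $\stperp A$, and \cref{Lem:X_iff_CompFactor-in-P-perp} does not apply since $\nu^{-k}Q^1 \notin \stperp A$. There is also no link from such an $S_k$ back to $Q^{\leqslant 0}$. The missing ingredient is \cref{Lem:nu-fully-faithful_special-case}: for $Z \in \PP_A$ one has $\Hom_A(\nu^{-1}S, Z) \simeq \Hom_A(S, \nu Z)$. Since $S$ is simple with injective hull $\nu Q^1$, this gives $\dim_k \Hom_A(\nu^{-1}S, Q^1) = 1$ and $\Hom_A(\nu^{-1}S, Z) = 0$ for every other indecomposable $Z \in \PP_A$. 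Hence exactly one composition factor $S_0$ of $\nu^{-1}S$ maps to $Q^1$, all others lie in $\Pperp_A$ (so their resolutions are in $\TT$ by \cref{Lem:ProjRes-in-T_Simple}), and \cref{Lem:ProjRes-if-CompFactors-in-T} reduces $Q^{\leqslant 0} \in \TT$ to the resolution of $S_0$. Moreover $S_0 \in \stperp A$, since otherwise its injective hull $Q^1$ would be strongly projective-injective, forcing $\nu Q^1 \in \stp A$ and contradicting $S \in \stperp A$. Iterating with $S_0$ terminates once $\nu^{-k}Q^1$ is not injective; at that stage the same Hom computation shows the relevant $\nu^{-1}S_{k-1}$ lies in $\Pperp_A$ outright, so no adaptation of \cref{Lem:ProjRes-in-T_Simple} is needed.
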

\begin{proof}
Suppose that $S$ is injective. Then we have $S^\ast = 0$, otherwise $S$ is projective and thus 
strongly projective-injective, a contradiction.
Hence, $F_S^\bt$ is the minimal projective resolution of $S$ which is contained in $\LL_A$.

For the remainder of the proof we assume that $S$ is not injective.
Let $S \hookrightarrow \nu Q^1 \rightarrow \nu Q^2$ be the minimal injective presentation of $S$ with $Q^1, Q^2 \in \proj A$.

Extend $Q^1 \rightarrow Q^2$ to an element $Q^\bt[2] \in \LL_A$. 
Then the truncation $Q^{\leqslant 0}$ is the minimal projective resolution of $\nu^{-1} S = \ker{\left( Q^1 \rightarrow Q^2 \right)}$.
\[
\begin{tikzcd}
\cdots \ar[r] & Q^{-1} \ar[r] & Q^0 \ar[rr] \ar[dr, twoheadrightarrow] & & Q^1 \ar[r] & Q^2 \ar[r] & \cdots \\
 & & & \nu^{-1} S \ar[ur, hookrightarrow] &
\end{tikzcd}
\]
Note that $Q^{> 0} \in \KKpbsp A$.
Applying $\nu_{\mathcal{K}}$ yields $\nu_{\mathcal{K}} (Q^{> 0}) \simeq P^{\leqslant 0}$ in $\KKmbp A$ since $\nu Q^{> 0}$ is an injective resolution of $S$.
Hence, $Q^{> 0} \in \TT$ implies $P^{\leqslant 0} \in \TT$ since $\TT$ is closed under $\nu_{\mathcal{K}}$.
Moreover, we have the distinguished triangle 
\[
Q^{> 0} \rightarrow Q^\bt \rightarrow Q^{\leqslant 0} \rightarrow Q^{> 0}[1]
\]
so that $Q^{> 0} \in \TT$ if and only if $Q^{\leqslant 0} \in \TT$ since $\TT$ is closed under isomorphisms.
It remains to show that $Q^{\leqslant 0} \in \TT$.
If $\nu^{-1}S = 0$, we have $Q^{\leqslant 0} = 0$ and we are done at this point.

Note that $\dim_k \Hom_A(S, \nu Q^1) = 1$ and $\dim_k \Hom_A(S, I) = 0 $ for any injective module $I \not\simeq \nu Q^1$ 
since $S$ is simple. Suppose that $\nu Q^1$ is projective-injective. 
Otherwise $S \in {}^\perp \mathcal{P}$ and thus $P^{\leqslant 0} \in \TT$ by \cref{Lem:ProjRes-in-T_Simple}.
By assumption, $\nu Q^1$ is not strongly projective-injective.

\textit{Claim.} It suffices to consider the case that $Q^1 \in \proj A$ is not injective.

Assume that $Q^1 \in \PP_A$. For every $Z \in \PP_A$ with $Z \not\simeq Q^1$, 
that is $\nu Z \not\simeq \nu Q^1$, we have
\begin{align*}
\dim_k \Hom_A(\nu^{-1} S, Q^1) &= \dim_k \Hom_A(S, \nu Q^1) = 1 \\
\dim_k \Hom_A(\nu^{-1} S, Z) &= \dim_k \Hom_A(S, \nu Z) = 0
\end{align*}
by \cref{Lem:nu-fully-faithful_special-case}. 
Therefore, there exists a unique composition factor $S_0$ of $\nu^{-1} S$ with an embedding into $Q^1$.
Furthermore, every other composition factor $S'$ of $\nu^{-1} S$ lies in $\Pperp_A$.

By \cref{Lem:ProjRes-in-T_Simple} this means that the minimal projective resolution of every composition factor 
which is not isomorphic to $S_0$ is an element of $\TT$.
Therefore, by \cref{Lem:ProjRes-if-CompFactors-in-T}, the minimal projective resolution 
$Q^{\leqslant 0}$ of $\nu^{-1} S$ is an element of $\TT$
if the minimal projective resolution of $S_0$ is an element of $\TT$.
Note that $S_0 \in {}^\perp \stp A$. If not, then $Q^1$ must be strongly projective-injective 
and therefore $\nu Q^1$ as well so that $S \not\in \stperp A$. 

Now we can repeat the process described above for $S_0$ instead of $S$ and $Q^1$ instead of $\nu Q^1$.
Inductively, for $k \geq 0$, this results in a simple module $S_k \in \stperp A$ which is a composition 
factor of $\nu^{-1}(S_{k-1})$. Furthermore, $S_k$ embeds into $\nu^{-k}(Q^1)$.
Since $A$ is finite dimensional and $Q^1$ is not strongly projective-injective, 
there exists a $k \in \ZZ$ such that $\nu^{-k}(Q^1)$ is projective but not injective.
Thus, it suffices to show that the minimal projective resolution of $S_k$ is in $\mathcal T$.
This concludes the proof of the claim.

As a result, we can assume that $Q^1$ is not injective.
We show that $\nu^{-1} S \in \Pperp_A$.
If not, there is a $Z \in \PP_A$ such that $\dim_k \Hom(\nu^{-1} S, Z) \neq 0$.
However, using that $\nu Q_1$ is the injective hull of $S$, we have
\[
\dim_k \Hom(\nu^{-1} S, Z) \simeq \dim_k \Hom(S, \nu Z) = 0
\]
since $Z \not\simeq Q^1 \not\in \PP_A$, that is $\nu Z \not\simeq \nu Q^1$.
This yields $\nu^{-1} S \in \Pperp_A$.
Hence, the minimal projective resolution $Q^{\leqslant 0}$ of $\nu^{-1} S$ is an element of $\TT$ 
by \cref{Lem:ProjRes-in-T_Simple}.
\end{proof}

We are now ready to prove the main result of this section.
\begin{Theorem} \label{Thm:Triangulated-Hull-of-L_Closed-under-nu}
Let $A$ be a finite dimensional $k$-algebra.

The category $\Hstp A$ is the smallest triangulated subcategory of 
$\KKp{A}$ that contains $\LL_A$ and is closed under $\nu_{\mathcal{K}}$ and under isomorphisms.
\end{Theorem}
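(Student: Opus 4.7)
The plan is to mirror the structure of the proof of \cref{Thm:Triangulated-Hull-of-L}, substituting $\stp A$ for $\PP_A$ throughout and replacing the appeal to \cref{Lem:ProjRes-in-T_Simple} by the stronger \cref{Lem:ProjRes-in-T_Simple_nu}. First I would verify that $\Hstp A$ itself is a triangulated subcategory of $\KKp A$ containing $\LL_A$, closed under isomorphisms, and closed under $\nu_{\mathcal{K}}$. The first three properties are immediate from \cref{Rem:Properties_H-HP-Hstp} combined with the chain $\LL_A \subseteq \HP A \subseteq \Hstp A$.

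For closure of $\Hstp A$ under $\nu_{\mathcal{K}}$, I would take $F^\bt \in \Hstp A \cap \KKpbsp A$ and $Z^\bt \in \Kstp A$, and establish
\[
\Hom_{\KKp A}(\nu_{\mathcal{K}}(F^\bt), Z^\bt) \simeq \Hom_{\KKi A}(\nu F^\bt, Z^\bt) \simeq \Hom_{\KKp A}(F^\bt, \nu^{-1} Z^\bt) = 0.
\]
Here the first isomorphism uses that $\nu_{\mathcal{K}}(F^\bt) \simeq \nu F^\bt$ in $\mathcal{D}^b(\rmod A)$ together with the fact that a right-bounded complex of projectives and a bounded complex of injectives compute the same Hom in the homotopy and derived categories; the second is the equivalence $\nu \colon \KKp A \xrightarrow{\sim} \KKi A$; and the final equality uses $F^\bt \in \Kstperp A$ after observing that $\nu^{\pm 1}$ restricts to auto-equivalences of $\stp A$ (so that $\nu^{-1} Z^\bt \in \Kstp A$). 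This gives $\nu_{\mathcal{K}}(F^\bt) \in \Hstp A$.

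For the converse direction, let $\TT$ be any triangulated subcategory of $\KKp A$ containing $\LL_A$, closed under isomorphisms and under $\nu_{\mathcal{K}}$. Given $F^\bt \in \Hstp A$, \cref{Lem:H-iff-Complex_in-perp}.(2) yields $\HH^k(F^\bt) \in \stperp A$ for all $k \in \ZZ$. I would then invoke \cref{Lem:Reduction-To-ProjRes} to reduce to showing that the minimal projective resolution of every $A$-module in $\stperp A$ lies in $\TT$; \cref{Lem:ProjRes-if-CompFactors-in-T} combined with \cref{Lem:X_iff_CompFactor-in-P-perp} (applied with $\mathcal{I} = \stp A$) then reduces further to the case of simple modules in $\stperp A$, which is handled by \cref{Lem:ProjRes-in-T_Simple_nu}.

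The main obstacle I anticipate is the verification that $\Hstp A$ itself is closed under $\nu_{\mathcal{K}}$, since this is the only part of the argument without a direct analogue in the proof of \cref{Thm:Triangulated-Hull-of-L} and requires juggling between the homotopy category, the bounded derived category, and the $\nu$-duality between projectives and injectives. Once this is in place, the converse inclusion proceeds exactly as in the previous section, with the already-established technical input \cref{Lem:ProjRes-in-T_Simple_nu} taking over the role that \cref{Lem:ProjRes-in-T_Simple} played before.
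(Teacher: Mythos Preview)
Your proposal is correct and follows essentially the same route as the paper. The reduction to simple modules in $\stperp A$ via \cref{Lem:H-iff-Complex_in-perp}, \cref{Lem:Reduction-To-ProjRes}, \cref{Lem:ProjRes-if-CompFactors-in-T}, \cref{Lem:X_iff_CompFactor-in-P-perp}, and finally \cref{Lem:ProjRes-in-T_Simple_nu} is exactly what the paper does. The only cosmetic difference is in the verification that $\Hstp A$ is closed under $\nu_{\mathcal{K}}$: the paper argues in one line that $\nu_{\mathcal{K}}(\Kstp A) \simeq \Kstp A$ (since $\nu$ restricts to an autoequivalence of $\stp A$), whence the equivalence $\nu_{\mathcal{K}}$ carries $\Kstperp A$ into $\Kstperp A$, and then notes $\KKmbp A \cap \Kstperp A \subseteq \Hstp A$; your explicit Hom computation through the derived category unpacks precisely this statement.
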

\begin{proof}
Note that $\HP A \subseteq \Hstp A \subseteq \Hp A$ and $\stp A \subseteq \PP_A$. 
By \cref{Lem:L-is-in-K_perp}.(3), we have that $\LL_A\subseteq \HP A \subseteq \Hstp A$. 
Furthermore, $\Hstp A$ is a triangulated subcategory of $\KKp A$ that is closed under isomorphisms
by \cref{Rem:Properties_H-HP-Hstp}.

We show that $\Hstp A$ is closed under $\nu_{\mathcal{K}}$. Let $F^\bt \in \Hstp A \cap \KKpbsp A$. 
Since we have $\nu_{\mathcal{K}} \left( \Kstp A \right) \simeq \Kstp A $, this implies 
$\nu_{\mathcal{K}} F^\bt \in \Kstperp A$.
Since $\KKmbp A \cap \Kstperp A$ is contained in $\Hstp A$, we obtain $\nu_{\mathcal{K}}(F^\bt) \in \Hstp A$.
In conclusion, $\Hstp A$ is a triangulated category that contains $\LL_A$ and is closed under $\nu_{\mathcal{K}}$
and under isomorphisms. 

Suppose that $\TT$ is another triangulated subcategory of $\KKp{A}$ that contains $\LL_A$ and is closed 
under $\nu_{\mathcal{K}}$ and under isomorphisms.
We show that $\Hstp A \subseteq \TT$.

By \cref{Lem:H-iff-Complex_in-perp} and \cref{Lem:Reduction-To-ProjRes} it suffices to show that the minimal projective 
resolution of every $A$-module $X \in \stperp A$ is an element of $\TT$.
Moreover, by \cref{Lem:ProjRes-if-CompFactors-in-T} it suffices to show that the minimal projective resolution of every 
composition factor of $X$ is an element of $\TT$. Let $S$ be such a composition factor.
Then $S$ is an element of $\stperp A$ by \cref{Lem:X_iff_CompFactor-in-P-perp}.
Using \cref{Lem:ProjRes-in-T_Simple_nu}, we now obtain that the minimal projective resolution of $S$ is an element of $\TT$.
\end{proof}

Similarly as for $\HP A$, we can characterize $\Hstp A$ using right perpendicular categories;
cf.\ \cref{Rem:HP-as-RightPerp}.
\begin{Remark} \label{Rem:Hstp-as-RightPerp}
The following are equivalent for a complex $F^\bt \in \KKp A$.
\begin{itemize}
\item[(1)] $F^\bt \in \Kstperp A$.
\item[(2)] $H^k(F^\bt) \in \stperp A$ for all $k \in \ZZ$.
\item[(3)] $H^k(F^\bt) \in (\stp A)^\perp$ for all $k \in \ZZ$.
\item[(4)] $F^\bt \in \mathcal{K}^b(\stp A)^\perp$.
\end{itemize}
In particular, we have $F^\bt \in \Hstp A$ if and only if $F^\bt \in \mathcal{K}^b(\stp A)^\perp$ and $F^\bt \in \Hp A$.
\end{Remark}

We also note the following for the Auslander-Reiten quiver of $\DDb A$.
\begin{Remark}   \label{Rem:Hstp_and_AR-components}  
Suppose that $A$ has finite global dimension.
An Auslander-Reiten triangle in $\KKbp A$ is of the following form;
cf.\ \cite[Theorem 1.4]{Happel_Triangles}.
\[
\nu(F^\bt)[-1] \to G^\bt \to F^\bt \to 
\]
Thus, $\Hstp A$ is the union of some Auslander-Reiten components in $\Hp A \simeq \KKbp A$.
\end{Remark}

We observe the following for the case of $\HP A = \Hstp A$.
\begin{Lemma} \label{Lem:Charact_nu-domdim>0}
We have $\HP A = \Hstp A$ if and only if $\PP_A = \stp A$.
\end{Lemma}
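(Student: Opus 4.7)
The plan is to prove both directions, of which the forward is immediate and the converse requires the construction of an explicit separating complex. If $\PP_A = \stp A$, then $\KP A = \Kstp A$ and hence $\KPperp A = \Kstperp A$; the equality $\HP A = \Hstp A$ then follows straight from \cref{Def:Category_H}.

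For the converse I would argue contrapositively, assuming $\stp A \subsetneq \PP_A$ and exhibiting an object of $\Hstp A$ that is not in $\HP A$. Since the Nakayama functor respects direct sums, $\stp A$ is closed under direct summands, so there exists an \emph{indecomposable} $Z \in \PP_A \setminus \stp A$. Such a $Z$ is the injective envelope $E(S)$ of its simple socle $S = \soc Z$, and this $S$ will be the separator at the module level. The inclusion $S \hookrightarrow Z \in \PP_A$ shows $S \notin \Pperp_A$. For the crux that $S \in \stperp A$, note that for any $Y \in \stp A$, a nonzero morphism from the simple $S$ into the injective module $Y$ would force $E(S) = Z$ to be an indecomposable direct summand of $Y$; then summand-closedness of $\stp A$ would put $Z$ into $\stp A$, contradicting the choice of $Z$.

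Finally, I would take the minimal projective resolution $P^{\leqslant 0}$ of $S$, a complex in $\KKp A$ whose only nonzero cohomology is $S$ in degree $0$. Membership $P^{\leqslant 0} \in \Hp A$ is then automatic, and \cref{Lem:H-iff-Complex_in-perp} translates the perpendicularity conditions $P^{\leqslant 0} \in \Kstperp A$ and $P^{\leqslant 0} \in \KPperp A$ directly into the module-level conditions $S \in \stperp A$ and $S \in \Pperp_A$, respectively. Thus $P^{\leqslant 0}$ lies in $\Hstp A$ but not in $\HP A$, so the two categories differ. The only genuine content is the indecomposable reduction together with the socle argument; the passage from the module $S$ to the complex $P^{\leqslant 0}$ is a formal application of \cref{Lem:H-iff-Complex_in-perp}.
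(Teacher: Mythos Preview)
Your proof is correct and follows essentially the same route as the paper: both take an indecomposable $Z \in \PP_A \setminus \stp A$, show that $S = \soc Z$ lies in $\stperp A$ but not in $\Pperp_A$, and then invoke \cref{Lem:H-iff-Complex_in-perp}. Your version is slightly more explicit in naming the projective resolution $P^{\leqslant 0}$ of $S$ as the separating complex, whereas the paper compresses this step into the claim that $\HP A = \Hstp A$ forces $\KPperp A = \Kstperp A$; both amount to the same thing.
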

\begin{proof}
Suppose that $\PP_A = \stp A$. Then we have $\KPperp A = \Kstperp A$ so that $\HP A = \Hstp A$.

On the other hand, suppose that $\HP A = \Hstp A$. In particular, this means that $\KPperp A = \Kstperp A$.
Assume that $Z \in \PP_A$ is indecomposable and not an element of $\stp A$. 
Then we have $\soc(Z) \not\simeq \soc(Z')$ for all $Z' \in \stp A$ indecomposable.
We obtain $\Hom_A(\soc(Z),Z') = 0$ for all $Z' \in \stp A$ indecomposable. Hence, $\soc(Z) \in \stperp A$.
Using \cref{Lem:H-iff-Complex_in-perp}, the assumption $\KPperp A = \Kstperp A$ implies that $\soc(Z) \in \Pperp_A$.
However, we have $\Hom_A(\soc(Z),Z) \neq 0$, a contradiction. 
\end{proof}

Let $e$ be a basic idempotent element in $A$ such that $\mathrm{add}(eA) = \stp A$.
The algebra $eAe$ is called an \textit{associated self-injective algebra}; 
cf.\ \cite[Section 4]{DugasMartinezVilla_MoritaType}.
We give a characterization of $\Hstp A$ inside $\Hp A$ using the algebra $eAe$.
\begin{Lemma} \label{Lem:Charact_Hstp-In-Hp}
Let $e$ be a basic idempotent element in $A$ such that $\mathrm{add}(eA) = \stp A$ and suppose that $F^\bt \in \KK{A}$. 
Then $F^\bt \in \Kstperp A$ if and only if $(Fe)^\bt \in \KK{eAe}$ is acyclic.

Suppose that $F^\bt \in \Hp A$. Then $F^\bt \in \Hstp A$ if and only if $(Fe)^\bt \in \KK{eAe}$ is acyclic.
\end{Lemma}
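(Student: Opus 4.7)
The plan is to reduce both sides to a module-level condition on the $eAe$-module $\HH^k(F^\bt)\,e$. By \cref{Lem:H-iff-Complex_in-perp}.(2), we have $F^\bt \in \Kstperp A$ if and only if $\HH^k(F^\bt) \in \stperp A$ for every $k \in \ZZ$. On the other side, $Ae$ is projective as a left $A$-module (being a direct summand of ${}_A A$), so the functor $(-)e \simeq -\otimes_A Ae : \rmod A \to \rmod eAe$ is exact; hence $\HH^k((Fe)^\bt) \simeq \HH^k(F^\bt)\,e$, and $(Fe)^\bt$ is acyclic if and only if $\HH^k(F^\bt)\,e = 0$ for every $k$. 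It therefore suffices to prove the module-level claim that an $A$-module $X$ lies in $\stperp A$ if and only if $Xe = 0$.

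For this, the key observation is that $\nu_A$ restricts to an auto-equivalence of $\stp A$. Indeed, if $Z \in \stp A$, then $\nu^k(\nu Z) = \nu^{k+1}Z$ is projective for $k \geq 0$, so $\nu Z \in \stp A$; likewise the projective $\nu^{-1} Z$ satisfies $\nu^k(\nu^{-1} Z) = \nu^{k-1} Z$ projective for all $k \geq 0$, so $\nu^{-1} Z \in \stp A$. Since $\nu$ permutes the indecomposable summands of the basic module $eA$, we get $\mathrm{add}(\nu(eA)) = \stp A$. Using $\nu(eA) = \Du((eA)^\ast) = \Du(Ae)$ together with the tensor-hom adjunction gives the natural isomorphism
\[
\Hom_A(X, \nu(eA)) \simeq \Hom_A(X, \Du(Ae)) \simeq \Du(X \otimes_A Ae) \simeq \Du(Xe).
\]
If $X \in \stperp A$, then $\Du(Xe) = \Hom_A(X, \nu(eA)) = 0$, so $Xe = 0$. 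Conversely, if $Xe = 0$, then $\Hom_A(X, \nu(eA)) = 0$, and since every $Z \in \stp A = \mathrm{add}(\nu(eA))$ is a direct summand of some power of $\nu(eA)$, this vanishing extends to $\Hom_A(X,Z) = 0$ for all $Z \in \stp A$, i.e.\ $X \in \stperp A$.

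The second assertion is then immediate from the first and the definition $\Hstp A = \Hp A \cap \Kstperp A$: for $F^\bt \in \Hp A$, membership in $\Hstp A$ is equivalent to $F^\bt \in \Kstperp A$, which by the first part is equivalent to $(Fe)^\bt$ being acyclic. The only real obstacle is the stability of $\stp A$ under $\nu$, which is precisely what allows one to convert the covariant condition $\Hom_A(X,-)|_{\stp A} = 0$ into the right-sided vanishing $Xe = 0$ through the duality $\nu(eA) \simeq \Du(Ae)$; without this, the generator $eA$ of $\stp A$ would not directly give access to $Xe$.
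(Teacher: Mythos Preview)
Your proof is correct and follows essentially the same approach as the paper: both reduce to the module-level equivalence $X \in \stperp A \iff Xe = 0$ via exactness of $(-)e$ and \cref{Lem:H-iff-Complex_in-perp}, and both establish this via the isomorphism $\Du(Xe) \simeq \Hom_A(X,\nu(eA))$ together with $\nu(eA) \in \stp A$. The only cosmetic difference is that the paper uses the standard adjunction $\Hom_A(eA,X) \simeq \Du\Hom_A(X,\nu(eA))$ and invokes $\nu(eA)\simeq eA$ directly, while you spell out the tensor--hom route and justify why $\nu$ restricts to an autoequivalence of $\stp A$.
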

\begin{proof} 
We use that $(Fe)^\bt \in \KK{eAe}$ is acyclic if and only if $\HH^k\big((Fe)^\bt\big) = 0$ for all $k \in \ZZ$.

Since $Ae$ is a projective left $A$-module, the functor $- \otimes_A Ae$ is exact. 
Thus, we have $\HH^k\big((Fe)^\bt\big) \simeq \HH^k(F^\bt)e$.
By assumption, $\nu(eA) \simeq eA$ so that
\[
\HH^k(F^\bt)e \simeq \Hom_A\big(eA,\, \HH^k(F^\bt)\big) \simeq \Du\,\Hom_A\big(\HH^k(F^\bt), \nu(eA)\big) \simeq \Du\, \Hom_A\big(\HH^k(F^\bt),\, eA).
\]
Note that $\Hom_A\big(\HH^k(F^\bt),\, eA) = 0$ if and only if $\HH^k(F^\bt) \in \stperp A = {}^\perp\big(\mathrm{add}(eA)\big)$.
Furthermore, we have $\HH^k(F^\bt) \in \stperp A$ for all $k \in \ZZ$ if and only if we have
$F^\bt \in \Kstperp A$ by \cref{Lem:H-iff-Complex_in-perp}.
\end{proof}

In \cite[Theorem 4.3]{FangHuKoenig_DerivedEquiv}, Fang, Hu and Koenig show that a derived equivalence
$\DDb A \to \DDb B$ restricts to an equivalence $\Kstp A \to \Kstp B$,
provided the two given algebras have $\nu$-dominant dimension at least one.
In particular, the associated self-injective algebras are derived equivalent in this setting.

Recall that $\Hp A \simeq \KKbp A \simeq \DDb A$, if $\gdim A < \infty$.
We have the following corollary of \cite[Theorem 4.3]{FangHuKoenig_DerivedEquiv} for our situation.
\begin{Corollary} \label{Cor:Kstp-characteristic_finite-gdim}
Let $A$ and $B$ be derived equivalent $k$-algebras, both of $\nu$-dominant dimension at least $1$.
Assume that $A$ and $B$ have finite global dimension.

Then any derived equivalence $\KKbp A \to \KKbp B$ restricts to an
equivalence of triangulated subcategories $\Hstp A \to \Hstp B$.
 
In particular, $\Hstp A = \HP A$ is a characteristic subcategory of $\KKbp A$.
\end{Corollary}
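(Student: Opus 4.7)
The plan is to combine three ingredients: the cited result of Fang, Hu and Koenig, the identification $\Hp A \simeq \KKbp A$ available under finite global dimension, and a short argument showing that $\nu$-dominant dimension at least $1$ forces $\PP_A = \stp A$.

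First, I would invoke \cite[Theorem 4.3]{FangHuKoenig_DerivedEquiv}. Under the hypotheses $\nddim A, \nddim B \geq 1$, this tells us that any equivalence of triangulated categories $F : \KKbp A \to \KKbp B$ restricts to an equivalence $\Kstp A \to \Kstp B$. Being an equivalence of triangulated categories, $F$ then automatically sends the left perpendicular subcategory of $\Kstp A$ inside $\KKbp A$ onto the left perpendicular subcategory of $\Kstp B$ inside $\KKbp B$, and analogously for the quasi-inverse. This is a purely formal consequence of the definition of the perpendicular category, valid since $F$ is fully faithful and essentially surjective.

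Second, under the assumption $\gdim A < \infty$, \cref{Rem:Properties_H-HP-Hstp}.(3) gives $\Hp A \simeq \KKbp A$, so
\[
\Hstp A \;=\; \Hp A \cap \Kstperp A \;\simeq\; \KKbp A \cap \Kstperp A,
\]
and analogously for $B$. Combined with the previous step, this yields the desired restricted equivalence $\Hstp A \to \Hstp B$. Since this works for every derived equivalence between $A$ and $B$, the subcategory $\Hstp A$ is characteristic in $\KKbp A$.

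For the equality $\HP A = \Hstp A$, by \cref{Lem:Charact_nu-domdim>0} it suffices to show that the hypothesis $\nddim A \geq 1$ forces $\PP_A = \stp A$. Let $Z \in \PP_A$ be indecomposable. As $Z$ is projective, it is a direct summand of $A^n$ for some $n$, hence embeds into $A^n$. The hypothesis $\nddim A \geq 1$ states that the injective hull $I^0$ of $A$ is strongly projective-injective, and therefore so is the injective hull $(I^0)^n$ of $A^n$. Composing the embedding $Z \hookrightarrow A^n \hookrightarrow (I^0)^n$ and using that $Z$ is injective, the inclusion splits, so $Z$ is a direct summand of a strongly projective-injective module and is therefore itself strongly projective-injective.

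The main obstacle is essentially a bookkeeping one: correctly translating the statement of Fang--Hu--Koenig (which concerns the homotopy category of strongly projective-injective modules) into the perpendicular language used here, and checking that both the equivalence on $\Kstp$ and its formal consequence on the perpendicular subcategory respect the finite-global-dimension identification $\Hp A \simeq \KKbp A$. No step requires new machinery beyond the results already established in the excerpt.
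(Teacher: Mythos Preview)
Your proposal is correct and matches the paper's intended (implicit) argument: the corollary is stated without proof, as an immediate consequence of \cite[Theorem 4.3]{FangHuKoenig_DerivedEquiv} together with the identifications already established, and your reconstruction---passing from the equivalence on $\Kstp A$ to its left perpendicular, using $\Hp A \simeq \KKbp A$ under finite global dimension, and invoking \cref{Lem:Charact_nu-domdim>0} with the observation that $\nddim A \geq 1$ forces $\PP_A = \stp A$---is exactly how one fills in the details. The paper confirms the last step in the proof of \cref{Thm:Kstp-characteristic_in_H}, where it cites \cref{Lem:Charact_nu-domdim>0} for the same conclusion under the same hypothesis.
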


We aim to state a similar result for equivalences $\Hp A \to \Hp B$ 
without a restriction on the global dimension of $A$.
We follow the strategy used in \cite{FangHuKoenig_DerivedEquiv}.
As the main tool for proving the above theorem, they introduce the following subcategory.
\[
\mathscr{X}_A := \left\lbrace P^\bt \in \Kstp A | P^\bt \simeq \nu_A(P^\bt) \text{ in } \DDb A \right\rbrace
\]
Suppose that $A$ has $\nu$-dominant dimension at least $1$.
By \cite[Proposition 4.2]{FangHuKoenig_DerivedEquiv}, $\Kstp A$ is the smallest triangulated 
full subcategory of $\KKbp A$ that contains $\mathscr{X}_A$ and is closed under taking direct summands.

Furthermore, we need a way to restrict an equivalence $\Hp A \to \Hp B$ to bounded complexes. 
For this, we adapt the characterization of $\KKbp A$ inside $\KKmbp A$.
A complex $X^\bt \in \KKmbp A$ is an element of $\KKbp A$ if and only if for all $Y^\bt \in \KKmbp A$
there exists an $N \in \ZZ$ such that $\Hom_{\KKp A}(X^\bt,Y^\bt[-n]) = 0$ if $n < N$.
\begin{Lemma} \label{Lem:Kbproj_in_H}
Let $X^\bt \in \Hp A$.
\begin{itemize}
\item[(1)] We have $X^\bt \in \KKpbsp A$ if for all $Y^\bt \in \Hp A$ there exists an $N \in \ZZ$ such that
           $\Hom_{\Hp A}(X^\bt, Y^\bt[-n]) = 0$ if $n < N$.

\item[(2)] We have $X^\bt \in \KKmbp A$ if for all $Y^\bt \in \Hp A$ there exists an $N \in \ZZ$ such that 
           $\Hom_{\Hp A}(Y^\bt[n], X^\bt) = 0$ if $n < N$.

\item[(3)] Suppose that $X \in \Kstp A$. Let $Y^\bt \in \Hp A$. 
          Then there exists an $N \in \ZZ$ such that $\Hom_{\Hp A}(Y^\bt[n], X^\bt) = 0$ 
          and $\Hom_{\Hp A}(X^\bt, Y^\bt[-n]) = 0$ if $n < N$.
\end{itemize}
\end{Lemma}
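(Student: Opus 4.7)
The plan is to address the three items in sequence, relying on the pointwise Hom-vanishing of \cref{Lem:Zero-Hom_At-exact-Pos} together with the induction from the proof of \cref{Lem:L-is-in-K_perp}. Item (3) is direct; items (1) and (2) are dual to each other and require deducing boundedness of $X^\bt$ from the Hom-vanishing hypothesis through a combination of testing against $Y^\bt = A$ and a truncation-and-split argument leveraging the $\Hp A$-conditions.

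For (3), fix $a \leqslant b$ with $X^k = 0$ outside $[a,b]$, and let $l, r$ be integers with $\HH^{<l}(Y^\bt) = 0$ and $\HH_{\geqslant r}(Y_\bt^\ast) = 0$. Setting $N := \min(l - b,\; a - r)$, for $n < N$ both the shifted cohomology $\HH^k(Y^\bt[n]) = \HH^{k+n}(Y^\bt)$ and the shifted dualized homology $\HH_k((Y^\bt[-n])^\ast) = \HH_{k-n}(Y_\bt^\ast)$ vanish for $k \in [a,b]$. Each $X^k$ is injective, so \cref{Lem:Zero-Hom_At-exact-Pos}.(1) gives $\Hom_{\KK A}(Y^\bt[n], X^k) = 0$, and the induction on the number of non-zero components of $X^\bt$ from the claim inside the proof of \cref{Lem:L-is-in-K_perp} yields $\Hom_{\KK A}(Y^\bt[n], X^\bt) = 0$. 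For the reverse Hom, the contravariant isomorphism $\Hom_{\KKp A}(X^\bt, Y^\bt[-n]) \simeq \Hom_{\KKpleft A}((Y^\bt[-n])^\ast, X^{\bt,\ast})$ reduces the statement to the parallel argument on left $A$-modules, using that $X^{\bt,\ast}$ is a bounded complex of injective left $A$-modules, since $\nu X^k$ being projective forces $X^{k,\ast}$ to be injective as a left module.

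For (1), apply the hypothesis with $Y^\bt = A$ concentrated in degree $0$: a direct computation yields $\Hom_{\KKp A}(X^\bt, A[-n]) \simeq \HH_n(X_\bt^\ast)$, so the hypothesis forces $\HH_n(X_\bt^\ast) = 0$ for $n < N$. Combined with $\HH_{\geqslant r}(X_\bt^\ast) = 0$ from $\Hp A$, the dualized homology has finite support. A truncation-and-split argument applied to $X^{\bt,\ast}$ in $\KKpleft A$ then yields a chain-bounded representative, which transferred back via $(-)^\ast$ gives $X^\bt \in \KKpbsp A$. For (2), the symmetric argument applied directly to $X^\bt$: testing with $Y^\bt = A$ via $\Hom_{\KKp A}(A[n], X^\bt) \simeq \HH^{-n}(X^\bt)$ gives bounded cohomology of $X^\bt$, and a truncation-and-split argument on the high-cochain-degree tail of $X^\bt$ produces a right-bounded representative, so $X^\bt \in \KKmbp A$.

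The principal technical obstacle in (1) and (2) is the truncation-and-split step: showing that, at a correctly chosen cut-off point, the corresponding tail of $X^\bt$ (or of $X^{\bt,\ast}$) is an actually exact subcomplex, so that the classical contractibility of one-sided bounded exact complexes of projectives can be invoked. The key is to choose the cut-off exploiting the interplay between the (co)homology vanishing obtained from the Hom-hypothesis and the existing $\Hp A$-bound, ensuring exactness of the subcomplex that then splits off in $\KKp A$.
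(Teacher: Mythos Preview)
Your argument for (3) is correct and essentially coincides with the paper's: both use the boundedness of $X^\bt$ together with the left cohomology bound on $Y^\bt$ for one direction and the right homology bound on $Y_\bt^\ast$ for the other, via \cref{Lem:Zero-Hom_At-exact-Pos} and \cref{Lem:L-is-in-K_perp}. Your observation that $X^{k,\ast}$ is injective on the left precisely when $\nu X^k$ is projective is what the paper encodes by invoking \cref{Lem:L-is-in-K_perp}.(1').

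For (1) and (2), however, there is a genuine gap. Testing only with $Y^\bt = A$ gives you, say in (1), that $\HH_n(X_\bt^\ast) = 0$ for $n < N$, and hence that $X_\bt^\ast$ has bounded homology. But your truncation-and-split step cannot succeed from this information alone: a non-zero object of $\KKptac A \subseteq \LL_A \subseteq \Hp A$ satisfies $\HH^k(X^\bt) = 0$ and $\HH_k(X_\bt^\ast) = 0$ for \emph{all} $k$, yet is unbounded on both sides and not contractible. So neither any stupid truncation of $X^\bt$ nor of $X^{\bt,\ast}$ is forced to be split exact by the vanishing you have obtained. In other words, the hypothesis of (1) involves \emph{all} $Y^\bt \in \Hp A$, and $Y^\bt = A$ is simply too weak a probe: it cannot distinguish a totally acyclic complex from zero.

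The paper circumvents this by choosing a single, richer test complex, namely the minimal projective resolution $Y^\bt$ of the direct sum $S$ of all simple $A$-modules, and arguing by contrapositive. If $X^\bt$ is not left bounded up to homotopy, then for $n \ll 0$ the kernel $\ker(d_X^{n+1})$ is not projective; one then constructs a map $X^\bt \to Y^\bt[-n]$ induced by the surjection $\ker(d_X^{n+1}) \twoheadrightarrow \ker(d_X^{n+1})/\rad \hookrightarrow S$, and checks that it cannot be null-homotopic (otherwise the kernel would be forced to be projective). Part (2) is then obtained from part (1) by applying $(-)^\ast$. The essential point you are missing is a test object that detects non-projectivity of the syzygies of $X^\bt$, rather than merely the vanishing of its dual homology.
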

\begin{proof}
\textit{Ad (1).} It suffices to show, that $X^\bt$ is bounded on the left.
    
    We assume that $X^\bt$ is unbounded on the left. Since $X^\bt \in\Hp A$, there exists an $N_0 \in \ZZ$ such that
	$\HH^n(X^\bt) = 0$ for $n < N_0$.
	Furthermore, there exists an $N < N_0$ such that $\ker(d_X^n)$ is not projective for $n < N$. 
	Otherwise the complex $X^\bt$ would be isomorphic to a complex which is left bounded by removing split direct summands.
	
    Let $\{S_1,\, \dots,\, S_l \}$ be a complete set of pairwise non-isomorphic simple $A$-modules. 
    For their direct sum, we write $S:= \bigoplus_{i=1}^l S_i$. Let $Y^\bt$ be the minimal projective resolution of $S$. 
    Note that $Y^\bt$ is an element of $\Hp A$.
	We show that $\Hom_{\Hp A}(X^\bt, Y^\bt[-n]) \neq 0$ for $n < N$. 

	Let $n < N$. We write $X:= \HH^n(\tau_{\leqslant n} X^\bt) = \Ker(d_X^{n+1}) \in \rmod A$ using that $n+1 \leq N < N_0$. 
	Suppose that $f$ is the composite of the natural projection $p: X \to X/\rad(X)$ and the natural embedding 
	$X/\rad(X) \to S$.
	Since $\HH^k(X^\bt) = 0$ and $\HH^k(Y^\bt[-n]) = 0$ for $k < n$, the morphism $f$ 
	lifts to a morphism $f^\bt$ of complexes.
	\[
	\begin{tikzcd}[row sep = .6cm]
	X^\bt \ar[ddd, "f^\bt"] & \cdots \ar[r] & X^{n-1} \ar[r] \ar[ddd, "f^{-1}"] & X^{n} \ar[rr] \ar[ddd, "f^0"] \ar[dr, twoheadrightarrow] & & X^{n+1} \ar[r] \ar[ddd] & \cdots \\
	& & & & X \ar[d, "f"] \ar[ur, hookrightarrow] \\
	& & & & S \\
	Y^\bt[-n]& \cdots \ar[r] & Y^{-1} \ar[r] & Y^0 \ar[rr] \ar[ur, twoheadrightarrow] & & 0 \ar[r] & \cdots 
	\end{tikzcd}		
	\]
	Assume that $f^\bt = 0$. This implies that $f$ factors through 
	the projective cover $P$ of $X/\rad(X) = \im(f)$. Since $p$ is surjective, we obtain a morphism $g : P \to X$.
	\[
     \begin{tikzcd}
                  & P \ar[d] \ar[ld, dashed, "g"'] \\
     X \ar[r,"p"] & X/\rad(X)
     \end{tikzcd}     	
	\]
	Using that $X/\rad(X) \simeq P/\rad(P)$, we obtain that $g$ is surjective so that $X$ is a direct summand of $P$.
	This is a contradiction to the choice of $\ker(d_X^{n+1}) = X$ as non-projective.
	Therefore, the morphism $f^\bt : X^\bt \rightarrow Y^\bt[-n]$ is non-zero.
	
\textit{Ad (2).} 
	We have $X^\bt \in \KKmbp A$ if and only if $X^\ast_\bt \in \KKpbspleft A$.
	
	We rename $X^\ast_\bt$ as $U^\bt$ via $U^k := X^\ast_{-k} \in \KKpbspleft A$
	so that $U^\bt[1]$ shifts the complex to the left. In contrast, $X^\ast_\bt[1]$ is a shift to the right.
    \[
    \begin{tikzcd}[column sep = .5cm, row sep = .4cm]
    U^\bt = X^\ast_\bt :         &               & \cdots \ar[r]            & U^{-1} = X^\ast_1 \ar[r]      & U^0 = X^\ast_0 \ar[r]           & U^1 = X^\ast_{-1} \ar[r] & \cdots \\
    U^\bt[1] = X^\ast_\bt[-1] :  & \cdots \ar[r] & U^{-1} = X^\ast_1 \ar[r] & U^0 = X^\ast_0 \ar[r]         & U^1 = X^\ast_{-1} \ar[r]        & \cdots                   &         
    \end{tikzcd}
    \]
	Since part (1) also holds for left
	$A$-modules, it suffices to show that for all $V^\bt \in \Hpleft A$ 
	there exists an $N \in \ZZ$ such that we have
	$\Hom_{\Hpleft A}(U^\bt, V^\bt[-n]) = 0$ if $n < N$.
	
    Let $V^\bt \in \Hpleft A$ and write $V^k = Y^\ast_{-k} \in \Hpleft A$ for a complex $Y^\bt \in \Hp A$. 
    By assumption, we have an $N \in \ZZ$ such that the following holds for $n<N$.
	\[
	0 =      \Hom_{\Hpleft A}(Y^\bt[n], X^\bt) 
	  \simeq \Hom_{\Hpleft A}(X^\ast_\bt, Y^\ast_\bt[n]) 
	  =      \Hom_{\Hpleft A}(U^\bt, V^\bt[-n]) 
	\]
\textit{Ad (3).} Suppose given $X^\bt \in \Kstp A$ and $Y^\bt \in \Hp A$. 
    Without loss of generality we may assume that $X^k = 0$ for $k > 0$.
    Let $l \in \ZZ_{\geqslant 0}$ such that $X^k = 0$ for $k < -l$.

    Let $N_1 \in \ZZ$ such that $\HH^n(Y^\bt) = 0$ for all $n < N_1$.
    Using that $X^{> 0} = 0$, \cref{Lem:L-is-in-K_perp}.(1) implies that
    $\Hom_{\Hp A}(Y^\bt[n],X^\bt) = 0$ if $n < N_1$.
    \[
     \begin{tikzcd}[column sep = .5cm]
     Y^\bt[n] \ar[d] &  \cdots \ar[r] & Y^{n-l-1} \ar[r] \ar[d] & Y^{n-l} \ar[r] \ar[d] & Y^{n-l+1} \ar[r] \ar[d] & \cdots \ar[r] & Y^{n-1} \ar[r] \ar[d] & Y^n \ar[r] \ar[d] & Y^{n+1} \ar[r] \ar[d] & \cdots \\  
     X^\bt           &                & 0 \ar[r]                & X^{-l} \ar[r]          & X^{l-1} \ar[r]          & \cdots \ar[r] & X^{-1} \ar[r]         & X^0 \ar[r]        & 0
     \end{tikzcd}         
    \]
    Let $N_2 \in \ZZ$ such that $\HH_{-n}(Y_\bt^\ast) = 0$ for $n < N_2$. 
    Using that $X^k = 0$ for $k < -l$, \cref{Lem:L-is-in-K_perp}.(1') implies that     
    $\Hom_{\Hp A}(X^\bt, Y^\bt[-n+l]) = 0$ if $n < N_2$.
    Note that $X_\bt^\ast \in \KKbileft A$ since $X^\bt \in \Kstp A$. 
    \[ \hspace*{-.3cm}
     \begin{tikzcd}[column sep = .38cm]
     X^\bt \ar[d] &                & 0 \ar[r] \ar[d]  & X^{-l} \ar[r] \ar[d] & X^{-l+1} \ar[r]\ar[d] & \cdots \ar[r] & X^{-1} \ar[r] \ar[d] & X^0 \ar[r] \ar[d] & 0 \ar[d]          &        \\
     Y^\bt[-n+l]  &  \cdots \ar[r] & Y^{-n-1} \ar[r]  & Y^{-n} \ar[r]        & Y^{-n+1} \ar[r]       & \cdots \ar[r] & Y^{-n+l-1} \ar[r]    & Y^{-n+l} \ar[r]   & Y^{-n+l+1} \ar[r] & \cdots  
     \end{tikzcd}         
    \]
    Let $N := \min\{N_1,N_2-l\}$. Then $\Hom_{\Hp A}(Y^\bt[n], X^\bt) = 0$ and 
    $\Hom_{\Hp A}(X^\bt, Y^\bt[-n]) = 0$ if $n < N$.
\end{proof}

In \cite[Proposition 5.2]{Rickard_DerEquivDerFunct} Rickard has shown that any standard derived equivalence
commutes with the Nakayama functor.
It seems unclear whether the same holds true for an equivalence between $\Hp A$ and $\Hp B$.
Therefore, we add a further assumption in contrast to \cite[Theorem 4.3]{FangHuKoenig_DerivedEquiv}.
In case that $\gdim A < \infty$, we are in the situation of \cite[Theorem 4.3]{FangHuKoenig_DerivedEquiv} where
the additional steps of the following theorem are not needed.
\begin{Theorem} \label{Thm:Kstp-characteristic_in_H}
Suppose given two finite dimensional $k$-algebras $A$ and $B$ both with $\nu$-dominant dimension at least $1$.
Let $\alpha : \Hp A \to \Hp B$ be a triangulated equivalence such that there is a natural isomorphism 
$\nu_B(\alpha(X^\bt)) \simeq \alpha(\nu_A X^\bt)$ for all $X \in \Kstp A$.

Then $\alpha$ restricts to an equivalence $\Kstp A \simeq \Kstp B$.
Moreover, $\alpha$ restricts to an equivalence between $\HP A = \Hstp A$ and $\HP B = \Hstp B$.
\end{Theorem}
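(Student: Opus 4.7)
The plan is to restrict $\alpha$ in successive stages: first to an equivalence of bounded homotopy categories $\KKbp A\to\KKbp B$, then to $\Kstp A\to\Kstp B$, and finally to $\Hstp A\to\Hstp B$ and $\HP A\to\HP B$. The three key ingredients are \cref{Lem:Kbproj_in_H} (which encodes boundedness by Hom-vanishing in $\Hp$), the hypothesis that $\alpha$ intertwines the Nakayama functors on $\Kstp A$, and the generation result \cite[Proposition 4.2]{FangHuKoenig_DerivedEquiv} of Fang, Hu, and Koenig recalled above.

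For the first stage, given $X^\bt\in\KKbp A$, the Hom-vanishing conditions of \cref{Lem:Kbproj_in_H}.(1) and (2) against arbitrary $Y^\bt\in\Hp A$ are satisfied by standard bounded-cohomology considerations ($X^\bt$ has finite support, and $Y^\bt$ has cohomology bounded on one side). Since $\alpha$ is an equivalence, every $Y'^\bt\in\Hp B$ arises as $\alpha(Y^\bt)$, so these Hom-vanishing conditions transfer verbatim to $\alpha(X^\bt)$ against $\Hp B$. Applying \cref{Lem:Kbproj_in_H}.(1),(2) in $B$ places $\alpha(X^\bt)$ in $\KKpbsp B\cap\KKmbp B=\KKbp B$, and by symmetry $\alpha$ restricts to an equivalence $\KKbp A\to\KKbp B$.

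For the second stage, by Fang-Hu-Koenig's generation result it suffices to show $\alpha(\mathscr{X}_A)\subseteq\Kstp B$. For $P^\bt\in\mathscr{X}_A\subseteq\Kstp A$ we have $P^\bt\simeq\nu_A P^\bt$ in $\DDb A$, hence in $\KKbp A$ (which embeds fully faithfully into $\DDb A$). The hypothesis then gives $\alpha(P^\bt)\simeq\alpha(\nu_A P^\bt)\simeq\nu_B\alpha(P^\bt)$ in $\KKbp B$, exhibiting $\alpha(P^\bt)$ as a bounded complex of projectives over $B$ fixed by the derived Nakayama functor. Invoking Fang-Hu-Koenig's analysis of $\mathscr{X}_B$ (available because $\nddim B\geq 1$) concludes $\alpha(P^\bt)\in\mathscr{X}_B\subseteq\Kstp B$; the symmetric argument for $\alpha^{-1}$ completes the restricted equivalence $\Kstp A\to\Kstp B$.

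For the last stage, $\Hstp A=\Hp A\cap\Kstperp A$ is characterized within $\Hp A$ by Hom-vanishing into $\Kstp A$; since $\alpha$ bijects $\Kstp A$ with $\Kstp B$, the same transfer argument yields $\alpha(\Hstp A)=\Hstp B$. To identify $\HP A$ with $\Hstp A$, note that $\nddim A\geq 1$ means $A\hookrightarrow I^0$ with $I^0\in\stp A$; any indecomposable projective-injective $P$ is a summand of $A$, embeds into $I^0$, and (being injective) splits off as a summand of $I^0$, hence lies in $\stp A$. Thus $\PP_A=\stp A$ and \cref{Lem:Charact_nu-domdim>0} gives $\HP A=\Hstp A$; likewise for $B$. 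The main obstacle will be the second stage, specifically the conclusion that $\alpha(P^\bt)\in\Kstp B$: unlike the boundedness conditions, this componentwise structural property is not obviously preserved by a Hom-vanishing argument alone, and I expect to need explicit use of Fang-Hu-Koenig's machinery to promote the abstract derived Nakayama-fixed-point property into genuine strong projective-injectivity of components.
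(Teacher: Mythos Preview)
Your first stage contains a genuine gap. \Cref{Lem:Kbproj_in_H}.(1) and (2) are one-directional implications: they say that the Hom-vanishing conditions \emph{imply} membership in $\KKpbsp A$ or $\KKmbp A$, not conversely. The converse direction is precisely the content of part (3), and it is only established there for $X^\bt \in \Kstp A$, where injectivity of the terms (and of their duals) is essential to invoke \cref{Lem:L-is-in-K_perp}.(1) and (1'). Your claimed ``standard bounded-cohomology considerations'' do not supply the converse for arbitrary $X^\bt \in \KKbp A$: take $X^\bt = P$ a stalk of a single projective and $Y^\bt = F_M^\bt \in \LL_A \subseteq \Hp A$. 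Since $P$ is projective one computes $\Hom_{\KKp A}(P, Y^\bt[-n]) \simeq \Hom_A(P, \HH^{-n}(Y^\bt))$, and for $n \to -\infty$ this probes $\HH^k(F_M^\bt)$ for $k \to +\infty$, which need not vanish. Dually, $\Hom_{\KKp A}(Y^\bt[n], P)$ for $n \to -\infty$ is controlled by $\HH_k(Y^\ast_\bt)$ in large negative degree, i.e.\ by $\Ext^k_A(M,A)$ for $k \to +\infty$, which again need not vanish. So neither Hom-vanishing condition holds for a general bounded complex, and there is no reason for $\alpha$ to restrict to an equivalence $\KKbp A \to \KKbp B$.

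The paper avoids this entirely by never passing through $\KKbp A$. It applies \cref{Lem:Kbproj_in_H}.(3) directly to $X^\bt \in \Kstp A$ to obtain the Hom-vanishing conditions, transfers them through the equivalence $\alpha$, and then invokes parts (1) and (2) on the $B$ side to land $\alpha(X^\bt)$ in $\KKbp B$. From there your second and third stages are essentially the paper's argument: show $\alpha(\mathscr{X}_A) \subseteq \mathscr{X}_B$ via the Nakayama compatibility, invoke \cite[Proposition 4.2]{FangHuKoenig_DerivedEquiv} to get $\alpha(\Kstp A) \subseteq \Kstp B$, symmetrize, and pass to the orthogonal. The fix is simply to delete the first stage and start the boundedness argument with $X^\bt \in \Kstp A$ rather than $X^\bt \in \KKbp A$.
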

\begin{proof}
Suppose given $X^\bt \in \Kstp A$. By \cref{Lem:Kbproj_in_H}.$(3)$, there is an $N \in \ZZ$ such that
we have $\Hom_{\Hp A}(Y^\bt[n], X^\bt) = 0$ and $\Hom_{\Hp A}(X^\bt, Y^\bt[-n]) = 0$ 
for all $n < N$ and $Y^\bt \in \Hp A$. 

Let $n < N$ and $Z^\bt \in \Hp B$. 
Since $\alpha$ is an equivalence, there exists a $Y^\bt \in \Hp A$ with $\alpha(Y^\bt) = Z^\bt$.
We have the following.
\begin{align*}
&\Hom_{\Hp B}(Z^\bt[n], \alpha(X^\bt)) \simeq \Hom_{\Hp A}(Y^\bt[n], X^\bt) = 0 \\
&\Hom_{\Hp B}(\alpha(X^\bt), Z^\bt[-n]) \simeq \Hom_{\Hp A}(X^\bt, Y^\bt[-n]) = 0.
\end{align*}
Hence, by \cref{Lem:Kbproj_in_H}.$(1,2)$, we obtain $\alpha(X^\bt) \in \KKbp B$.

Now, suppose that $X^\bt \in \mathscr{X}_A$.
By assumption, we have $\alpha(X^\bt) \simeq \alpha(\nu_A(X^\bt)) \simeq \nu_B(\alpha(X^\bt))$,
which implies that $\alpha(X^\bt) \in \mathscr{X}_B$.
In conclusion, we have shown, that $\alpha(\mathscr{X}_A) \subseteq \mathscr{X}_B$.
By \cite[Proposition 4.2]{FangHuKoenig_DerivedEquiv}, we therefore obtain $\alpha(\Kstp A) \subseteq \Kstp B$.

Let $\beta$ be a quasi-inverse of $\alpha$. Repeating the arguments above, we obtain
$\beta(\Kstp B) \subseteq \Kstp A$.
Together, we can conclude that $\alpha$ induces an equivalence $\Kstp A \simeq \Kstp B$.

Recall that $\Hstp A$ is the full subcategory of $\Hp A$ with objects in $\Kstperp A$. 
Hence, $\alpha$ also induces an equivalence $\Hstp A \simeq \Hstp B$.
Since $\nddim A \geq 1$ and $\nddim B \geq 1$, we have $\Hstp A = \HP A$ and $\Hstp B = \HP B$;
cf.\ \cref{Lem:Charact_nu-domdim>0}. 
\end{proof}

\section{Self-injective algebras}
\label{Sec:SelfInjective}

In this short section, we discuss the case of self-injective algebras.
Recall that the category $\stmod A$ is triangulated, if $A$ is self-injective.

\begin{Lemma} \label{Lem:F-TriangulatedFunctor}
Let $A$ be self-injective.

Then $\LL_A$ is a triangulated subcategory of $\KKp A$ and
$\FF : \stmod A \to \LL_A$ is an equivalence of triangulated categories.
\end{Lemma}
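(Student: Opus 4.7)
My plan is to proceed in two steps: first, show that $\LL_A$ coincides with $\KKptac A$, the category of totally acyclic complexes of projectives, which is automatically a triangulated subcategory of $\KKp A$; second, upgrade the categorical equivalence $\FF : \stmod A \to \LL_A$ from \cref{Thm:Equivalence_F} to a triangle equivalence by verifying commutativity with the suspension and with the standard triangles.

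For the first step, the key observation is that self-injectivity of $A$ makes $(-)^\ast = \Hom_A(-,A)$ an exact functor, since $A$ is injective as a right $A$-module. Given $F^\bt \in \LL_A$, the truncation $\tau_{\leq 0} F^\bt$ is a projective resolution of $X := \HH^0(\tau_{\leq 0} F^\bt)$, so applying the exact functor $(-)^\ast$ produces an exact sequence $0 \to X^\ast \to F^{0,\ast} \to F^{-1,\ast} \to \cdots$. Combined with $\HH_0(F^\ast_\bt) = 0$ this forces $\im(d_F^{0,\ast}) = X^\ast$; dualizing once more, again using exactness of $(-)^\ast$ and that $(-)^{\ast\ast} \simeq \id$ on the relevant finitely generated modules for self-injective $A$, yields $\ker(d_F^0) = \im(d_F^{-1})$, hence $\HH^0(F^\bt) = 0$. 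Iterating the same argument in every degree, and symmetrically on the dual side, I obtain that both $F^\bt$ and $F^\ast_\bt$ are acyclic, so $F^\bt \in \KKptac A$. The reverse inclusion is immediate from the defining conditions. Equivalently, one may note that Kato's $F_X^\bt$ agrees with the classical Tate complete resolution of $X$ obtained by splicing a projective resolution with an injective (hence projective) resolution of $X$, which is manifestly totally acyclic. Standard long-exact-sequence-of-cohomology arguments show $\KKptac A$ is closed under shift and cones in $\KKp A$, so $\LL_A$ is a triangulated subcategory.

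For the second step, every distinguished triangle in $\stmod A$ arises (up to isomorphism) from a short exact sequence $0 \to X \to Y \to Z \to 0$ in $\rmod A$, and since $(-)^\ast$ is exact every such sequence is perfect exact. Hence \cref{Prop:PerfSeq-DistTriang} delivers a distinguished triangle $F_X^\bt \to F_Y^\bt \to F_Z^\bt \to$ in $\LL_A$, so $\FF$ sends distinguished triangles to distinguished triangles. Applying this to the injective hull sequence $0 \to X \to I(X) \to \Omega^{-1} X \to 0$, with $I(X)$ both injective and, by self-injectivity, projective, so that $F_{I(X)}^\bt \simeq 0$ in $\LL_A$, I extract a natural isomorphism $F_{\Omega^{-1} X}^\bt \simeq F_X^\bt[1]$, identifying the suspensions of $\stmod A$ and $\LL_A$; naturality in $X$ follows from functoriality of the injective hull and of Kato's construction.

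The main subtlety I anticipate is the bookkeeping around projective direct summands: Kato's construction $X \mapsto F_X^\bt$ literally yields $F_I^\bt = I$ in degree zero for a projective $I$, which is not the zero object of $\KKp A$. The standard remedy, consistent with the hypotheses of \cref{Prop:PerfSeq-DistTriang}, is to always choose representatives of objects of $\stmod A$ without projective direct summands before applying Kato's construction, under which $F_I^\bt = 0$ in $\LL_A$.
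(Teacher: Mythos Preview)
Your proposal is correct and follows essentially the same approach as the paper: use exactness of $(-)^\ast$ to identify $\LL_A$ with the acyclic complexes in $\KKp A$, then invoke \cref{Prop:PerfSeq-DistTriang} together with the fact that every short exact sequence is perfect exact to handle both the suspension and the triangles. The only notable difference is that your first step is more elaborate than necessary: the paper observes directly that exactness of $(-)^\ast$ (in both directions, using reflexivity of projectives) gives $\HH^k(F^\bt)=0 \Leftrightarrow \HH_k(F_\bt^\ast)=0$ for any complex of projectives, so the two defining conditions of $\LL_A$ immediately collapse to full acyclicity without passing through truncations or the module $X$; your suspension argument via the injective-hull sequence is a valid alternative to the paper's direct inspection of the projective resolution of $\Omega^{-1}(X)$.
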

\begin{proof}
If $A$ is self-injective, $\Hom_A(-,A)$ is exact so that we have
\begin{align*}
\HH^{<0}(F^\bt) &= 0 \Leftrightarrow \mathrm{H}^{<0}(F_\bt^\ast) = 0 \\
\HH^{\geqslant 0}(F_\bt^\ast) &= 0 \Leftrightarrow \mathrm{H}^{\geqslant 0}(F^\bt) = 0.
\end{align*}
Together, we obtain that 
$\LL_A = \{ F^\bt \in \mathrm{K(proj}\,A) \;\vert\; \mathrm{H}^{k}(F^\bt) = 0, \, k\in \ZZ \}$.
Hence, $\LL_A$ is closed under shifts.
As another consequence, a morphism $f^\bt : F^\bt \to G^\bt$ in $\LL_A$ 
is a quasi-isomorphism and therefore $C(f)^\bt \in \LL_A$.
In conclusion, $\LL_A$ is a triangulated subcategory of $\KKp A$.
It remains to show that $\FF$ is triangulated.

Suppose given $X \in \stmod A$ with $F_{\Omega^{-1}(X)}^\bt \in \LL_A$. Then
\[
\cdots \to F_{\Omega^{-1}(X)}^{-2} \to F_{\Omega^{-1}(X)}^{-1} \to F_{\Omega^{-1}(X)}^0 \to 0
\]
is a projective resolution of $\Omega^{-1}(X)$. 
Therefore, we have that 
$\HH^{-1}\left(\tau_{\leqslant -1}\, F_{\Omega^{-1}(X)}^\bt\right) \stackrel{\mathrm{st}}{\simeq} X$ so that 
\hbox{$F_X^\bt \cong F_{\Omega^{-1}(X)}^{\bt}[-1]$} or equivalently 
$F_X^{\bt}[1] \cong F_{\Omega^{-1}(X)}^\bt$. Hence, $\FF$ commutes with the shift.

If $A$ is self-injective, every  short exact sequence is perfect exact. 
Moreover, every distinguished triangle in $\stmod A$ is induced by a short exact sequence.
Therefore, \cref{Prop:PerfSeq-DistTriang} shows that $\FF$ maps distinguished triangles in $\stmod A$ 
to distinguished triangles in $\LL_A$.
\end{proof}

Recall that $\KKptac A$ denotes the category of totally acyclic complexes in $\KKp A$.
That is, $F^\bt \in \KKptac A$ if $\HH^\bt (F^\bt) = 0$ and $\HH_\bt(F_\bt^\ast) = 0$.

\begin{Lemma} \label{Lem:KKptac_LargestTriangSubcat}
Suppose given $F^\bt \in \LL_A$. We have $F^\bt \in \KKptac A$ if $F^\bt[k] \in \LL_A$ for all $k \in \ZZ$.
In particular, the category $\KKptac A$ is the largest subcategory of $\LL_A$ that is triangulated as a subcategory of $\KKp A$.
\end{Lemma}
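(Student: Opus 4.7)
The plan is to translate the condition $F^\bt[k] \in \LL_A$ into a condition on the cohomology of $F^\bt$ and the homology of $F^\ast_\bt$, and then let $k$ run over all integers.

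More concretely, using the identity $\HH^n(F^\bt[k]) = \HH^{n+k}(F^\bt)$, the condition $\HH^{<0}(F^\bt[k]) = 0$ is equivalent to $\HH^{<k}(F^\bt) = 0$. Since $(F^\bt[k])^\ast_\bt$ is just $F^\ast_\bt$ shifted accordingly, a direct computation of the dualized differential shows $\HH_n\!\big((F^\bt[k])^\ast_\bt\big) = \HH_{n+k}(F^\ast_\bt)$ (the sign $(-1)^k$ in the shifted differential does not affect homology), so the condition $\HH_{\geqslant 0}\!\big((F^\bt[k])^\ast_\bt\big) = 0$ is equivalent to $\HH_{\geqslant k}(F^\ast_\bt) = 0$. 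Therefore, $F^\bt[k] \in \LL_A$ for every $k \in \ZZ$ implies $\HH^{<k}(F^\bt) = 0$ and $\HH_{\geqslant k}(F^\ast_\bt) = 0$ for every $k \in \ZZ$. Letting $k \to \infty$ yields $\HH^\bt(F^\bt) = 0$, and letting $k \to -\infty$ yields $\HH_\bt(F^\ast_\bt) = 0$. Hence $F^\bt \in \KKptac A$, which proves the first claim.

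For the second assertion, I would first verify that $\KKptac A$ is itself a triangulated subcategory of $\KKp A$ contained in $\LL_A$. The inclusion $\KKptac A \subseteq \LL_A$ is immediate from the definitions. Closure under shifts is clear; closure under mapping cones follows from the long exact sequence in cohomology applied to a distinguished triangle $F^\bt \to G^\bt \to C(f)^\bt \to$ in $\KKp A$, together with the componentwise split exact sequence $0 \to G^\ast_\bt \to C(f)^\ast_\bt \to F^\ast_\bt[1] \to 0$ obtained by dualizing.

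Finally, suppose $\TT$ is any triangulated subcategory of $\KKp A$ with $\TT \subseteq \LL_A$. Being triangulated as a subcategory of $\KKp A$, $\TT$ is closed under the shift functor of $\KKp A$. Hence for every $F^\bt \in \TT$ and every $k \in \ZZ$ we have $F^\bt[k] \in \TT \subseteq \LL_A$, and the first part of the lemma forces $F^\bt \in \KKptac A$. This shows $\TT \subseteq \KKptac A$, and combined with the preceding paragraph we conclude that $\KKptac A$ is the largest such subcategory. No real obstacle is anticipated; the only subtlety is keeping track of which direction of $k$ controls which vanishing condition.
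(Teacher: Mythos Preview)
Your proposal is correct and follows essentially the same approach as the paper: both translate the shift condition into vanishing of $\HH^k(F^\bt)$ and $\HH_k(F^\ast_\bt)$ at each degree and then use closure under shifts to conclude $\TT \subseteq \KKptac A$. The only difference is that you additionally verify that $\KKptac A$ is itself a triangulated subcategory of $\KKp A$, a point the paper takes for granted; this makes your argument slightly more self-contained but is otherwise inessential.
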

\begin{proof}
Suppose given $F^\bt \in \LL_A$ with $F^\bt[k] \in \LL_A$ for all $k \in \ZZ$.
We have the following for all $k \in \ZZ$.
\begin{align*}
\HH^k(F^\bt) = \HH^{-1}(F^\bt[k+1]) &= 0 \\
\HH_k((F^\ast)_\bt) = \HH_0((F^\ast)_\bt[k]) = \HH_0\left((F^\bt[k])^\ast\right) &= 0
\end{align*}
In conclusion, $F^\bt \in \KKptac A$.

Let $\TT$ be a subcategory of $\LL_A$ that is triangulated. In particular, $\TT$ is closed under shifts. 
Now, the above shows that $\TT$ is contained in $\KKptac A$.
\end{proof}

The connection between the different categories discussed so far can be visualized as follows.
\[
\begin{tikzcd}[column sep = .5cm]
\KKptac A \ar[r, hookrightarrow]  & \LL_A \ar[r, hookrightarrow] & \HP A \ar[r, hookrightarrow] & \Hstp A \ar[r, hookrightarrow] & \Hp A \ar[r, hookrightarrow] & \KKp A 
\end{tikzcd}
\]
With the exception of $\LL_A$, all of these categories are triangulated for all finite dimensional algebras.
Moreover, in general, all inclusions are proper.
However, some of these categories coincide if and only if $A$ is self-injective.

\begin{Theorem} \label{Thm:Charact_SelfInjective} 
The following are equivalent for a finite dimensional algebra $A$.
\begin{itemize}
\item[(1)] $A$ is self-injective.

\item[(2)] $\LL_A$ is a triangulated subcategory of $\KKp A$.

\item[(3)] $\LL_A = \HP A$.

\item[(4)] $\LL_A$ is closed under taking shifts in $\KKp A$.

\item[(5)] $\LL_A = \KKptac A$.
\end{itemize}
If one of the above conditions holds, $\mathcal{F} : \stmod A \to \LL_A$ is an equivalence of triangulated categories.
Furthermore, we have $\KKptac A = \LL_A = \HP A = \Hstp A$.
\end{Theorem}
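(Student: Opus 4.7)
The plan is to prove the cycle of implications $(1) \Rightarrow (2) \Rightarrow (4) \Rightarrow (5) \Rightarrow (1)$, together with the separate equivalence $(2) \Leftrightarrow (3)$. Most of these implications reduce immediately to results already established in the preceding sections. The main new work, and the main obstacle, will be $(5) \Rightarrow (1)$: extracting self-injectivity of $A$ from the purely homological statement $\LL_A = \KKptac A$.

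For the routine implications I would argue as follows. The implication $(1) \Rightarrow (2)$ is precisely \cref{Lem:F-TriangulatedFunctor}. Since the defining conditions of $\LL_A$ depend only on cohomology and dual homology, $\LL_A$ is automatically closed under isomorphisms in $\KKp A$; assuming $(2)$, it is then triangulated and closed under isomorphisms, so \cref{Thm:Triangulated-Hull-of-L} forces $\HP A \subseteq \LL_A$, which combined with $\LL_A \subseteq \HP A$ from \cref{Lem:L-is-in-K_perp}.(3) yields $(3)$. The converse $(3) \Rightarrow (2)$ is immediate because $\HP A$ is triangulated by \cref{Rem:Properties_H-HP-Hstp}.(1). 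The implication $(2) \Rightarrow (4)$ is trivial since any triangulated subcategory is closed under the shift functor, and $(4) \Rightarrow (5)$ follows from \cref{Lem:KKptac_LargestTriangSubcat}: closure under shifts guarantees that every $F^\bt \in \LL_A$ satisfies $F^\bt[k] \in \LL_A$ for all $k \in \ZZ$, whence $F^\bt \in \KKptac A$; the reverse inclusion is unconditional.

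The central step is $(5) \Rightarrow (1)$. Suppose $\LL_A = \KKptac A$, and let $X \in \rmod A$ have no projective direct summand. Then $F_X^\bt$ is totally acyclic, so in particular $\HH^0(F_X^\bt) = 0$. Since $\tleq F_X^\bt$ is the minimal projective resolution of $X$, the differential $d^0$ factors as $F_X^0 \twoheadrightarrow X \xrightarrow{\bar d^{\,0}} F_X^1$, and a short computation gives $\HH^0(F_X^\bt) \simeq \Ker(\bar d^{\,0})$. Vanishing of this kernel embeds $X$ into the projective module $F_X^1$. Allowing projective summands trivially, every right $A$-module embeds into a projective module, so every injective right $A$-module splits off from a projective and is itself projective. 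In particular $\Du({}_AA)$, the injective cogenerator of $\rmod A$, is a projective right $A$-module; applying $\Du$ once more, ${}_AA \simeq \Du\Du({}_AA)$ is an injective left $A$-module as the $k$-dual of a projective right module. Hence $A$ is self-injective.

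It remains to dispatch the closing claims. Under any of $(1)$--$(5)$, \cref{Lem:F-TriangulatedFunctor} provides the triangulated equivalence $\FF : \stmod A \to \LL_A$, and the equalities $\KKptac A = \LL_A = \HP A$ are precisely $(5)$ and $(3)$. For $\HP A = \Hstp A$, self-injectivity forces every projective to be injective and $\nu_A$ to restrict to an auto-equivalence of $\proj A$, so $\stp A = \PP_A = \proj A$; \cref{Lem:Charact_nu-domdim>0} then gives $\HP A = \Hstp A$, completing the full chain of equalities.
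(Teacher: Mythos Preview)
Your proof is correct and follows the same cycle of implications as the paper, invoking the same preparatory lemmas (\cref{Lem:F-TriangulatedFunctor}, \cref{Thm:Triangulated-Hull-of-L}, \cref{Lem:KKptac_LargestTriangSubcat}) at the same places. The one genuine difference is the argument for $(5)\Rightarrow(1)$. The paper appeals to the characterization from \cite[IV, Proposition~3.4]{AuslanderReitenSmalo} that $A$ is self-injective if and only if every finitely generated module is reflexive, and then checks $(X^\ast)^\ast \simeq X$ by reading off both $X^\ast$ and $(X^\ast)^\ast$ from the totally acyclic complex $F_X^\bt$. Your route is more elementary: from $\HH^0(F_X^\bt)=0$ you extract an embedding $X \hookrightarrow F_X^1$, conclude that every injective right module is projective, and finish by duality (implicitly using that left and right self-injectivity coincide for finite dimensional algebras). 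Both arguments exploit the same input, namely vanishing of $\HH^0(F_X^\bt)$, but yours avoids the external reflexivity criterion at the cost of a small duality step; the paper's version has the minor advantage of staying entirely on one side and yielding reflexivity of all modules as a byproduct.
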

\begin{proof} 
It was shown in \cref{Lem:F-TriangulatedFunctor} that condition (2) holds if $A$ is self-injective.
The implication from (2) to (3) holds by \cref{Thm:Triangulated-Hull-of-L}.
Since $\HP A$ is a triangulated subcategory of $\KKp A$, condition (3) implies condition (4).
By \cref{Lem:KKptac_LargestTriangSubcat}, the implication (4) $\Rightarrow$ (5) holds as well.

\textit{We verify the implication $(5) \Rightarrow (1)$}.
An algebra $A$ is self-injective if and only if every finitely generated module is reflexive; 
cf.\cite[IV. Proposition 3.4]{AuslanderReitenSmalo}.
Let $X \in \rmod A$. 
We show that $X$ is reflexive, that is $(X^\ast)^\ast \simeq X$. 
We have 
\[
X^\ast = \left(\HH^0(\tau_{\leqslant 0} F_X^\bt)\right)^\ast \simeq \HH_0(\tleq F_X^{\bt,\ast}) \simeq \HH_1(\tau_{\geqslant 1} F_X^{\bt,\ast})
\]
since $(-)^\ast = \Hom_A(-,A)$ is left exact and $F_X^\bt \in \LL_A$.
\[
\begin{tikzcd}[column sep = .2cm, row sep = .4cm]
\cdots \ar[rr] & & F_2^\ast \ar[rr] & & F_1^\ast \ar[rr] \ar[dr, twoheadrightarrow] &                                & F_0^\ast \ar[rr] & & F_{-1}^\ast \ar[rr] & &  \cdots \\
               & &                  & &                                             & X^\ast \ar[ur, hookrightarrow] &                  & &                     & &
\end{tikzcd}
\]
Using that $\HH^0(F_X^\bt) = 0$ since $F_X^\bt \in \LL_A = \KKptac A$, we similarly obtain 
\[
(X^\ast)^\ast = \left(\HH_1(\tau_{\geqslant 1} F_X^{\bt,\ast})\right)^\ast \simeq \HH^1(\tau_{\geqslant 1} F_X^\bt) \simeq \HH^0(\tau_{\leqslant 0} F_X^\bt) = X.
\]
In conclusion, all conditions are equivalent. It was shown in \cref{Lem:F-TriangulatedFunctor} that 
$\FF : \stmod A \to \LL_A$ is an equivalence of triangulated categories if $A$ is self-injective.
Furthermore, in this case we have $\stp A = \PP_A = \proj A$ so that $\Hstp A = \HP A$.
\end{proof}

\section{Stable equivalences of Morita type}
\label{Sec:StEquivMoritaType}

In this section we aim to show that a stable equivalence of Morita type $\stmod A \to \stmod B$ induces equivalences 
between $\HP A \to \HP B$ and $\Hstp A \to \Hstp B$.

We recall the definition of stable equivalences of Morita type and begin by collecting some technical properties.
\begin{Definition}[Brou\'e] \label{Def:StEquivMoritaType}
Let ${}_A M_B$ and ${}_B N_A$ be bimodules such that ${}_A M$, $M_B$, ${}_B N$ and $N_A$ are projective.
We say that $M$ and $N$ induce a \textit{stable equivalence of Morita type} if 
\[ 
{}_A M \otimes_B N_A \simeq A \oplus P \text{ and } {}_B N \otimes_A M_B \simeq A \oplus Q
\]
as bimodules such that ${}_A P_A$ and ${}_B Q_B$ are projective bimodules.
\end{Definition}

\begin{Lemma} \label{Lem:NatIsom_BimodulesMoritaType}
Suppose ${}_A M_B$ and ${}_B N_A$ are bimodules that induce a stable equivalence of Morita type 
such that $M$ and $N$ do not have any non-zero projective bimodule as direct summand.

The following holds for $X \in \rmod A$.
\begin{itemize}
\item[(1)] $X \otimes_A M$ is projective-injective as a $B$-module if $X \in \PP_A$.

\item[(2)] $X \otimes_A M$ is strongly projective-injective as a $B$-module if $X \in \stp A$.
\end{itemize}
\end{Lemma}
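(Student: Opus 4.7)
For part (1), I would handle projectivity and injectivity of $X \otimes_A M$ separately. Projectivity is immediate: since $X \in \proj A$, it is a direct summand of $A^n$, and tensoring gives $X \otimes_A M$ as a summand of $A^n \otimes_A M \simeq M^n$, which is projective as a right $B$-module because $M_B$ is projective by definition of a stable equivalence of Morita type.

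For injectivity, I would use that an injective module $X$ is a summand of $DA^n$, so that $X \otimes_A M$ is a summand of $(DA \otimes_A M)^n$, and reduce to showing $DA \otimes_A M$ is injective as a right $B$-module. The crux is a bimodule comparison
\[
DA \otimes_A M \;\simeq\; M \otimes_B DB \pmod{\text{projective }(A,B)\text{-bimodule summands}},
\]
which one derives from the defining relations $M \otimes_B N \simeq A \oplus P$ and $N \otimes_A M \simeq B \oplus Q$ together with the hypothesis that $M, N$ have no projective bimodule summands (and the exactness of $D$ and the fact that $D(A)$ is canonically the Nakayama bimodule of $A$). Granting this, $M \otimes_B DB$ is naturally isomorphic to $\nu_B(M_B)$; since $M_B$ is projective, $\nu_B(M_B)$ lies in $\inj B$, so $DA \otimes_A M$ is injective up to a projective $(A,B)$-bimodule direct summand. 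Projective $(A,B)$-bimodules are of the form $Ae \otimes_k fB$ (for idempotents $e,f$), which are also projective-injective as right $B$-modules, so this correction is harmless. Combined with the projectivity obtained above, we conclude $X \otimes_A M \in \PP_B$.

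For part (2), I would argue by induction on $k \geq 0$ that $\nu_B^{\,k}(X \otimes_A M) \simeq \nu_A^{\,k}(X) \otimes_A M$ whenever $X \in \stp A$. The case $k=0$ is trivial, and since $\stp A$ is preserved by $\nu_A$, every iterate $\nu_A^{\,k}(X)$ lies in $\stp A \subseteq \proj A$. Part (1) (or, more directly, its projectivity half) then shows $\nu_A^{\,k}(X) \otimes_A M$ is projective as a right $B$-module, so that $\nu_B$ on it is computed by $- \otimes_B DB$. The inductive step then reads
\[
\nu_B^{\,k+1}(X \otimes_A M) \simeq \nu_A^{\,k}(X) \otimes_A M \otimes_B DB \simeq \nu_A^{\,k}(X) \otimes_A DA \otimes_A M \simeq \nu_A^{\,k+1}(X) \otimes_A M,
\]
where the middle isomorphism again uses the bimodule comparison $M \otimes_B DB \simeq DA \otimes_A M$ modulo projective bimodule summands (and tensoring a projective bimodule with the projective module $\nu_A^{\,k}(X)$ still gives a projective right $B$-module, which is absorbed). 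Since $\nu_A^{\,k+1}(X)$ is projective, so is $\nu_A^{\,k+1}(X) \otimes_A M$, completing the induction and showing $X \otimes_A M \in \stp B$.

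\textbf{Main obstacle.} The real content lies in the bimodule identification $DA \otimes_A M \simeq M \otimes_B DB$ up to projective bimodule summands; once this is in place, both parts reduce to bookkeeping with the projectivity/injectivity characterizations and the formula $\nu(-) \simeq - \otimes DA$ on projectives. I would expect to establish this identification by combining $M \otimes_B N \simeq A \oplus P$ with duality and the standard bimodule/adjointness manipulations arising from $M$ and $N$ being projective on both sides, following the familiar pattern in the theory of stable equivalences of Morita type.
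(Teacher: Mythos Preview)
Your overall strategy is workable but takes a more circuitous route than the paper, and it contains one genuine error.

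\textbf{Comparison with the paper.} For part~(1) the paper does not touch the bimodule identity $DA\otimes_A M \simeq M\otimes_B DB$ at all. Instead it invokes the adjunction: under the given hypotheses one has $-\otimes_B N \dashv -\otimes_A M$ (this is \cite[Lemma 4.1]{ChenPanXi_MoritaType}), so $\Hom_B(-,X\otimes_A M)\simeq \Hom_A(-\otimes_B N,X)$, which is exact whenever $X$ is injective because $-\otimes_B N$ is exact. Injectivity of $X\otimes_A M$ is then immediate. For part~(2) the paper simply cites \cite[Lemma 3.3]{DugasMartinezVilla_MoritaType} for the isomorphism $\nu_B^{\,k}(X\otimes_A M)\simeq \nu_A^{\,k}(X)\otimes_A M$; this is exactly the identity you are trying to establish by hand via $M\otimes_B DB\simeq DA\otimes_A M$. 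So your approach effectively re-derives that lemma, whereas the paper uses it as a black box.

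\textbf{The error.} Your assertion that ``projective $(A,B)$-bimodules are of the form $Ae\otimes_k fB$, which are also projective-injective as right $B$-modules'' is false: as a right $B$-module such a bimodule is a direct sum of copies of $fB$, which is projective but has no reason to be injective unless $B$ is self-injective. Thus your ``harmless correction'' argument in part~(1) does not go through as stated, and the analogous absorption step in the induction of part~(2) would also fail, since applying $\nu_B$ to a merely projective summand need not return a projective module.

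\textbf{The fix.} Under the hypothesis that $M$ and $N$ have no non-zero projective bimodule summands, the isomorphism $DA\otimes_A M \simeq M\otimes_B DB$ in fact holds on the nose as $(A,B)$-bimodules (this is the content of \cite[Lemma 3.3]{DugasMartinezVilla_MoritaType}, which in turn rests on $N\simeq \Hom_B(M,B)\simeq DM$). So your hedging ``modulo projective bimodule summands'' is unnecessary, and once you drop it the erroneous step disappears and your argument becomes correct. Still, the adjunction route for part~(1) is both shorter and avoids the issue entirely.
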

\begin{proof}
\textit{Ad (1).} 
We use that $- \otimes_A M$ is right adjoint to $- \otimes_B N$; cf.\ \cite[Lemma 4.1]{ChenPanXi_MoritaType}.

If $X \in \rmod A$ is injective, $\Hom_B(-,X\otimes_A M) \simeq \Hom_A(- \otimes_B N, X)$ is an exact functor
since $- \otimes_B N$ is exact. Thus, $X\otimes_A M \in \inj B$.

If $X \in \proj A$, then $X \otimes_A M$ is a projective $B$-module since $M_B \in \proj B$. 
Therefore, we have $X \otimes_A M \in \PP_B$ if $X \in \PP_A$.

\textit{Ad (2).} Let $X \in \stp A$.
Using \cite[Lemma 4.1]{ChenPanXi_MoritaType} again, we have a natural isomorphism 
$\nu_B^k(X \otimes_A M_B) \simeq \nu_A^k(X) \otimes_A M_B$ as shown in \cite[Lemma 3.3]{DugasMartinezVilla_MoritaType}
so that $\nu_B^k(X \otimes_A M_B)$ is a projective $B$-module
for $k \in \ZZ$. Thus, $X \otimes_A M_B \in \stp B$.
\end{proof}

For part (3) of the following lemma, recall that $\stp A \subseteq \PP_A$.
\begin{Lemma} \label{Lem:MoritaType-IsoOnPerpP} 
Suppose ${}_A M_B$ and ${}_B N_A$ are bimodules that induce a stable equivalence of Morita type 
such that $M$ and $N$ do not have any non-zero projective bimodule as direct summand.
The following holds.
\begin{itemize}
\item[(1)] $X \otimes_A M \in \Pperp_B$ if $X \in \Pperp_A$.

\item[(2)] $X \otimes_A M \in \stperp A$ if $X \in \stperp A$.

\item[(3)] $X \otimes_A M \otimes_B N \simeq X$ if $X \in \stperp A$.
\end{itemize}
\end{Lemma}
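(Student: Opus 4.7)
The plan is to prove (1) and (2) via a common adjunction argument, and then deduce (3) by combining (2) with a structural feature of the projective bimodule $P$.

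For (1) and (2), the key input is that for a stable equivalence of Morita type the functors $-\otimes_A M$ and $-\otimes_B N$ form a Frobenius pair (which follows from \cite[Lemma 4.1]{ChenPanXi_MoritaType} together with the natural identification of $N$ with $\Hom_B(M,B)$ up to projective bimodule summands). In particular, there is a natural isomorphism
\[
\Hom_B(X \otimes_A M,\, W)\;\simeq\;\Hom_A(X,\, W \otimes_B N)
\]
for $X \in \rmod A$ and $W \in \rmod B$. Taking $W \in \PP_B$ (respectively $W \in \stp B$), the symmetric version of \cref{Lem:NatIsom_BimodulesMoritaType}.(1) (respectively (2)), applied to $N$ in place of $M$, yields $W \otimes_B N \in \PP_A$ (respectively $\stp A$). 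The hypothesis on $X$ then kills the right hand side, giving (1) and (2).

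For (3), the bimodule isomorphism $M \otimes_B N \simeq A \oplus P$ directly gives
\[
X \otimes_A M \otimes_B N\;\simeq\;X \oplus (X \otimes_A P),
\]
so it suffices to prove $X \otimes_A P = 0$. Combining (2) with its analogue for $N$ in place of $M$ shows that $X \otimes_A M \otimes_B N \in \stperp A$, and hence its direct summand $X \otimes_A P$ also lies in $\stperp A$. On the other hand, since $M$ and $N$ have no projective bimodule summands, the indecomposable bimodule summands of $P$ are of the form $Ae \otimes_k fA$ with $fA \in \stp A$ (a structural feature of stable equivalences of Morita type; see \cite{DugasMartinezVilla_MoritaType}), so $X \otimes_A (Ae \otimes_k fA) \simeq Xe \otimes_k fA$ is a direct sum of copies of $fA$, placing $X \otimes_A P$ inside $\stp A$. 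Since any $Y \in \stp A \cap \stperp A$ satisfies $\Hom_A(Y, Y) = 0$ and is therefore zero, $X \otimes_A P = 0$, which yields (3).

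The main obstacle I anticipate is not the adjunction step, which is routine, but the input required for (3): the fact that $P$ is merely a projective bimodule is insufficient to force $X \otimes_A P = 0$, since in general this tensor product is a nonzero projective right $A$-module. Placing $X \otimes_A P$ inside $\stp A$ depends on the finer description of the indecomposable bimodule summands of $P$ recalled above, which is a genuinely non-trivial feature of stable equivalences of Morita type.
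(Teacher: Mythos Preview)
Your argument is correct and follows essentially the same route as the paper: the adjunction from \cite[Lemma~4.1]{ChenPanXi_MoritaType} combined with \cref{Lem:NatIsom_BimodulesMoritaType} handles (1) and (2), and for (3) both you and the paper decompose $X\otimes_A M\otimes_B N \simeq X \oplus (X\otimes_A P)$, place $X\otimes_A P$ in $\stperp A$ via (2), and then invoke an external result to place $X\otimes_A P$ in $\stp A$. The only difference is bibliographic: the paper cites \cite[Lemma~3.1.(2,3)]{HuXi_DerivedMoritaTypeII} directly for $X\otimes_A P \in \stp A$, whereas you unpack the structure of $P$ and attribute the needed fact about its summands to \cite{DugasMartinezVilla_MoritaType}; you may want to double-check that the precise statement you use is actually in that reference rather than in \cite{HuXi_DerivedMoritaTypeII}.
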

\begin{proof}
\textit{Ad (1) and (2).} 
We use that $- \otimes_A M$ is left adjoint to $- \otimes_B N$ by \cite[Lemma 4.1]{ChenPanXi_MoritaType}.
For $Z \in \PP_B$ or $Z \in \stp B$ we have
\[
\Hom_B(X \otimes_A M, Z) \simeq \Hom_A(X, Z \otimes_B N ) = 0
\] 
since $Z\otimes_B N \in \PP_A$ or $Z \otimes_B N \in \stp A$ respectively by \cref{Lem:NatIsom_BimodulesMoritaType}.

\textit{Ad (3).} We have
\[
X \otimes_A M \otimes_B N \simeq X \otimes_A (A_A \oplus P_A) \simeq X \oplus (X \otimes_A P_A).
\]
By \cite[Lemma 3.1.(2,3)]{HuXi_DerivedMoritaTypeII} we know that $X \otimes_A P_A \in \stp A$.
On the other hand, by part (2) we have $X \otimes_A M \in \stperp B$.
Similarly, we also have $Y \otimes_B N \in \stperp A$ for all $Y \in \stperp B$.
Together we obtain $X \otimes_A M \otimes_B N \in \stperp A$.
Thus, $X \otimes_A P_A$ must be zero and $X \otimes_A M \otimes_B N \simeq X$.
\end{proof}

\begin{Lemma} \label{Lem:Homology-ExactFunctor}
Suppose ${}_A M_B$ and ${}_B N_A$ are bimodules that induce a stable equivalence of Morita type 
such that $M$ and $N$ do not have any non-zero projective bimodule as direct summand.

The following holds for a complex $F^\bt \in \KKp A$ and all $k \in \ZZ$.
\begin{itemize}
\item[(1)] $\HH^k(F^\bt \otimes_A M) = 0$ if $\HH^k(F^\bt) = 0$.
\item[(2)] $\HH_k\big((F^\bt \otimes_A M)^\ast\big) = 0$ if $\HH_k(F_\bt^\ast) = 0$.
\end{itemize}
\end{Lemma}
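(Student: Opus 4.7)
The strategy exploits the projectivity of ${}_A M$ and $N_A$: each of these gives rise to an exact functor (tensoring on the right with $M$, respectively on the left with $N$) that commutes with (co)homology. Part (1) follows directly from this principle. Part (2) requires first rewriting the dualized complex in a form to which the principle applies, using the adjunction already invoked in \cref{Lem:MoritaType-IsoOnPerpP}.

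For (1), since ${}_A M$ is projective, the functor $- \otimes_A M \colon \rmod A \to \rmod B$ is exact. Applied componentwise to $F^\bt$ and commuting with the formation of kernels and images, this yields a natural isomorphism $\HH^k(F^\bt \otimes_A M) \simeq \HH^k(F^\bt) \otimes_A M$, which vanishes by hypothesis.

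For (2), the plan is to establish a natural isomorphism of complexes of left $B$-modules $(F^\bt \otimes_A M)^\ast \simeq N \otimes_A F_\bt^\ast$. Using the adjunction $- \otimes_A M \dashv - \otimes_B N$ (as in the proof of \cref{Lem:MoritaType-IsoOnPerpP}), one computes, by taking $Y = B$,
\[
(F^k \otimes_A M)^\ast = \Hom_B(F^k \otimes_A M, B) \simeq \Hom_A(F^k, B \otimes_B N) = \Hom_A(F^k, N).
\]
Since $F^k \in \proj A$ is finitely generated, the standard tensor-hom identity provides a natural isomorphism $N \otimes_A F^{k,\ast} \simeq \Hom_A(F^k, N)$, verified on a representative $F^k = eA$ (where both sides equal $Ne$) and extended additively. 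Assembling componentwise yields the claimed isomorphism of chain complexes. Since $N_A$ is projective, the functor $N \otimes_A -$ from left $A$-modules to left $B$-modules is exact and therefore commutes with homology:
\[
\HH_k\big((F^\bt \otimes_A M)^\ast\big) \simeq \HH_k(N \otimes_A F_\bt^\ast) \simeq N \otimes_A \HH_k(F_\bt^\ast) = 0.
\]

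The main subtlety is tracking the left $B$-module structures through the chain of natural isomorphisms: on the left-hand side this structure originates in the codomain $B$ of $\Hom_B(-,B)$, while on the right-hand side it arises from the left $B$-action on the bimodule $N$. Verifying that the intermediate identifications are $B$-linear (rather than mere abelian-group isomorphisms) is the main bookkeeping cost of the argument.
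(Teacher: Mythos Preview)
Your proof is correct and follows essentially the same approach as the paper: part (1) uses exactness of $-\otimes_A M$, and part (2) establishes the natural isomorphism $(F^\bt \otimes_A M)^\ast \simeq N \otimes_A F_\bt^\ast$ and then applies exactness of $N \otimes_A -$. The only difference is that the paper cites the literature (\cite[Lemma 4.1]{ChenPanXi_MoritaType} and \cite[Lemma 3.3]{DugasMartinezVilla_MoritaType}) for the key isomorphism in (2), whereas you derive it directly from the adjunction and the tensor-hom identity for finitely generated projectives.
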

\begin{proof}
Using that $-\otimes_A M$ is an exact functor, we have $\HH^k(F^\bt \otimes_A M) \simeq \HH^k(F^\bt) \otimes_A M$.

Because of \cite[Lemma 4.1]{ChenPanXi_MoritaType}, we have a natural isomorphism 
$(F^\bt \otimes_A M)^\ast \simeq N \otimes_A F_\bt^\ast$ as shown in \cite[Lemma 3.3]{DugasMartinezVilla_MoritaType}.
Using that $N \otimes_A -$ is an exact functor, we obtain
\belowdisplayskip=-13pt
\[
\HH_k\big((F^\bt \otimes_A M)^\ast\big) \simeq \HH_k(N \otimes_A F_\bt^\ast) \simeq N \otimes_A \HH_k(F_\bt^\ast).
\]
\end{proof}

In general, a stable equivalence of Morita type does not induce an equivalence between $\Hp A$ and $\Hp B$.
Similarly, it does not induce an equivalence between $\KKp A$ and $\KKp B$.
However, we have the following main result of this section.
\begin{Theorem} \label{Thm:InducedEquivalenceOnLAndH}
Suppose ${}_A M_B$ and ${}_B N_A$ are bimodules that induce a stable equivalence of Morita type 
such that $M$ and $N$ do not have any non-zero projective bimodule as direct summand.
\begin{itemize}
\item[(1)] Applying $-\otimes_A M$ componentwise induces an equivalence of categories $\LL_A \rightarrow \LL_B$.
           If $A$ and $B$ are self-injective, this is an equivalence of triangulated categories.
           
\item[(2)] Applying $-\otimes_A M$ componentwise induces an equivalence of triangulated categories 
           \[ \HP A \rightarrow \HP B. \]

\item[(3)] Applying $-\otimes_A M$ componentwise induces an equivalence of triangulated categories 
           \[ \Hstp A \rightarrow \Hstp B. \]
\end{itemize}
\end{Theorem}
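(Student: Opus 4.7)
My plan is to leverage the universal characterizations in \cref{Thm:Triangulated-Hull-of-L} and \cref{Thm:Triangulated-Hull-of-L_Closed-under-nu}, together with the observation that componentwise tensoring with $M$ yields a triangulated functor $\KKp A \to \KKp B$. The functor $-\otimes_A M \colon \rmod A \to \rmod B$ is exact and sends projectives to projectives (since ${}_AM$ is projective and $M_B$ is projective), so it extends componentwise and automatically commutes with mapping cones; the analogous statements hold for $-\otimes_B N$.

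For part (1), I would first verify that $-\otimes_A M$ restricts to a functor $\LL_A \to \LL_B$ using \cref{Lem:Homology-ExactFunctor}, with $-\otimes_B N$ playing the role of quasi-inverse. The key identity to establish is $F^\bt \otimes_A M \otimes_B N \simeq F^\bt$ in $\KKp A$ for $F^\bt \in \LL_A$. The bimodule isomorphism $M \otimes_B N \simeq A \oplus P$ gives
\[
F^\bt \otimes_A M \otimes_B N \simeq F^\bt \oplus (F^\bt \otimes_A P),
\]
so it suffices to show $F^\bt \otimes_A P \simeq 0$ in $\KKp A$. This complex has all components in $\stp A$ (by the HuXi reference cited in the proof of \cref{Lem:MoritaType-IsoOnPerpP}.(3)), and its cohomology is $\HH^k(F^\bt) \otimes_A P$ by exactness of $-\otimes_A P$. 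Since $\LL_A \subseteq \Hstp A$ we have $\HH^k(F^\bt) \in \stperp A$, and then the reasoning of \cref{Lem:MoritaType-IsoOnPerpP}.(3) places $\HH^k(F^\bt) \otimes_A P$ in $\stp A \cap \stperp A = 0$. Thus $F^\bt \otimes_A P$ is a left-bounded acyclic complex of projective $A$-modules, hence null-homotopic. The triangulated statement under the self-injective hypothesis is immediate, since $\LL_A$ and $\LL_B$ are then triangulated subcategories by \cref{Lem:F-TriangulatedFunctor}.

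For part (2), I would invoke \cref{Thm:Triangulated-Hull-of-L} directly. The preimage
\[
\TT := \{ F^\bt \in \KKp A : F^\bt \otimes_A M \in \HP B \}
\]
is a triangulated subcategory of $\KKp A$ closed under isomorphisms, and it contains $\LL_A$ by part (1) together with $\LL_B \subseteq \HP B$. Minimality of $\HP A$ forces $\HP A \subseteq \TT$, so $-\otimes_A M$ restricts to $\HP A \to \HP B$; the symmetric construction yields the restriction of $-\otimes_B N$ to $\HP B \to \HP A$. The equivalence then follows from the null-homotopy of $F^\bt \otimes_A P$ established in part (1), which still applies because $\HP A \subseteq \Hstp A$. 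For part (3), the same scheme works with $\TT := \{F^\bt \in \KKp A : F^\bt \otimes_A M \in \Hstp B\}$ and \cref{Thm:Triangulated-Hull-of-L_Closed-under-nu}, provided $\TT$ is additionally closed under $\nu_{\mathcal{K}}$. This reduces to a natural isomorphism $\nu_B(F^\bt \otimes_A M) \simeq \nu_A(F^\bt) \otimes_A M$ for $F^\bt \in \KKpbsp A$, which follows componentwise from the identity $(F^\bt \otimes_A M)^\ast \simeq N \otimes_A F_\bt^\ast$ already used in \cref{Lem:Homology-ExactFunctor}; the same identity shows that $-\otimes_A M$ sends $\KKpbsp A$ into $\KKpbsp B$. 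Combined with closure of $\Hstp B$ under $\nu_{\mathcal{K}}$ from \cref{Thm:Triangulated-Hull-of-L_Closed-under-nu}, this yields closure of $\TT$ under $\nu_{\mathcal{K}}$.

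The main technical obstacle I anticipate is the null-homotopy argument for $F^\bt \otimes_A P$, which depends on identifying $\HH^k(F^\bt) \otimes_A P$ as lying simultaneously in $\stp A$ and $\stperp A$ for all $F^\bt \in \Hstp A$. Once this observation is in place, the universal properties do the heavy lifting in parts (2) and (3) essentially for free, and the remaining step (commutation of $-\otimes_A M$ with $\nu_{\mathcal{K}}$) is bookkeeping based on the identity $(F^\bt \otimes_A M)^\ast \simeq N \otimes_A F_\bt^\ast$.
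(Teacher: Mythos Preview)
Your overall strategy for parts (2) and (3)---pulling back along $-\otimes_A M$ and invoking the minimality of $\HP A$ and $\Hstp A$ from \cref{Thm:Triangulated-Hull-of-L} and \cref{Thm:Triangulated-Hull-of-L_Closed-under-nu}---is elegant and genuinely different from the paper's approach, which instead verifies directly via \cref{Lem:H-iff-Complex_in-perp} and \cref{Lem:MoritaType-IsoOnPerpP} that the functor lands in the right place and then proves $F^\bt \otimes_A M \otimes_B N \simeq F^\bt$ by an induction on the number of nonzero cohomology positions (using the triangles of \cref{Lem:RemovingCohomologyOfF}). Your route would indeed make the universal properties do the heavy lifting, and the commutation with $\nu_{\mathcal{K}}$ is handled correctly.

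However, there is a genuine gap at the one technical point you flagged yourself: the claim that $F^\bt \otimes_A P$ is \emph{left-bounded} is false. Complexes in $\LL_A$ (and a fortiori in $\HP A$, $\Hstp A$) are in general unbounded on both sides---the construction of $F_X^\bt$ splices an infinite projective resolution of $X$ to the left with an infinite projective resolution of $\Tr X$ (dualized) to the right. For unbounded complexes, acyclicity of a complex of projectives (or even of projective-injectives) does \emph{not} imply null-homotopy: totally acyclic complexes over a self-injective algebra are the standard counterexamples. Thus your argument for $F^\bt \otimes_A P \simeq 0$ breaks at exactly this step, and since all three parts rely on it, the whole proof currently has a hole. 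One way to repair part (1) is to observe that $F^\bt \otimes_A P \in \LL_A$ (it is a summand of $F^\bt \otimes_A M \otimes_B N \in \LL_A$) and that $\HH^0(\tau_{\leqslant 0}(F^\bt \otimes_A P)) = \HH^0(\tau_{\leqslant 0} F^\bt) \otimes_A P$ is projective (as $P$ is a summand of some $(A\otimes_k A)^n$), so its image under $\FF^{-1}$ is zero in $\stmod A$; alternatively, the paper simply uses the commuting square with $\stmod A \to \stmod B$. Once part (1) is established, you can salvage parts (2) and (3) by applying the universal properties not to the preimage of $\HP B$ but to the kernel $\{F^\bt : F^\bt \otimes_A P \simeq 0\}$, which is a triangulated subcategory closed under isomorphisms (and under $\nu_{\mathcal{K}}$, via the same commutation identity you cite), contains $\LL_A$, and therefore contains $\HP A$ and $\Hstp A$.
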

\begin{proof}
\textit{Ad (1).} 
Note that $- \otimes_A M$ induces a functor
$\KKp A \to \KKp B$ by componentwise application.
Now, \cref{Lem:Homology-ExactFunctor} shows that $-\otimes_A M$ induces a well-defined functor $\LL_A \rightarrow \LL_B$.
Consider the following diagram.
\[
\begin{tikzcd}[row sep=1cm, column sep=2cm]
\underline{\mathrm{mod}}\,A \ar[d, "\mathcal{F}"', "\sim" sloped] \ar[r, "\sim"', "- \otimes_A M"] & \underline{\mathrm{mod}}\,B \ar[d, "\sim"' sloped, "\mathcal{F}"] \\
\LL_A \ar[r, "- \otimes_A M"]& \LL_B
\end{tikzcd}
\]
Recall that the quasi-inverse of $\FF$ is given by $\HH^0(\tleq(-))$; cf.\ \cref{Thm:Equivalence_F}.
Let $F^\bt \in \LL_A$. Since $-\otimes_A M$ is exact, we have 
$ \HH^0\left(\tau_{\leqslant 0} \left(F^\bt \otimes_A M\right)\right)) \cong \HH^0\left(\tau_{\leqslant 0} F^\bt\right) \otimes_A M $ 
so that the diagram commutes.
This shows that $-\otimes_A M$ induces an equivalence of categories $\LL_A \rightarrow \LL_B$.

Furthermore, if $A$ and $B$ are self-injective, the equivalence $\FF$ is triangulated by \cref{Lem:F-TriangulatedFunctor}.
Thus, this diagram shows that the functor $-\otimes_A M$ induces an equivalence of triangulated categories $\LL_A \rightarrow \LL_B$.

\textit{Ad (2) and (3).}
By \cref{Lem:Homology-ExactFunctor}, the functor $-\otimes_A M$ induces a functor $\Hp A \to \Hp B$.
Since $-\otimes_A M$ is applied componentwise, this is a triangulated functor.

Suppose given $F^\bt \in \HP A$. We verify that $F^\bt \otimes_A M \in \KPperp B$.
By \cref{Lem:H-iff-Complex_in-perp} it suffices to show that $\HH^k(F^\bt \otimes_A M) \simeq \HH^k(F^\bt) \otimes_A M$
is an element of $\Pperp_B$ for $k \in \ZZ$.
However, this holds by \cref{Lem:MoritaType-IsoOnPerpP}.(1) since 
$\HH^k(F^\bt) \in \Pperp_A$ by \cref{Lem:H-iff-Complex_in-perp}.
In conclusion, $- \otimes_A M$ induces a functor $\HP A \to \HP B$.

Similarly, $- \otimes_A M$ induces a functor $\Hstp A \to \Hstp B$.
It remains to show that these are equivalences of triangulated categories.
Recall that $\HP A$ is contained in $\Hstp A$.

Let $F^\bt \in \Hstp A$ with $r \in \ZZ$ such that $\HH_{\geqslant r}(F_\bt^\ast) = 0$. 
We verify that we have a natural isomorphism $F^\bt \otimes_A M \otimes_B N \simeq F^\bt$ by induction on 
${N := \left|\left\lbrace j \in \ZZ_{< r} \left|\, \HH^j(F^\bt) \neq 0 \right\rbrace \right.\right|}$.
Since $F^\bt \in \Hp A$, we know that $\HH^\bt(F^\bt)$ is left bounded and therefore $N < \infty$.
We write $G^\bt := F^\bt \otimes_A M \otimes_B N$.

Let $N = 0$. Then $F^\bt[r] \in \LL_A$ and the assertion holds by part (1) since $-\otimes_A M$ commutes with the shift.

Let $N > 0$ and $k \in \ZZ_{< r}$ minimal such that $\HH^k(F^\bt) \neq 0$.
By \cref{Lem:RemovingCohomologyOfF}, we have a distinguished triangle
\[
P^\bt[-k] \to F^\bt \to C^\bt \to
\]
with $P^\bt$ a projective resolution of $\HH^k(F^\bt)$.
Moreover, $\HH^j(C^\bt) = 0$ for $j \leq k$ and $\tau_{\geqslant k} C^\bt = \tau_{\geqslant k} F^\bt$.

Applying $- \otimes_A M \otimes_B N$ to this triangle, we obtain a new distinguished triangle.
\[
P^\bt[-k] \otimes_A M \otimes_B N \to G^\bt \to C^\bt \otimes_A M \otimes_B N \to
\]
By \cref{Lem:MoritaType-IsoOnPerpP}.(3), we have $\HH^k(F^\bt) \otimes_A M \otimes_B N \simeq \HH^k(F^\bt)$ 
since $\HH^k(F^\bt) \in \stperp A$ by \cref{Lem:H-iff-Complex_in-perp}.
Hence, we have a natural isomorphism $P^\bt \otimes_A M \otimes_B N \simeq P^\bt$ in $\KKp A$ 
and obtain the following distinguished triangle.
\[
P^\bt[-k] \to G^\bt \to C^\bt \otimes_A M \otimes_B N \to
\]
By induction, we can assume that there is a natural isomorphism $ C^\bt \otimes_A M \otimes_B N \simeq C^\bt$.
This induces another distinguished triangle.
\[
\begin{tikzcd}
P^\bt[-k] \ar[r] \ar[d, equal] & G^\bt \ar[r] \ar[d, dashed] & C^\bt \ar[r] \ar[d, equal] & \; \\
P^\bt[-k] \ar[r]               & F^\bt \ar[r]                & C^\bt \ar[r]               & \;
\end{tikzcd}
\]
The induced morphism now yields a natural isomorphism $G^\bt = F^\bt \otimes_A M \otimes_B N \simeq F^\bt$.
\end{proof}

\textbf{Acknowledgements.} The results of this article are part of the author's PhD thesis \cite{Nitsche_PhD}.
The author would like to thank Steffen Koenig, Kiriko Kato and Yuming Liu for helpful comments and suggestions.

\end{document}